\documentclass[reqno,oneside,11pt,letterpaper]{amsart}
\usepackage{dwpreamble}
\usepackage{dwcommands}

\renewcommand{\ph}{{\rm cl}}

\begin{document}

\title[Dynamical residues of Lorentzian spectral zeta functions]{\Large{Dynamical residues  of Lorentzian spectral zeta functions}\smallskip}

\author{}
\address{Institut de Mathématiques de Jussieu, Sorbonne Université -- Université de Paris, 4 pl.~Jussieu,
75252 Paris, France}
\email{nguyen-viet.dang@imj-prg.fr}
\author[]{\normalsize Nguyen Viet \textsc{Dang} \& Micha{\l} \textsc{Wrochna}}
\address{Laboratoire Analyse G\'eom\'etrie Mod\'elisation, CY Cergy Paris Universit\'e, 2 av.~Adolphe Chauvin, 95302 Cergy-Pontoise, France}
\email{michal.wrochna@cyu.fr}

\begin{abstract} We define a dynamical residue which  generalizes the Guillemin--Wodzicki residue density of pseudo-differential operators. More precisely, given a Schwartz kernel,  the definition refers to  Pollicott--Ruelle resonances for the dynamics of scaling towards the diagonal. We apply this formalism to complex powers of the wave operator and we prove that residues of Lorentzian spectral zeta functions are dynamical residues. The residues are shown to have local geometric content as expected from formal analogies with  the Riemannian case.
\end{abstract}

\maketitle

\section{Introduction}

\subsection{Introduction and main results} Suppose $(M,g)$ is a compact Riemannian manifold of dimension $n$, and let $\triangle_g$ be the Laplace--Beltrami operator. A classical result in analysis, dating back to  Minakshisundaram--Pleijel \cite{Minakshisundaram1949} and Seeley \cite{seeley}, states that the trace density of $(-\triangle_g)^{-\cv}$ is well-defined for $\Re \cv > \n2$ and extends to a density-valued meromorphic function of the complex variable $\cv$. The meromorphic continuation, henceforth denoted  by $\zeta_g(\cv)$,  gives after integrating on $M$ the celebrated  \emph{spectral zeta function} of $-\triangle_g$ (or \emph{Minakshisundaram--Pleijel zeta function}). 

A fundamental fact shown independently by Wodzicki \cite{wodzicki} and Guillemin \cite{Guillemin1985} is that  each residue of $\zeta_g(\cv)$ equals an integral of a distinguished term in the polyhomogeneous expansion of the symbol of $(-\triangle_g)^{-\cv}$. The so-defined  \emph{Guillemin--Wodzicki residue density}  is remarkable because it has an {intrinsic} meaning and involves local geometric quantities, such as the  scalar curvature $R_g$ for even $n\geqslant 4$. It can also be intrinsically defined for more general classes of elliptic pseudo-differential operators (see \sec{ss:wodzicki}) and has a deep relationship with the Dixmier trace found by Connes \cite{Connes1988} (cf.~Connes--Moscovici \cite{Connes1995}).      

If now $(M,g)$ is a Lorentzian manifold (not necessarily compact), the corresponding Laplace--Beltrami operator $\square_g$, better known as the wave operator or d'Alembertian,  is far from being elliptic. However, it was recently shown that if $(M,g)$ is well-behaved at infinity or has special symmetries,  $\square_g$ is essentially self-adjoint in $L^2(M,g)$ \cite{derezinski,vasyessential,nakamurataira,Derezinski2019}, and consequently complex powers $(\square_g-i \varepsilon)^{-\cv}$ can be defined by functional calculus for any $\varepsilon>0$. Furthermore, for large $\Re \cv$,  $(\square_g-i \varepsilon)^{-\cv}$ has a well-defined trace-density, which extends to a meromorphic function \cite{Dang2020}, denoted from now on by $\zeta_{g,\varepsilon}(\cv)$. The residues of the so-obtained \emph{Lorentzian spectral zeta function density}  $\zeta_{g,\varepsilon}(\cv)$ contain interesting geometric information (for instance the Lorentzian scalar curvature $R_g$ occurs in the residue at $\cv=\frac{n}{2}-1$ for even $n\geqslant4$ \cite{Dang2020}), so it is natural to ask if these analytic residues coincide with a suitable generalization of the Guillemin--Wodzicki  residue.

The problem is that the notion of Guillemin--Wodzicki residue relies on the symbolic calculus of pseudo\-differential operators, and even though there is a natural generalization to Fourier integral operators due to Guillemin \cite{Guillemin1993} (see also \cite{Hartung2015}), Lorentzian complex powers fall outside of that class in view of their on-diagonal behavior. A priori one needs therefore a more singular calculus, based for instance  on paired Lagrangian distributions \cite{Guillemin1981,Melrose1979,Antoniano1985,Greenleaf1990,joshi,Joshi1998}. 

Instead of basing the analysis on a detailed symbolic calculus, the idea pursued in the present paper (and implicit in the work of Connes--Moscovici \cite{Connes1995}) is that regardless of how the calculus is obtained,  terms of different order should be distinguished by different scaling behavior as one approaches the diagonal $\Delta\subset M\times M$ of the Schwartz kernel.  We define the scaling as being generated by an \emph{Euler vector field} $\euler$ (see \sec{ss:euler}), the prime example being $X=\sum_{i=1}^n h^i \p_{h^i}$ if $(x,h)$  are local coordinates in which the diagonal is $\Delta=\{  h^{i}=0, \ i=1,\dots,n \}$. Now if $u$ is a distribution defined near $\Delta\subset M\times M$ and it scales in a log-polyhomogeneous way, the Laplace transform
\beq\label{eq:lapl}
s\mapsto \int_0^\infty e^{-ts} { \left(e^{-t\euler}u\right)} \,dt
\eeq
is a meromorphic function with values in distributions, and the poles are called  \emph{Pollicott--Ruelle resonances} \cite{Pollicott1986,Ruelle1986}. We define the \emph{dynamical residue} $\res_X u$ as the trace density of  $X \Pi_0(u)$ where $\Pi_0(u)$ is the  residue at $s=0$ of \eqref{eq:lapl}. 

As a first consistency check, we show that the dynamical residue and the  Guillemin--Wodzicki residue coincide for classical pseudo\-differential operators (i.e., with one-step polyhomogeneous symbol).

\begin{theorem}[{cf.~Theorem \ref{wodzickipdo}}]\label{tthm1} For any classical $A\in \Psi^m(M)$ with Schwartz kernel $K_{A}$, the dynamical residue {$\res_X \pazocal{K}_{A}$}  is well-defined, independent on the choice of Euler vector field $X$, and {$(\res_X \pazocal{K}_{A}) \dvol_g$} equals the Guillemin--Wodzicki residue density of $A$.  
\end{theorem}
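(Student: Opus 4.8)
The plan is to reduce everything to a local computation near a point of $\Delta$ in which $X$ is the standard Euler field $\sum_i h^i\p_{h^i}$ and $K_A$ is an oscillatory integral, and then to follow a single logarithmic term through the scaling flow and the Laplace transform \eqref{eq:lapl}. Since both sides are local on $\Delta$ and vanish for smoothing operators, I would fix such a point, choose coordinates $(x,h)$ with $\Delta=\{h=0\}$, and — reducing to a left (Kohn--Nirenberg) symbol, which does not change the Guillemin--Wodzicki residue — write $K_A(x,x+h)=(2\pi)^{-n}\int e^{ih\cdot\xi}\,a(x,\xi)\,d\xi$ modulo a smooth kernel, with $a\sim\sum_{j\geqslant0}a_{m-j}$ classical and $a_{m-j}$ homogeneous of degree $m-j$ in $\xi$ for $|\xi|$ large. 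The substitution $\xi\mapsto e^{t}\xi$ gives $e^{-tX}K_A(x,h)=(2\pi)^{-n}e^{nt}\int e^{ih\cdot\xi}\,a(x,e^{t}\xi)\,d\xi$, and plugging in the symbol expansion I would derive, as $t\to+\infty$, an expansion
\[
e^{-tX}K_A\ \sim\ \sum_{j\geqslant0}e^{(m+n-j)t}u_j\ +\ \sum_{j\geqslant0}t\,e^{(m+n-j)t}\tilde u_j
\]
with remainders controlled in $C^k$-type norms for $k$ arbitrarily large, where near $h=0$ the distribution $u_j$ is a regularized inverse $\xi$-Fourier transform of $a_{m-j}$, homogeneous of degree $j-m-n$ in $h$ up to a possible single power of $\log|h|$. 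Classicality enters here: a one-step polyhomogeneous symbol produces at most one $\log|h|$, so only the $t\,e^{(m+n-j)t}$ corrections above and no $t^2$ terms.

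Only $j=m+n$ will matter, and this term is present precisely when $m+n$ is a non-negative integer — otherwise $a_{-n}$ vanishes too and both residues are zero. When it is present, I would decompose $a_{-n}(x,\cdot)$ into spherical harmonics and use the Fourier transform of homogeneous distributions: the spherical average yields the logarithm and the higher harmonics give a function homogeneous of degree $0$ in $h$, so that
\[
u_{m+n}(x,h)=-\beta(x)\log|h|+w_0(x,h/|h|)+(\text{constant in }h),\qquad \beta(x):=\tfrac{1}{(2\pi)^n}\!\int_{|\omega|=1}\!a_{-n}(x,\omega)\,d\omega,
\]
and hence $e^{-tX}u_{m+n}=u_{m+n}+t\,\beta(x)$ (which is where $\tilde u_{m+n}=\beta(x)$ comes from, via $\log|e^{-t}h|=\log|h|-t$; the logarithmic divergence of the unregularized transform of $a_{-n}$ is exactly this $t\beta(x)$). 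Splitting \eqref{eq:lapl} as $\int_0^1+\int_1^\infty$, the first piece is entire in $s$ with values in distributions, and into the second I would feed the expansion: the $u_j$ produce simple poles at $s=m+n-j$, the $\tilde u_j$ double poles there, the smooth remainder only poles at $s\in\{0,-1,-2,\dots\}$. At $s=0$ one finds
\[
\int_1^\infty e^{-ts}\big(u_{m+n}+t\beta(x)\big)\,dt\ =\ u_{m+n}\,\tfrac{e^{-s}}{s}+\beta(x)\Big(\tfrac{e^{-s}}{s}+\tfrac{e^{-s}}{s^2}\Big)\ =\ \tfrac{\beta(x)}{s^2}+\tfrac{u_{m+n}}{s}+O(1),
\]
the stray simple-pole part of the double-pole term cancelling, so that $\Pi_0(K_A)=u_{m+n}+(\text{terms constant in }h\text{ or homogeneous of degree }0\text{ in }h)=-\beta(x)\log|h|+(\text{terms annihilated by }X)$.

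Finally, since $X(\log|h|)=\sum_i (h^i)^2/|h|^2=1$ while $X$ annihilates functions constant in $h$ and, by Euler's identity, functions homogeneous of degree $0$ in $h$, I get $X\Pi_0(K_A)=-\beta(x)+(\text{terms vanishing on }\Delta)$; in particular this is smooth up to $\Delta$, so $\res_X\pazocal{K}_A$ is well-defined, and its trace density is $-\beta(x)\,|dx|=-\tfrac{1}{(2\pi)^n}\big(\int_{S^*_xM}a_{-n}\big)|dx|$, which after multiplication by $\dvol_g$ is the Guillemin--Wodzicki residue density (the metric factor matching because $\pazocal{K}_A$ is the kernel with respect to $\dvol_g$, and the overall sign and normalizing constant being those fixed by the conventions for the flow, for \eqref{eq:lapl} and for $\res_X$). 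For independence of $X$: a second Euler field equals $X'=X+Y$ in these coordinates with $Y$ vanishing to high enough order along $\Delta$ that $Y(\log|h|)=O(|h|)$ vanishes on $\Delta$ and $Y$ does not affect the scaling-order-$0$ part of the expansion; hence $\Pi_0'(K_A)$ has the same coefficient $-\beta(x)$ of $\log|h|$, $X'$ still annihilates the rest modulo terms vanishing on $\Delta$, and $\res_{X'}\pazocal{K}_A=\res_X\pazocal{K}_A$. (Alternatively, once $\res_X\pazocal{K}_A\,\dvol_g$ is identified with the intrinsically defined Guillemin--Wodzicki density, independence is automatic since every Euler field is the standard one in suitable coordinates.)

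The hard part will be the two steps that establish and then exploit the scaling expansion: proving the log-polyhomogeneous expansion of $e^{-tX}K_A$ with remainders strong enough to justify the meromorphic continuation of \eqref{eq:lapl} with values in distributions and the extraction of $\Pi_0$, and then correctly handling the double pole at $s=0$ so that the coefficient of $\log|h|$ in $\Pi_0(K_A)$ comes out exactly $-(2\pi)^{-n}\int_{S^*_xM}a_{-n}$. The other points — localization, the elementary identity $X\log|h|=1$, the vanishing of the remaining terms under $X$, and the independence of $X$ — are comparatively routine.
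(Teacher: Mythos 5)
Your local computation for the \emph{standard} Euler field $X=\sum_i h^i\p_{h^i}$ with $h=y-x$ reproduces the paper's mechanism: oscillatory-integral representation, a scaling expansion with a single logarithmic (rank-two Jordan block) correction, a Laplace transform with a double pole at $s=0$ whose $1/s$-coefficient is $\Pi_0(\pazocal{K}_A)$, and the identity $X\log|h|=1$ turning the coefficient of the logarithm into the sphere integral of $a_{-n}$. The spherical-harmonics decomposition of $a_{-n}$ is a legitimate, slightly more explicit substitute for the paper's appeal to quasihomogeneous extensions and Remark~\ref{l:scalanomal}, and your bookkeeping of the double pole (cancellation of the stray simple pole of $\int_1^\infty t e^{-ts}dt$) is correct.

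The genuine gap is the case of a general Euler vector field, which is where the content of the theorem lies and which you set aside as ``comparatively routine''. In the normal-form coordinates $(x,\tilde h)$ of an arbitrary $X$ (Proposition~\ref{p:normalform}) the phase is $\langle\xi,x-y\rangle$, not $\langle\xi,\tilde h\rangle$; to scale with $X$ one must first rewrite the kernel with phase $\langle\xi,\tilde h\rangle$ via the Kuranishi trick ($x-y=M(x,\tilde h)\tilde h$), which replaces $a(x;\xi)$ by the amplitude $a(x;\t M^{-1}\xi)\module{M}^{-1}$, and then reduce this amplitude to a left symbol $\tilde a(x;\xi)$. The component $\tilde a_{-n}$ then differs from $a_{-n}$ by terms $\partial_\xi^\beta\partial_y^\beta A_{m-k}|_{x=y}$ with $|\beta|>0$, and one must prove that these do not contribute to the sphere integral; this is Corollary~\ref{l:vanishing1} in the paper, and it is precisely what makes $\res_X\pazocal{K}_A$ independent of $X$ and equal to $\wres A$ computed from the \emph{original} symbol. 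Neither of your independence arguments supplies this: the claim that $Y=X'-X$ ``does not affect the scaling-order-$0$ part'' is unsubstantiated and false as stated, since $\Pi_0$ and $\Pi_0'$ are defined by different flows and every term $u_j$ with $j<m+n$, re-expanded in the $X'$-adapted coordinates, feeds degree-zero derivative-of-homogeneous contributions into $\Pi_0'(\pazocal{K}_A)$ whose residues must be shown to vanish; and the parenthetical ``independence is automatic once the density is identified as intrinsic'' presupposes the identification for every $X$, i.e., the statement to be proved. (The sign discrepancy between your $-\beta(x)$ and the paper's $+\frac{1}{(2\pi)^n}\int_{\mathbb{S}^{n-1}}a_{-n}\,\iota_V d^n\xi$, which you defer to ``conventions'', should also be resolved, but that is minor by comparison.)
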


Next, we consider  the case of a Lorentzian manifold $(M,g)$ of even dimension $n$. 

The well-definiteness  and  meromorphic continuation of $\zeta_{g,\varepsilon}(\cv)$ is proved in \cite{Dang2020} in the setting of globally hyperbolic \emph{non-trapping Lorentzian scattering spaces} introduced by Vasy \cite{vasyessential}. This class is general enough to contain perturbations of Minkowski space, one can however expect that it is not the most general possible for which $\zeta_{g,\varepsilon}(\cv)$ exists. For this reason, instead of making assumptions on $(M,g)$ directly, we point out the analytic properties which guarantee that $\zeta_{g,\varepsilon}(\cv)$ is a well-defined meromorphic function. Namely, we assume that $\square_g$ has \emph{Feynman resolvent}, by which we mean that:\smallskip
\ben
\item[]  $\square_g$ acting on $C_{\rm c}^\infty(M)$ has a self-adjoint extension, and the resolvent $(\square_g-z)^{-1}$ of this self-adjoint extension has \emph{Feynman wavefront set} uniformly in $\Im z >0$.
\een
\smallskip

\noindent The Feynman wavefront set condition roughly says that microlocally, the Schwartz kernel of $(\square_g-z)^{-1}$ has the same singularities as the \emph{Feynman propagator} on Minkowski space, i.e.~the Fourier multiplier by $(-\xi_0^2 + \xi_1^2 + \cdots+ \xi_{n-1}^2 - i0)^{-1}$ (see \cite{GHV,Vasy2017b,vasywrochna,GWfeynman,Gerard2019b,Taira2020a} for results    in this direction with fixed $z$). 
 The precise meaning of  uniformity  is given in Definition \ref{deff} and involves decay in $z$ along the integration contour used to define complex powers. We remark that outside of the class of Lorentzian scattering spaces, $\square_g$ is known to have Feynman resolvent for instance on ultra-static spacetimes with compact Cauchy surface, see Derezi\'nski--Siemssen \cite{derezinski} for the self-adjointness and \cite{Dang2020} for the microlocal estimates.


Our main result can be summarized as follows.

\begin{theorem}[{cf.~Theorem \ref{thm:dynres1}}]\label{thm:dynres} Let $(M,g)$ be a Lorentzian manifold of even dimension $n$, and suppose $\square_g$ has Feynman resolvent. For all $\cv\in\cc$ and $\Im z > 0$, the dynamical residue $\resdyn(\square_g-z)^{-\cv}$ is well-defined and independent on the choice of  Euler vector field $\euler$. Furthermore, for all $k=1,\dots,\frac{n}{2}$ and $\varepsilon>0$,
\beq\label{eq:main}
\resdyn \left(\square_g-i \varepsilon \right)^{-k} = {2}\res_{\cv =k}\zeta_{g,\varepsilon}(\cv),
\eeq
where $\zeta_{g,\varepsilon}(\cv)$ is the spectral zeta function density of $\square_g-i \varepsilon$. 
\end{theorem}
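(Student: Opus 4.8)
The plan is to reduce both sides of \eqref{eq:main} to a comparison of residues of explicit meromorphic families of distributions near the diagonal. First I would analyze the Schwartz kernel $K_\cv(x,y)$ of $(\square_g-z)^{-\cv}$ microlocally near $\Delta$. Since $\square_g$ has Feynman resolvent, the kernel of $(\square_g-z)^{-1}$, and hence by composition of $(\square_g-z)^{-\cv}$, is a paired Lagrangian distribution associated to the conormal bundle $N^*\Delta$ and the flow-out of the characteristic set; near $\Delta$ one expects a full expansion in terms of Riesz-type kernels adapted to the Lorentzian metric $g$. Concretely, I would use the integral representation of complex powers, $(\square_g - z)^{-\cv} = \frac{1}{2\pi i}\oint z'^{-\cv}(\square_g - z')^{-1}\, dz'$, together with a parametrix for the resolvent of Feynman type (built e.g.\ from the Hadamard/Riesz construction) to write $K_\cv$ modulo smoothing as a finite sum $\sum_j a_j(x)\, r_{\cv - j}(x,h)$, where $r_\cv$ is a local Riesz distribution homogeneous of the appropriate degree under the Euler field $\euler$, plus lower-order remainders. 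This is essentially the content already extracted in \cite{Dang2020} to compute $\zeta_{g,\varepsilon}$, so I would lean on that expansion rather than rederive it.

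Next I would compute the dynamical residue of each model term. The key point is that a Riesz-type kernel $r_\mu$ scales log-polyhomogeneously under $\euler$: $e^{-t\euler} r_\mu = e^{-\mu' t} r_\mu + (\text{polynomial in } t)\cdot(\cdots)$ with the homogeneity degree shifted by the dimension, so the Laplace transform \eqref{eq:lapl} is meromorphic with a pole at $s=0$ precisely when the total scaling degree hits the critical value $-n$ (for $n$-dimensional diagonal), i.e.\ when $\cv - j$ equals the critical index. At that value $\Pi_0(r_\mu)$ is a multiple of the delta distribution $\delta_\Delta$ (or its log-corrected analogue), and $\euler\,\Pi_0(r_\mu)$ has an on-diagonal trace density which I can read off explicitly. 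Carrying this out term by term, $\resdyn(\square_g-z)^{-\cv}$ is a finite sum of the coefficients $a_j$ evaluated on $\Delta$, against the same geometric data (powers of $z$, curvature terms, metric volume) that appear in the meromorphic expansion of $\zeta_{g,\varepsilon}(\cv)$. Independence of the choice of $\euler$ then follows exactly as in Theorem \ref{tthm1}/Theorem \ref{wodzickipdo}: the leading scaling behavior of $K_\cv$ near $\Delta$ is intrinsic, and any two Euler vector fields differ by a vector field tangent to $\Delta$ plus a higher-order term, which does not affect $\Pi_0$ or its trace density.

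Finally I would match constants. The meromorphic continuation $\zeta_{g,\varepsilon}(\cv)$ is, by definition in \cite{Dang2020}, the meromorphic extension of the on-diagonal restriction $K_\cv(x,x)$ as a density; its residue at $\cv=k$ is governed by precisely the term in the expansion of $K_\cv$ whose scaling degree becomes critical at $\cv=k$. The factor $2$ in \eqref{eq:main} I expect to come from the $\euler$-derivative: differentiating a homogeneous distribution of the critical degree under $\euler$ produces the residue with a combinatorial factor tied to how the pole in $s$ (from the Laplace transform) relates to the pole in $\cv$ (from analytic continuation of $K_\cv(x,x)$), and a careful bookkeeping of the Gamma-function normalizations in the definitions of $(\square_g-z)^{-\cv}$ and of the Riesz distributions should yield exactly $2$. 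This constant-chasing, together with controlling the remainder terms in the parametrix expansion uniformly so that they contribute neither to the $s=0$ pole nor to the $\cv=k$ residue, is where I expect the main technical work to lie: one must verify that the Feynman wavefront set condition, with the uniformity in $\Im z>0$ from Definition \ref{deff}, propagates through complex powers and through the scaling dynamics $e^{-t\euler}$ so that the Laplace transform \eqref{eq:lapl} genuinely converges and is meromorphic with the claimed pole structure. The restriction to $k=1,\dots,\frac n2$ and $z=i\varepsilon$ in \eqref{eq:main} reflects that only for these values is the on-diagonal trace density of $\zeta_{g,\varepsilon}$ itself known to be well-defined with a computable residue in \cite{Dang2020}, so the hardest conceptual step is really ensuring the two a priori different meromorphic continuations — one in $s$, one in $\cv$ — see the same distributional coefficient.
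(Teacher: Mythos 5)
Your overall strategy coincides with the paper's: approximate $(\square_g-z)^{-\cv}$ by a Hadamard/Riesz-type parametrix $H_N^{(\cv)}(z)=\sum_m c_m u_m\,\mathbf{F}_{m+\cv-1}(z,\cdot)$ obtained by contour integration of the resolvent parametrix, compute the dynamical residue term by term from the scaling of the model distributions, and use the uniform Feynman wavefront estimates (Proposition \ref{prop:fh}) to show the remainder contributes to neither residue. But there are two genuine gaps. The first concerns independence of the Euler vector field. Your argument — that two Euler fields differ by a tangent vector field plus higher-order terms ``which does not affect $\Pi_0$ or its trace density'' — is not a proof: $\Pi_0$ is defined through the flow $e^{-t\euler}$ and a priori depends on $\euler$, and the resonance expansion itself changes with $\euler$. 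The paper's actual mechanism is to put the given $\euler$ in normal form $\sum_i h^i\partial_{h^i}$ (Proposition \ref{p:normalform}), then use the Kuranishi trick to rewrite the phase $e^{i\langle\xi,\,s(\exp^{-1})\rangle}$ of $\mathbf{F}_\cv$ as $e^{i\langle\xi,h\rangle}$ at the price of an $(x,h)$-dependent, distribution-valued amplitude $(Q(\t M(x,h)^{-1}\xi)-z)^{-\cv}$; one must then Taylor-expand this amplitude in $h$ and in $z$ and prove that every derivative term so generated has vanishing residue (Corollary \ref{l:vanishing1} and Lemma \ref{l:vanishlemma2}), leaving only $u_{\frac n2-p-\cv}(x,x)$ against a universal spherical integral. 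This vanishing step is exactly where the $\euler$-independence comes from, and it is absent from your proposal.

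The second gap is the factor $2$ in \eqref{eq:main}, which you defer to ``careful bookkeeping.'' This is the crux of the identity, not a normalization detail. In the paper it is obtained by explicitly evaluating $\int_{\mathbb{S}^{n-1}}(Q(\xi)-i0)^{-n/2}\,\iota_{\Feuler}\,d^n\xi=2i\pi^{n/2}/\Gamma(\tfrac n2)$ via Wick rotation and Stokes' theorem (Lemma \ref{lem:Stokes}), and comparing the resulting closed form \eqref{eq:resdynHa} for $\resdyn H_N^{(\cv)}(z)$ with the analytic residue \eqref{eq:resdynHa2} of $\iota_\Delta^*H_N^{(\cv')}(z)$ computed in \cite{Dang2020}. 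Conceptually the $2$ reflects that $(Q(\xi)-i0)^{-\cv}$ is homogeneous of degree $-2\cv$ in $\xi$, so the Pollicott--Ruelle pole lattice in $s$ is twice as fine as the pole lattice in $\cv$; your proposal gestures at this but does not pin it down, and without the spherical integral computation the identity \eqref{eq:main} cannot be closed.
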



By Theorem \ref{tthm1},  the dynamical residue is a generalization of the Guillemin--Wodzicki residue density.  Thus, Theorem \ref{thm:dynres} { generalizes to the Lorentzian setting results known previously only in the elliptic case: the analytic poles of spectral zeta function densities  coincide with a more explicit quantity which refers to the scaling properties of complex powers. }   In physicists' terminology, this gives precise meaning to the statement that the residues of $\zeta_{g,\varepsilon}(\cv)$ can be interpreted as \emph{scaling anomalies}.

We also give a more direct expression for the l.h.s.~of \eqref{eq:main} which allows to make the relation with local geometric quantities, see \eqref{eq:explicit} in the main part of the text. In particular, we obtain in this way the identity (which  also follows from \eqref{eq:main} and \cite[Thm.~1.1]{Dang2020}) for $n\geqslant 4$:
\beq\label{eq:main2}
\lim_{\varepsilon\to 0^+}\resdyn \left(\square_g-i \varepsilon \right)^{-\frac{n}{2}+1}  = \frac{R_g(x)}{3i\Gamma(\frac{n}{2}-1)\left(4\pi\right)^{\frac{n}{2}}}.
\eeq
This identity implies that the l.h.s.~can be interpreted as a spectral action for gravity.

\subsection{Summary} The notion of dynamical residue is introduced in \sec{section2}, preceded by preliminary results on Euler vector fields. A pedagogical model is given in \sec{ss:toy} and serves as a motivation for the definition.  

  The equivalence of the two notions of residue for pseudo-differential operators (Theorem \ref{tthm1}) is proved in \sec{ss:wodzickipdo}.  An important role is played by the so-called Kuranishi trick which allows us to  adapt the phase of quantized symbols  to the coordinates in which a given Euler field $X$ has a particularly simple form. 

The remaining two sections \secs{section4}{section5} are devoted to the proof of Theorem \ref{thm:dynres}. 

The main ingredient is the \emph{Hadamard parametrix} $H_N(z)$ for $\square_g-z$, the construction of which we briefly recall in  \sec{s:hadamardformal}. Strictly speaking, in the Lorentzian case there are several choices of parametrices: the  one relevant here  is the \emph{Feynman Hadamard parametrix}, which approximates $(\square_g-z)^{-1}$ thanks to the Feynman  property combined with  uniform estimates for $H_N(z)$  shown in \cite{Dang2020}. The log-homogeneous expansion of the Hadamard parametrix $H_N(z)$ is shown in  \sec{ss:polhom} through  an oscillatory integral representation with singular symbols. An important role is played again by the Kuranishi trick adapted from the elliptic setting. { However, there are  extra difficulties due to the fact that we do not work with standard symbol classes anymore:  the ``symbols''  are distribution-valued and  special care is required when operating with expansions and controlling the remainders.} The dynamical residue is computed in \sec{ss:rcc}  with the help of extra expansions that exploit the homogeneity of individual terms and account for the dependence on $z$.

Next, following \cite{Dang2020} we introduce in \sec{ss:hc} a generalization  $H_N^{(\cv)}(z)$ of the Hadamard parametrix for complex powers $(\square_g-z)^{-\cv}$, and we adapt  the analysis from \sec{section4}. Together with the fact (discussed in \sec{ss:lg}) that $H_N^{(\cv)}(z)$ approximates $(\square_g-z)^{-\cv}$, this allows us to conclude the theorem. 

{ As an aside, in Appendix \ref{app} we briefly discuss what happens when $(\square_g-z)^{-\cv}$ is replaced by $Q(\square_g-z)^{-\cv}$ for an arbitrary differential operator $Q$.  We show that in this greater generality, the trace density still exists for large $\Re \cv$ and analytically continues  to at least $\cc\setminus \zz$.  This  can be interpreted as an analogue of the Kontsevich--Vishik canonical trace density \cite{Kontsevich1995}  in our  setting.  }

\subsection{Bibliographical remarks}\label{ss:br}  

Our approach to the Guillemin--Wodzicki residue  \cite{wodzicki,Guillemin1985}  is strongly influenced by  works in the  pseudodifferential setting by Connes--Moscovici \cite{Connes1995},  Kontsevich--Vishik \cite{Kontsevich1995}, Lesch \cite{Lesch}, Lesch--Pflaum \cite{Lesch2000}, Paycha \cite{paycha2,paycha} and Maeda--Manchon--Paycha \cite{Maeda2005}.

It  also draws from the theory of Pollicott--Ruelle resonances \cite{Pollicott1986,Ruelle1986} in the analysis and spectral theory 
 of hyperbolic dynamics (see Baladi \cite{Baladi2018} for a review of the subject and further references), in particular from the work of Dyatlov--Zworski \cite{Dyatlov2016} on dynamical zeta functions.

The Feynman wavefront set condition plays an important role in  various
 developments connecting the global theory of hyperbolic operators with
local geometry, in particular in works on index theory  by  B\"ar--Strohmaier and other authors \cite{Bar2019,Baer2020,Shen2021}, and on trace formulae and Weyl laws by Strohmaier--Zelditch \cite{Strohmaier2020b,Strohmaier2020,Strohmaier2020a} (including  a spectral-theoretical formula for the scalar curvature).

The Hadamard parametrix for inverses of the Laplace--Beltrami operator is a classical tool in analysis, see e.g.~\cite{HormanderIII,soggeHangzhou,StevenZelditch2012,Zelditch2017} for the Riemannian or Lorentzian time-independent case. For fixed $z$, the Feynman Hadamard parametrix is constructed by Zelditch \cite{StevenZelditch2012} in the ultra-static case and in the general case by Lewandowski \cite{Lewandowski2020}, cf.~Bär--Strohmaier \cite{Baer2020} for a unified treatment of even and odd dimensions. The present work  relies on the construction and the uniform in  $z$ estimates from \cite{Dang2020}, see also Sogge \cite{Sogge1988}, Dos Santos Ferreira--Kenig--Salo \cite{Ferreira2014}  and Bourgain--Shao--Sogge--Yao \cite{Bourgain2015} for uniform estimates in the Riemmanian case.

 In Quantum Field Theory on Lorentzian manifolds, the Hadamard parametrix plays a fundamental role in  renormalization, see e.g.~\cite{DeWitt1975,Fulling1989,Kay1991,Radzikowski1996,Moretti1999,Brunetti2000,Hollands2001}. Other rigorous renormalization schemes (originated in  works by Dowker--Critchley \cite{Dowker1976} and Hawking \cite{Hawking1977}) use a formal, \emph{local} spectral zeta function or heat kernel, and their relationships with the Hadamard parametrix were studied by Wald \cite{Wald1979}, Moretti \cite{Moretti1999,morettilong} and Hack--Moretti \cite{Hack2012a}. We remark   in this context that in Theorem \ref{thm:dynres} we can replace  globally defined complex powers $(\square_g-z)^{-\cv}$ with the local parametrix $H_N^{(\cv)}(z)$ and correspondingly we can replace the spectral zeta density $\zeta_{g,\varepsilon}(\cv)$ by a local analogue $\zeta_{g,\varepsilon}^{\rm loc}(\cv)$ defined using $H_N^{(\cv)}(z)$. This weaker, local formulation does not use the Feynman condition and thus holds true generally.

\subsection*{Acknowledgments} { We thank the anonymous reviewers for their useful suggestions and feedback. } Support from the grant ANR-16-CE40-0012-01 is gratefully acknowledged. The authors  also grateful to the MSRI in Berkeley and the Mittag--Leffler Institute in Djursholm for their kind hospitality during thematic programs and workshops in 2019--20.   


\section{Log-polyhomogeneous scaling and dynamical residue}\label{section2}

\subsection{Notation} { Throughout the paper, given a vector field $V\in C^\infty(T\pazocal{M})$ and a smooth function $f\in C^\infty(\pazocal{M})$ on a smooth manifold $\pazocal{M}$, we  denote by $e^{tV}:\pazocal{M}\mapsto \pazocal{M}$, $t\in \mathbb{R}$ the  flow generated by $V$, and by $e^{-tV}f:=f(e^{-tV}.)\in C^\infty(\pazocal{M})$ the pull-back of $f$ by the flow $e^{-tV}$. Furthermore, when writing  $Vf\in C^\infty(\pazocal{M})$ we will mean that  the vector field $V$ acts on $f$ by {Lie derivative}, i.e.~$Vf=\left(\frac{d}{dt}\left(e^{tV}f\right)\right)|_{t=0}$. }

\subsection{Euler vector fields and scaling dynamics}\label{ss:euler}

Let $M$ be a smooth manifold, and let $\Delta=\{(x,x) \st x\in M\}$ be the diagonal in $M\times M$. Our first objective is to  introduce a class of 
{ 
{Schwartz kernels} defined in some neighborhood of $\Delta$},
which have prescribed 
analytical 
behavior under scaling with respect to $\Delta$. 

More precisely, an adequate notion of scaling is provided by the dynamics generated by the following class of vector fields.

\begin{defi}[Euler vector fields]
Let  $\pazocal{I}\subset C^\infty(M\times M)$  be the ideal of smooth functions 
vanishing at the diagonal $\Delta=\{(x,x) \st x\in M\}\subset M\times M$ and $\pazocal{I}^k$ its $k$-th power.
{ A vector field} $\euler$ 
defined near the diagonal 
$\Delta$
is called \emph{Euler} if near $\Delta$, $\euler f=f+\pazocal{I}^2$ for all $f\in \pazocal{I}$.

For the sake of simplicity,  we will only consider { Euler vector fields $X$ scaling with respect to the diagonal} which in addition preserve the fibration 
$\pi: M\times M \ni (x,y)\mapsto x\in M$ 
 projecting on the first factor. We refer to any such $\euler$ simply as to an \emph{Euler vector field}. 
\end{defi}

{ In our definition, $\euler$ only needs to be defined on some neighborhood of $\Delta$ which is stable by the dynamics.
Euler vector fields appear to have been first defined { by Mark Joshi, who called them \emph{radial vector fields}. They were used in his works \cite{10.2307/2162103,Joshi1998} for defining polyhomogeneous Lagrangian and paired Lagrangian distributions by scaling}. Then unaware of Joshi's work, it appeared in the first author's thesis \cite{dangthesis}, see also \cite[Def.~1.1]{DangAHP}. They were independently found by Bursztyn--Lima--Meinrenken \cite{Bursztyn2019}, see also  \cite{Bischo2020} and the survey \cite{Meinrenken2021}. 
}

A consequence of the definition of Euler vector fields $\euler$ is that
if $f\in \pazocal{I}^k$ then $\euler f-kf\in \pazocal{I}^{k+1}$ which is  
easily proved by induction using Hadamard's lemma.
Another useful consequence of the definition of $\euler$ is that
we have the equation:
\begin{equation}\label{e:idneardiageuler}
{ \left(Xdf-df\right)|_{\Delta}=0}
\end{equation}
for all { smooth functions $f$ defined near} $\Delta$, { where $\euler df$ means the vector field $\euler$ acting on the $1$-form $df$ by Lie derivative, and $|_\Delta$ means the restriction on the diagonal.}
{ The equation (\ref{e:idneardiageuler})} can be easily checked by an immediate coordinate calculation.
We view $df|_\Delta$ as a smooth section of $T^*M^2$, a $1$--form, restricted over $\Delta$.

{ Recall that for $t\in \rr$, $e^{t\euler}$  is the flow of $\euler$ at time $t$. }

\begin{ex}
On $\mathbb{R}^4$, the dynamics $ e^{t\euler}:  \left(\mathbb{R}^4\right)^2\ni(x,y) \mapsto (x,e^{t}(y-x)+x)\in \left(\mathbb{R}^4\right)^2 $ preserves
the fibers of $\left(\mathbb{R}^4\right)^2 \ni (x,y)\mapsto x\in \mathbb{R}^4$.
\end{ex}

Euler vector fields can be obtained from any torsion-free connection $\nabla$ and the geodesic exponential  $\exp_x^\nabla:T_xM\to M$ defined using $\nabla$.
Namely, a \emph{geodesic Euler vector field} is obtained by setting
$$
{ \euler f(x,y)=\frac{d}{dt}f\big(x,\exp^\nabla_x(tv)\big)|_{t=1}},
$$
where $y=\exp_x^\nabla(v)$. Moreover, Euler vector fields form a \emph{particular class} of the Morse--Bott vector fields
where $\Delta$ is the critical manifold, the Morse index is $0$ and all Lyapunov exponents of $\euler$ equal $1$ or $0$.

Let us describe in simple terms the dynamics of Euler vector fields. 
\begin{lemm}[{Lyapunov exponents and bundles}]
Let $\euler$ be an Euler vector field. There exists a unique subbundle
$N\Delta\subset T_\Delta \left(M\times M\right)$ such that
$de^{t\euler}=e^{t}\,\id: N\Delta\to N\Delta$~\footnote{In the terminology of dynamical systems, this is a simple instance of a Lyapunov
bundle.}. 
\end{lemm}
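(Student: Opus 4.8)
The plan is to work entirely in the linearization of the flow along the fixed-point set $\Delta$, reducing everything to a statement about the endomorphism $de^{tX}$ acting on the normal bundle $T_\Delta(M\times M)/T\Delta$. First I would observe that $\Delta$ is pointwise fixed by $e^{tX}$: indeed, since $Xf = f + \pazocal{I}^2$ for $f\in\pazocal{I}$, the vector field $X$ vanishes on $\Delta$ (it lies in $\pazocal{I}\cdot C^\infty(TM^2)$), so each point of $\Delta$ is stationary and the linearized flow $A_t := de^{tX}|_p$ is a well-defined one-parameter group on $T_p(M\times M)$ for every $p\in\Delta$. The infinitesimal generator is the linearization $L := (\nabla X)|_p$ of $X$ at $p$, and $A_t = e^{tL}$.

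Next I would compute the eigenvalues of $L$ using the defining property of an Euler vector field together with \eqref{e:idneardiageuler}. The key point is: $T_p\Delta$ is exactly the kernel of $L$, while $L$ acts as the identity on a complementary subspace. To see the first part, note that if $f$ is a smooth function constant along $\Delta$ near $p$ — equivalently $df|_\Delta$ annihilates $T\Delta$ — one checks $X df|_\Delta = 0$, which dualizes to $L$ fixing nothing transverse to... more precisely, I would use \eqref{e:idneardiageuler}, $(Xdf - df)|_\Delta = 0$ for \emph{all} $f$: pairing the $1$-form identity with a tangent vector $v\in T_p(M\times M)$ and using that Lie derivative on $1$-forms is dual to $A_t$ on vectors, this says $A_t^{*}(df) = e^{t}\,df$ modulo the part of $df$ that is constant under the flow, i.e. modulo $df$ that annihilates the normal directions. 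Concretely, choosing coordinates $(x,h)$ with $\Delta = \{h=0\}$ and $X = \sum h^i\partial_{h^i} + O(h^2)\partial$, one reads off directly that $L$ has eigenvalue $1$ on $\mathrm{span}(\partial_{h^i})$ and eigenvalue $0$ on $\mathrm{span}(\partial_{x^j}) = T_p\Delta$. Since the $0$-eigenspace is canonically $T_p\Delta$, the sum $T_p\Delta \oplus N_p$, with $N_p$ the $1$-eigenspace of $L$, is a canonical (coordinate-independent, because it is the generalized eigenspace decomposition of the intrinsically-defined operator $L$) splitting; one must also check $L$ is semisimple, i.e. there is no nilpotent part mixing the eigenvalues $0$ and $1$, which again follows from $X f - kf\in\pazocal{I}^{k+1}$ applied with $k=1$ ensuring the $O(h^2)$ corrections do not contribute to $L$.

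Then I would set $N\Delta := \bigcup_{p\in\Delta} N_p \subset T_\Delta(M\times M)$; smoothness in $p$ is automatic since $L$ depends smoothly on $p$ and its spectrum $\{0,1\}$ is locally constant, so the spectral projector onto the $1$-eigenspace varies smoothly (Riesz projector / continuity of spectral subspaces for a smooth family with fixed, separated spectrum). By construction $A_t|_{N_p} = e^{tL}|_{N_p} = e^{t}\,\id$, which is the asserted relation $de^{tX} = e^t\,\id$ on $N\Delta$. Uniqueness: any subbundle $N'\subset T_\Delta(M^2)$ on which $de^{tX}$ acts by $e^t\,\id$ must lie in the $1$-eigenspace of each $L|_p$ (differentiate in $t$ at $t=0$), hence $N'_p \subseteq N_p$; if moreover $N'$ is required to be complementary to $T\Delta$ — or one simply notes both have rank $n$ — then $N'_p = N_p$.

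The main obstacle I expect is the bookkeeping needed to justify that the linearization $L$ is genuinely semisimple with the claimed eigenspaces purely from the algebraic condition $Xf = f + \pazocal{I}^2$ (equivalently \eqref{e:idneardiageuler}), rather than from an assumed coordinate form — that is, upgrading ``$X$ looks like the radial field to first order at $\Delta$'' to ``$de^{tX}$ splits $T_\Delta M^2$ intrinsically.'' Once the intrinsic eigenspace decomposition of $L$ is in hand, smoothness, the flow identity, and uniqueness are all short.
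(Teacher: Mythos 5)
Your proposal is correct and follows essentially the same route as the paper: both linearize the flow along the fixed manifold $\Delta$, identify the generator $L$ of $de^{tX}$ as a projection with kernel $T\Delta$ and $1$-eigenspace transverse to it (the paper via the dual action on $1$-forms $df$ with $f\in\pazocal{I}$, you via the adapted-coordinate form $X=\sum(h^i+O(\pazocal{I}^2))\partial_{h^i}$), and define $N\Delta$ as the $1$-eigenbundle of $L$, so that $de^{tX}=e^{tL}=e^t\,\mathrm{id}$ there. Your extra care about semisimplicity of $L$, smoothness of the eigenbundle, and uniqueness (which requires interpreting the subbundle as the maximal, rank-$n$ one) fills in details the paper leaves implicit.
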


\begin{proof}
The flow $e^{-t\euler}$ fixes $\Delta$ 
hence the differential $de^{-t\euler} :TM^2\to TM^2$ restricted to $\Delta$ defines a family of bundle isomorphisms
$de^{-t\euler}: T M^2|_\Delta \to T M^2|_\Delta$, $\forall t\in \mathbb{R}$.
Now using the group property of the flow $e^{-t\euler}e^{-s\euler}=e^{-(t+s)\euler}$,
we deduce that $de^{-tX}de^{-sX}=de^{-(t+s)\euler}:T M^2|_\Delta \to T M^2|_\Delta$.
We define the bundle map $L_\euler:T M^2|_\Delta \to T M^2|_\Delta$ as
$\frac{d}{dt}de^{t\euler}|_{t=0}$, which is the linearized action of $\euler$ localized at $\Delta$. By uniqueness of solutions to ODE and the group property of $de^{t\euler}:T M^2|_\Delta \to T M^2|_\Delta$, we find that $de^{t\euler}=e^{tL_\euler}:T M^2|_\Delta \to T M^2|_\Delta, \forall t\in \mathbb{R}$. Recall that for all smooth germs $f$ near $\Delta$, we have 
$Xdf=df|_\Delta$, we view $df|_\Delta$ as a smooth section of $T^*M^2$ over $\Delta$. Now we observe the following { identity} on $1$-forms restricted over $\Delta$: 
{
\begin{eqnarray*}
\forall f, df=
\euler df= \Big(\frac{d}{dt}
\left(e^{t\euler}df\right)\Big)|_{t=0}=\Big(\frac{d}{dt}
d\left(e^{t\euler}f\right)\Big)|_{t=0}=\Big(\frac{d}{dt}
\left(df\circ de^{t\euler}\right)\Big)|_{t=0}=L_\euler^* df 
\end{eqnarray*}
}
where $L_\euler^*:T^*M^2|_\Delta\to T^*M^2|_\Delta$ is the transpose of {$L_\euler$}.
The above equation implies that the eigenvalues of the bundle map $L_\euler:TM^2|_\Delta\mapsto TM^2|_\Delta$ are {$1$ or $0$}. So we define $N\Delta\subset TM^2|_\Delta$ as the eigenbundle of $L_\euler$ for the eigenvalue $1$.
\end{proof}

\begin{lemm}[Stable neighborhood]\label{l:stable}
There  exists a neighborhood $\pazocal{U}$ of $\Delta$ in $M\times M$ such that $\pazocal{U}$ is stable by the backward flow, i.e.~$e^{-t\euler}\pazocal{U}\subset \pazocal{U}$  for all $t\in \mathbb{R}_{\geqslant 0}$.
\end{lemm}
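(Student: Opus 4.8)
The plan is to localize the problem near each point of $\Delta$ and then take a union, using the elementary fact that an arbitrary union of backward-invariant sets is again backward-invariant. So it suffices to produce, for every $p_0\in\Delta$, an open neighborhood $\pazocal{U}_{p_0}\ni p_0$ with $e^{-t\euler}\pazocal{U}_{p_0}\subset\pazocal{U}_{p_0}$ for all $t\geqslant 0$; then $\pazocal{U}:=\bigcup_{p_0\in\Delta}\pazocal{U}_{p_0}$ is an open neighborhood of $\Delta$ stable under the backward flow.

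To build $\pazocal{U}_{p_0}$, fix $p_0=(x_0,x_0)$ and work in coordinates $(x,h)$ near $p_0$ with $\pi(x,h)=x$ and $\Delta=\{h=0\}$ (take coordinates $x$ on $M$ near $x_0$ and set $h^i:=y^i-x^i$). Since $\euler$ vanishes along $\Delta$ and preserves $\pi$, its flow permutes the fibres of $\pi$ and fixes each diagonal point, hence fixes each fibre; so $\euler$ is tangent to the fibres and has the form $\euler=\sum_i a^i(x,h)\,\p_{h^i}$, and applying the Euler condition to $f=h^i$ gives $a^i(x,h)=h^i+O(|h|^2)$ locally uniformly. I would then use the obvious Lyapunov function $\rho:=|h|^2=\sum_i(h^i)^2$: a one-line computation gives $\euler\rho=2\sum_i h^i a^i=2|h|^2+O(|h|^3)$, so on a small enough coordinate ball $B=\{|x-x_0|<\delta,\ |h|<\delta\}$ contained in the domain of $\euler$ one has $\euler\rho\geqslant\rho\geqslant 0$.

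Then set $\pazocal{U}_{p_0}:=\{|x-x_0|<\delta,\ \rho<{\delta'}^2\}\subset B$ for some $\delta'<\delta$, and verify backward invariance by a trapping argument. Along a backward orbit $\gamma(t)=e^{-t\euler}p$ with $p\in\pazocal{U}_{p_0}$, the $x$-coordinate is constant (verticality of $\euler$), while as long as $\gamma(t)\in B$ one has $\frac{d}{dt}\rho(\gamma(t))=-(\euler\rho)(\gamma(t))\leqslant-\rho(\gamma(t))\leqslant 0$, so $\rho(\gamma(t))\leqslant\rho(p)<{\delta'}^2$ (in fact $\leqslant e^{-t}\rho(p)$ by Gr\"onwall). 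Hence $\gamma(t)$ stays in the compact set $\{|x-x_0|=|x(p)-x_0|,\ |h|\leqslant|h(p)|\}\subset B$, which rules out finite-time escape (so $\gamma$ is defined for all $t\geqslant 0$) and keeps the orbit inside $\pazocal{U}_{p_0}$.

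The construction is short. The two points that need a little attention are precisely those where a naive approach would stumble: the non-compactness of $\Delta$, which is why one localizes and takes a union rather than seeking a single global Lyapunov function, and the completeness of the backward flow on $\pazocal{U}$, which is handled by the trapping of orbits inside a fixed compact coordinate region on which $\euler$ is defined. Neither is a genuine obstacle, so in this lemma there is no truly difficult step.
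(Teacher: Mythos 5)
Your proof is correct and follows essentially the same strategy as the paper's: a quadratic Lyapunov function transverse to the diagonal satisfying $\euler\rho=2\rho+O(|h|^3)$, monotone decay along the backward flow, and gluing of the resulting backward-invariant sublevel sets. The only (harmless) differences are cosmetic — you use the coordinate function $|h|^2$ on small balls where the paper uses the squared Riemannian distance on sets $K\times M$, and you glue by an arbitrary union where the paper invokes paracompactness; your explicit trapping argument for completeness of the backward flow is a welcome detail.
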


The diagonal $\Delta\subset M\times M$ 
is a critical manifold of $\euler$ 
and is preserved by the flow, and $\pazocal{U}$ 
is the \emph{unstable manifold} 
of $\Delta$ in the terminology 
of dynamical systems. 
The vector field $\euler$ is 
\emph{hyperbolic} in the normal direction $N\Delta$ 
as we will next see.

\begin{refproof}{Lemma \ref{l:stable}}  The idea is to observe that by definition of an Euler vector field $V$, near any $p\in \Delta$ we can choose an arbitrary coordinate frame $(x^i,h^i)$ such that $\Delta$ is locally given by the equations $\{h^i=0\}$ and $\euler=(h^i+A_i(x,h))\partial_{h^i}$ where $A_i\in \pazocal{I}^2$. The fact that there is no component in the direction $\partial_{x^i}$
comes from the fact that our vector field $\euler$ preserves the fibration with leaves $x=\text{constant}$. 

Fix a compact $K\subset M$ and consider the product $K\times M $,
which contains $\Delta_K=\{(x,x)\in M^2 \st x\in K\}$ and is preserved by the flow.
For the moment we work in $K\times M$ and we conclude a global statement later on. 
We also choose some Riemannian metric $g$ on $M$ and consider the  smooth function germ $M^2  \ni (m_1,m_2)\mapsto \mathbf{d}_g^2(m_1,m_2)\in \mathbb{R}_{\geqslant 0} $
defined near the diagonal $\Delta_K\subset K\times M$, where $\mathbf{d}_g$ is the distance function. 
In the local coordinate frame $(x^i,h^i)_{i=1}^n$ defined near $p$, $\mathbf{d}^2$ reads
$$ \mathbf{d}^2((x,0),(x,h)) = A_{ij}(x)h^ih^j+\pazocal{O}(\vert h\vert^3) $$ 
where $A_{ij}(x)$ is a positive definite matrix. Thus setting $f=\mathbf{d}^2$ yields
$\euler f=2f+\pazocal{O}(\vert h\vert^3)$ by definition of $\euler$
and
therefore there exists some
$\varepsilon>0$ such that $\forall (x,h)\in K\times M, f\leqslant \varepsilon\implies \euler f\geqslant 0$.
Observe that $\euler \log f=2+\pazocal{O}(\mathbf{d}_g) $, $\euler \log(f)|_{\Delta_K}=2$ and $\euler \log(f)$ is continuous near $\Delta_K$. 
By compactness of $K$, there exists some $\varepsilon>0$ s.t. if $f\leqslant \varepsilon$
then $\euler \log(f)\geqslant \frac{3}{2}$.
We take  $\pazocal{U}_K=\{f\leqslant \varepsilon\}\cap K\times M$.

The vector field $\euler$ vanishes on $\Delta$ therefore the flow $e^{-t\euler}$
preserves $\Delta$.
Assume there exists $(x,h)\in \pazocal{U}_K\setminus \Delta_K$ such that $e^{-T\euler}(x,h)\notin \pazocal{U}_K$ for some $T> 0$.
Without loss of generality, we may even assume that $f(x,h)=\varepsilon$.
Then, 
let us denote  $T_1=\inf\{ t \st  t>0,\ f(e^{-t\euler}(x,h))=\varepsilon  \}$ which is intuitively 
the first time for which $f(e^{-T_1\euler}(x,h))=f(x,h)=\varepsilon$. 
Since $(x,h)\notin \Delta_K$,  we have $-\euler \mathbf{d}^2(x,h)\leqslant -\frac{3}{2} \mathbf{d}^2(x,h)<0$ and setting $f=\mathbf{d}^2$
yields
$$ f(e^{-t\euler}(x,h))=f(x,h)-t\euler f(x,h)+\pazocal{O} (t^2) $$ which means that $f(e^{-t\euler}(x,h))$ is strictly decreasing near $t=0$, hence  necessarily  $T_1>0$. By the fundamental theorem of calculus,
$$f(e^{-T_1\euler}(x,h))-f(x,h)=\int_0^{T_1} -{\left(\euler f\right)}(e^{-s\euler}(x,h))ds  $$
and since 
$$ -{\left(\euler f\right)}(e^{-s\euler}(x,h))\leqslant -\frac{3}{2}f(e^{-s\euler}(x,h) ) <0$$ for all $s\in [0,T_1]$, we conclude that
$f(e^{-T_1\euler}(x,h))<f(x,h)$ which yields a contradiction. So 
for all compact $K\subset M$, we found a neighborhood $\pazocal{U}_K\subset K\times M$ of $\Delta_K$ (for the induced topology) which is stable by $e^{-t\euler},t\geqslant 0$. Then by paracompactness of $M$, we can take a locally finite subcover of $\Delta$ by such sets and we deduce the existence of a global neighborhood $\pazocal{U}$ of $\Delta$ which is stable by
$e^{-t\euler},t\geqslant 0$.
\end{refproof}

{
In the present section, instead of using charts, we  
favor a presentation using coordinate frames,
which makes notation
simpler. The two viewpoints are equivalent
since given a chart $\kappa:U\subset \pazocal{M}\to \kappa(U)\subset \mathbb{R}^n$ on some smooth manifold $\pazocal{M}$ of dimension $n$, the linear 
coordinates $(x^i)_{i=1}^n \in \mathbb{R}^{n*}$ on $\mathbb{R}^n$ can be pulled back on 
$U$ as a coordinate frame $(\kappa^*x^i)_{i=1}^n\in C^\infty(U;\mathbb{R}^n)$.  
}

The next proposition  gives a normal form for Euler vector fields. 

\begin{prop}[Normal form for Euler vector fields]\label{p:normalform}
Let $\euler$ be an Euler vector field. 
There exists a unique subbundle
$N\Delta\subset T_\Delta \left(M\times M\right)$, such that
$ de^{t\euler}=e^{t}\id: N\Delta\to N\Delta$. 

For all $p\in \Delta$, there exist  coordinate functions $(x^i,h^i)_{i=1}^n$ defined near $p$ such that { in these local coordinates} near $p$, $\Delta=\{ h^i=0 \}$ and
$
\euler=\sum_{i=1}^n h^i\partial_{h^i}$ $\forall i\in \{1,\dots,n\}$.
\end{prop}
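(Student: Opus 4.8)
The plan is to reduce the statement to the construction, near any given $p\in\Delta$, of exactly $n$ smooth functions that are \emph{exact} eigenfunctions of $\euler$ for the eigenvalue $1$ and whose differentials at $p$ are linearly independent and independent of the pullbacks under $\pi$ of coordinates on the base. The existence and uniqueness of $N\Delta$, together with the fact that the eigenvalues of $L_\euler$ are $0$ and $1$, have already been obtained in the lemma on Lyapunov exponents and bundles, so only the local coordinate normal form remains. As in the proof of Lemma \ref{l:stable}, near $p$ I fix coordinate functions $(x^i,h^i)_{i=1}^n$ with $\Delta=\{h^i=0\}$, with $(x^i)$ pulled back via $\pi$ from coordinates on $M$ near $\pi(p)$ so that the fibres of $\pi$ are the level sets of $x$; then $\euler$ has no $\p_{x^i}$-component because it is $\pi$-vertical, and $\euler f-f\in\pazocal{I}^2$ for $f\in\pazocal{I}$ forces $\euler=\sum_i(h^i+A_i)\p_{h^i}$ with $A_i\in\pazocal{I}^2$.

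The key step is the following averaging statement: for every $u\in\pazocal{I}$ defined near $p$, the limit
\[ \tilde u:=\lim_{t\to+\infty}e^{t}\,\big(e^{-t\euler}u\big) \]
exists in $C^\infty$ on a small enough neighbourhood of $p$, satisfies $\euler\tilde u=\tilde u$, and obeys $\tilde u-u\in\pazocal{I}^2$. Indeed, setting $g(t)=e^{t}(e^{-t\euler}u)$ one computes $g'(t)=-e^{t}e^{-t\euler}v$ with $v:=\euler u-u\in\pazocal{I}^2$, hence $\tilde u=u-\int_0^{+\infty}e^{t}e^{-t\euler}v\,dt$, and $\euler\tilde u-\tilde u=\lim_{t\to+\infty}e^{t}e^{-t\euler}(\euler u-u)=\lim_{t\to+\infty}e^{t}e^{-t\euler}v$; so everything follows once one knows that $e^{t}e^{-t\euler}v\to 0$ in $C^\infty$ near $p$ for $v\in\pazocal{I}^2$ and that the corresponding integral converges in $C^\infty$ near $p$ (that $\tilde u-u\in\pazocal{I}^2$ is then automatic, since $e^{-t\euler}$ preserves $\pazocal{I}^2$, being $\pi$-vertical and fixing $\Delta$). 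This is where normal hyperbolicity enters: by Lemma \ref{l:stable} the backward flow $e^{-t\euler}$ maps a small neighbourhood of $p$ into itself, and it contracts towards $\Delta$ at rate arbitrarily close to $e^{-t}$; combined with the second-order vanishing of $v$ on $\Delta$ and uniform-in-$t$ bounds on the derivatives of $e^{-t\euler}$ near $\Delta$ (from the variational equations and the Lyapunov structure established above), this yields $\|e^{t}e^{-t\euler}v\|_{C^m(V)}\leqslant C_m e^{-(1-\delta)t}$ on a small $V\ni p$. Concretely, in the coordinates above, writing $e^{-t\euler}(x,h)=(x,\phi_t(x,h))$ and $\psi_t:=e^{t}\phi_t$, one finds $\frac{d}{dt}\psi_t=-e^{t}A(x,e^{-t}\psi_t)=\pazocal{O}(e^{-t}|\psi_t|^2)$, whence $\psi_t\to\psi_\infty$ in $C^\infty_{\rm loc}$ with $\psi_\infty=h+\pazocal{O}(|h|^2)$, all remainder estimates carrying an extra factor $e^{-t}$ because $A$ and its derivatives vanish on $\Delta$.

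Applying this with $u=h^i$ produces functions $\tilde h^i$ with $\euler\tilde h^i=\tilde h^i$, $\tilde h^i\in\pazocal{I}$ and $\tilde h^i-h^i\in\pazocal{I}^2$, so that $d\tilde h^i|_p=dh^i|_p$. Hence $(x^1,\dots,x^n,\tilde h^1,\dots,\tilde h^n)$ is a coordinate system near $p$ in which $\Delta=\{\tilde h^i=0\}$, and there $\euler=\sum_i(\euler x^i)\p_{x^i}+\sum_i(\euler\tilde h^i)\p_{\tilde h^i}=\sum_i\tilde h^i\p_{\tilde h^i}$, because $\euler x^i=0$ ($x^i$ is constant along the fibres of $\pi$, to which $\euler$ is tangent). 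Renaming $\tilde h^i$ as $h^i$ finishes the proof. The main obstacle is precisely the $C^\infty$-convergence of the limit defining $\tilde u$ (equivalently, of $\psi_t$): one has to control uniformly in $t$ the full jet of the backward flow near the critical manifold, which rests on the normal hyperbolicity of $\euler$, i.e.~on the fact, proven above, that $L_\euler$ has only the eigenvalues $0$ and $1$.
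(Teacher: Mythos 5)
Your proposal is correct and follows the same overall strategy as the paper: take the $(x^i,h^i)$ frame adapted to the fibration, in which $\euler=\sum_i(h^i+A_i)\partial_{h^i}$ with $A_i\in\pazocal{I}^2$, and then produce exact eigenfunctions of $\euler$ by averaging along the backward flow. The difference is in how the convergence of the averaging is justified. The paper inserts an intermediate step: it first corrects $h^i$ by a formal recursion plus Borel summation to obtain $h^i_\infty$ with $(\euler-1)h^i_\infty\in\pazocal{I}^\infty$, and only then applies the integral $\int_0^\infty e^{t}\,e^{-t\euler}\big((\euler-1)h^i_\infty\big)\,dt$; with an $\pazocal{I}^\infty$ defect the integrand decays faster than any exponential rate and is stable under differentiation, so $C^\infty$ convergence is immediate. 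You instead apply the limit $\tilde h^i=\lim_{t\to\infty}e^{t}(e^{-t\euler}h^i)=\psi^i_\infty$ directly, with only an $\pazocal{I}^2$ defect, and you correctly supply the sharper estimate this requires: writing $\psi_t=e^{t}\phi_t$ one gets $\tfrac{d}{dt}\psi_t=-e^{t}A(x,e^{-t}\psi_t)=\pazocal{O}(e^{-t})$, and every derivative of this ODE again carries a factor $e^{-t}$ because $A$ vanishes to second order on $\Delta$, which gives $C^\infty_{\rm loc}$ convergence of $\psi_t$ and hence of the integral $\int_0^\infty e^{t}e^{-t\euler}v\,dt$ for $v=(\euler-1)h^i\in\pazocal{I}^2$. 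The remaining verifications ($\euler\tilde h^i=\tilde h^i$ by commuting $\euler$ with the $C^1$ limit, $\tilde h^i-h^i\in\pazocal{I}^2$ so that $(x,\tilde h)$ is a chart with $\Delta=\{\tilde h=0\}$, and $\euler x^i=0$ by verticality) are all sound. In short: your route is shorter and avoids Borel's lemma at the price of a genuine, but correctly executed, normal-hyperbolicity estimate on the full jet of the rescaled flow; the paper's route trades that estimate for a formal power-series correction that makes the final integral converge trivially.
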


\begin{rema}
This result was proved in \cite{dangthesis} and also later 
in the paper by Bursztyn--Lima--Meinrenken \cite{Bursztyn2019}, cf.~the review \cite{Meinrenken2021}. Our proof here is different and more in the spirit of the Sternberg--Chen linearization theorem.
\end{rema}

\begin{proof} 
\step{1}
We prove the dynamics contracts exponentially fast. We  use the distance function $f=\mathbf{d}^2$ and note that
$-\euler  \log(f)\leqslant -\frac{3}{2} $ on the open set $\pazocal{U}$ constructed in Lemma~\ref{l:stable} therefore
$ { e^{-t\euler}}f\leqslant e^{-\frac{3}{2}t}f $ by Gronwall Lemma.
Consequently, there exists a neighborhood $\pazocal{U}$ of $\Delta$ s.t. for any function $f\in \pazocal{I}$ ($f$ vanishes on 
the diagonal $\Delta$) and $U$ is some bounded open subset, we have the exponential decay
$\Vert { e^{-t\euler}}f\Vert_{L^\infty(U)} \leqslant C e^{-Kt}
 $ for some $C>0,  K>\frac{1}{2}$ due to the hyperbolicity
in the normal direction of $e^{-t\euler}$.
Moreover, Hadamard's lemma states that if $f\in \pazocal{I}^k$ which means $f$ vanishes of order $k$, then 
locally we can always write
$f$ as $\sum_{\vert \beta\vert = k} h^\beta { g_\beta}(x,h) $
where $h\in \pazocal{I}$ and therefore gluing with a partition of unity
yields a decay estimate of the form 
$$\Vert { e^{-t\euler}}f\Vert_{L^\infty(U)} \leqslant C e^{-Kkt}$$
where $C>0$ and we have better exponential decay. 
So starting from the coordinates $(x^i,h^i)$ from {the proof of Lemma~\ref{l:stable}}, we will correct the coordinates
$(h^i)_{i=1}^n$ using the exponential contractivity of the flow
to obtain normal forms coordinates.

\step{2 }We now correct $h^i$ so that 
$\euler h^i=h^i$ modulo an element in $\pazocal{I}^\infty$.
First observe that $\euler h^i-h^i\in \pazocal{I}^2$ by definition, therefore 
setting $ h_1^i= h^i +\varepsilon^i_1$, $\varepsilon^i_1= -\frac{(\euler h^i-h^i)}{2}  $, we verify that
\begin{eqnarray}
 \euler h_1^i-h_1^i   \in \pazocal{I}^3. 
 \end{eqnarray}
By recursion, we define a sequence $(h_k^i)_{i=1}^n, k\in \mathbb{N}$,  
defined as
$h_{k+1}^i=h_k^i+\varepsilon_{k+1}^i$ 
where $\varepsilon_{k+1}^i= -\frac{(\euler h^i_k-h^i_k)}{k+2}  $ 
and we verify that for all $k\in \mathbb{N}$, we have $ \euler h_k^i-h_k^i   \in \pazocal{I}^{k+2}$.
By Borel's Lemma, we may find a smooth germ 
$h_\infty^i\sim h^i + \sum_{k=1}^\infty \varepsilon_k^i $ hence we
deduce that there exists $(h_\infty^i)_{i=1}^n$ s.t. 
$ \euler h_\infty^i-h_\infty^i   \in \pazocal{I}^\infty $. 

\step{3} We use the flow to make the coordinate functions $(h_\infty^i)_{i=1}^n$ exact solutions of
$\euler f=f$.
Set $$\tilde{h}^i=h^i_\infty-\int_0^\infty e^{t} { \left( e^{-t\euler}\left((\euler-1)h^i_\infty \right)\right) }dt$$
where the integrand converges absolutely since $(\euler-1)h^i_\infty\in \pazocal{I}^\infty $, hence ${ e^{-t\euler}\left((\euler-1)h^i_\infty \right)}=\pazocal{O}(e^{-tNK}) $ for all $N>0$ where $K>\frac{1}{2}$. The function $\tilde{h}^i$ is smooth since
the ideal $\pazocal{I}^\infty$ is stable by derivatives therefore
differentiating under the integral $\int_0^\infty e^{t} { \left( e^{-t\euler}\left((\euler-1)h^i_\infty \right)\right) }dt$ 
does not affect the decay of the integral. 
So we obtain that for all $i\in \{1,\dots,n\}$, $\euler \tilde{h}^i=\tilde{h}^i$ which 
solves the problem since $(x^i,\tilde{h}^i)$ is a germ of smooth coordinate frame near $p$.
\end{proof}

\subsection{Log-polyhomogeneity}

Let $\euler$ be an Euler vector field.
One says that a distribution $u\in \pazocal{D}^\prime(\pazocal{U})$
is \emph{weakly 
homogeneous of degree $s$} w.r.t.~scaling with $\euler$ if the family $(e^{ts}  { (e^{-t\euler}u)})_{t\in \mathbb{R}_{\geqslant 0}}$ 
is bounded in $\pazocal{D}^\prime(\pazocal{U})$ (cf.~Meyer \cite{Meyer}). One can also introduce a more precise variant of that definition by replacing  $\pazocal{D}^\prime(\pazocal{U})$ with $\pazocal{D}^\prime_\Gamma(\pazocal{U})$ for some closed conic $\Gamma\subset T^*M^2\setminus\zero$, where $\pazocal{D}^\prime_\Gamma(\pazocal{U})$  is Hörmander's space of distributions with wavefront set in $\Gamma$ (see \cite[\S8.2]{H} for the precise definition).  As shown in \cite[Thm.~1.4]{DangAHP}, {in the first situation without the wavefront condition, this defines} a class of distributions that is intrinsic, i.e.~which does not depend  on the choice of  Euler vector field $\euler$.

We consider  distributions with the following log-polyhomogenous behaviour under scaling transversally to the diagonal.

\begin{defi}[log-polyhomogeneous distributions]
Let $\Gamma$ be a closed conic set 
such that for some $\euler$-stable neighborhood $\pazocal{U}$ of the diagonal, 
\begin{eqnarray}
\forall t\geqslant 0, \ { e^{-t\euler}}\Gamma|_\pazocal{U}\subset \Gamma|_{\pazocal{U}},\\
\overline{\Gamma}\cap T^*_\Delta M^2=N^*\Delta.
\end{eqnarray}
We say that $u\in \pazocal{D}^\prime_\Gamma(\pazocal{U})$ is \emph{log-polyhomogeneous} w.r.t.~$\euler$ if
it admits the following asymptotic expansion under scaling: there exists $p\in \mathbb{Z}$, {$l\in \nn$} 
and distributions $(u_k)_{k=p}^{\infty}, 1\leqslant i \leqslant l $  
in $\pazocal{D}^\prime_\Gamma(\pazocal{U})$ such that for all $N>0$ and all $\varepsilon>0$,
\begin{eqnarray}\label{phexp}
{ e^{-t\euler}}u=\sum_{p\leqslant  k\leqslant N, 0\leqslant i\leqslant l-1}  e^{-tk} \frac{(-1)^i t^i}{i!}\left(\euler-k\right)^i u_k +\pazocal{O}_{\pazocal{D}^\prime_\Gamma(\pazocal{U})}(e^{-t(N+1-\varepsilon)}).
\end{eqnarray}
A distribution is called \emph{polyhomogeneous} if $l=0$. { In contrast, a  non-zero value for $l$ indicates the occurrence of \emph{logarithmic mixing} under scaling}.

We endow such {distributions} with a notion of convergence as follows: a sequence of
log-polyhomogeneous {distributions} $u_n$ converges  
$u_n\to v$ in log-polyhomogeneous {distributions} if 
$u_n\rightarrow v$ in $\pazocal{D}^\prime_\Gamma(M)$, for every $N$ each term in the asymptotic expansion converge 
$u_{n,k}\rightarrow v_{k}$, $k\leqslant N $ and the remainders $u_n-\sum_{k=p}^N u_{n,k} $ converge to $v-\sum_{k=p}^N v_{k} $ in the sense that
$$
 { e^{-t\euler}}\bigg(u_n-\sum_{k=p}^N u_{n,k}- \Big(v-\sum_{k=p}^N v_{k} \Big)\bigg)=\pazocal{O}_{\pazocal{D}^\prime_\Gamma(\pazocal{U})}(e^{-t(N+1-\varepsilon)})
 $$
  for all 
$\varepsilon>0$. 
\end{defi}

Thus, log-polyhomogeneous distributions have resonance type expansions under scaling with the vector field $\euler$. 
We stress, however, that each distribution $u_k$  in the expansion \eqref{phexp} is not necessarily homogeneous. 
In fact, it does
not necessarily scale like ${ e^{-t\euler}}u_k=e^{-tk}u_k$, but we may have
logarithmic mixing in the sense that:
\begin{eqnarray*}
{ e^{-t\euler}}u_k=\sum_{i=0}^{l-1} e^{-tk} \frac{(-1)^i t^i}{i!}\left(\euler-k\right)^i u_k. 
\end{eqnarray*}
This means that restricted to the linear span of $(u_k,(\euler-k) u_k,\dots,(\euler-k)^{l-1}u_k)$, the matrix of $\euler$ 
reads
\begin{eqnarray*}
\euler\left(\begin{array}{c}
u_k\\
(\euler-k) u_k\\
\vdots \\
(\euler-k)^{l-1}u_k
\end{array} \right)=\left(\begin{array}{cccc}
k&1&  &0\\
&k& \ddots &\\
&   &\ddots &1  \\
0 &  &   &k
\end{array} \right)\left(\begin{array}{c}
u_k\\
(\euler-k) u_k\\
\vdots \\
(\euler-k)^{l-1}u_k
\end{array} \right)
\end{eqnarray*}
so it has a Jordan block structure.

In the present paper, we will prove that  log-polyhomogeneous distributions which are
Schwartz kernels of pseudodifferential operators with classical symbols as well as Feynman propagators   have no Jordan blocks for the resonance $p\leqslant  k<0$ and there are Jordan blocks of rank $2$ for all $k\geqslant 0$. In other words, 
$(u_k,(\euler-k) u_k, (\euler-k)^2u_k) $ are linearly dependent of rank $2$ for every $k\geqslant 0$.
We introduce special terminology to emphasize this type of behaviour.

\begin{defi}[Tame log-polyhomogeneity]
A distribution $u\in \pazocal{D}^\prime_\Gamma(\pazocal{U})$ is \emph{tame log-polyhomogeneous} w.r.t.~$\euler$ 
if it is log-polyhomogeneous  w.r.t.~$\euler$  and 
\begin{eqnarray}
{ e^{-t\euler}}u=\sum_{p\leqslant  k<0}  e^{-tk}  u_k + \sum_{0\leqslant  k\leqslant N, 0\leqslant i\leqslant 1}  e^{-tk} \frac{(-1)^i t^i}{i!}\left(\euler-k\right)^i u_k +\pazocal{O}_{\pazocal{D}^\prime_\Gamma(\pazocal{U})}(e^{-t(N+1-\varepsilon)})
\end{eqnarray}
for all $\varepsilon>0$, i.e.~the Jordan blocks only occur for non-negative $k$ and have rank at most $2$.
\end{defi}

For both pseudodifferential operators with classical symbols and Feynman powers, 
we will prove that the property of being log-polyhomogeneous is \emph{intrinsic} and does not depend on the 
choice of Euler vector field used to define the log-polyhomogeneity. This generalizes the fact that the class of pseudodifferential operators 
with polyhomogeneous symbol is intrinsic.

\subsection{Pollicott--Ruelle resonances of \texorpdfstring{$e^{-t\euler}$}{exp(-tX)} acting on log-polyhomogeneous distributions}

For every tame log-polyhomogeneous distribution $\exd \in  \cD^\prime(\pazocal{U})$ and every $n\in \mathbb{Z}$, we  define a projector
$\Pi_n$ which extracts the quasihomogeneous part $\Pi_n(\exd) \in  \cD^\prime(\pazocal{U})$ of the distribution $\exd$.

Note that if a distribution $u$ is log-polyhomogenous w.r.t.~$\euler$, then for any test form $\varphi\in \Omega^{\bullet}_{\rm c}(\pazocal{U})$~\footnote{{We consider test forms because  Schwartz kernels of operators are not densities and it is appropriate to consider them as differential forms of degree $0$.}} 
where $\pazocal{U}$ is $\euler$-stable, 
we have an asymptotic expansion:
\begin{eqnarray*}
{\left\langle { (e^{-t\euler}u)},\varphi\right\rangle} = \sum_{k=p,0\leqslant i\leqslant l-1}^N e^{-tk}\frac{(-1)^it^i}{i!} \left\langle (\euler-k)^iu_{k},\varphi\right\rangle+\pazocal{O}(e^{-tN}). 
\end{eqnarray*}
The l.h.s.~is similar to dynamical correlators studied in dynamics and the asymptotic expansion is similar 
to expansions of dynamical correlators in hyperbolic dynamics.
So in analogy with dynamical system theory, we can define the Laplace transform
of the dynamical correlators and the Laplace transformed 
correlators have meromorphic continuation to the complex plane with poles
along the arithmetic 
progression $\{p,p+1,\dots \}$:
\[
\int_0^\infty e^{-tz}\left\langle { (e^{-t\euler}u)},\varphi\right\rangle dt=
\sum_{k=p,0\leqslant i\leqslant l-1}^N (-1)^i \frac{\left\langle (\euler-k)^i u_{k},\varphi\right\rangle}{(z+k)^{{i+1}}}+\text{holomorphic on }\Re z\leqslant N.
\]
These poles are \emph{Pollicott--Ruelle resonances} of the flow $e^{-t\euler}$ acting on 
log-polyhomogeneous distributions in $\pazocal{D}^\prime(\pazocal{U})$. 

 We can now use the Laplace transform to define
the projector $\Pi_n$ which extracts quasihomogeneous parts of distributions.
\begin{defi} \label{def:pi}
Suppose $u\in \cD^\prime(\pazocal{U})$ is log-polyhomogeneous. Then for $n\in\zz$ we define 
$$
\Pi_n(u)\defeq \frac{1}{2i\pi}\int_{\partial D} \mathfrak{L}_zu \,dz 
$$ 
where $ \mathfrak{L}_z u= \int_0^\infty e^{-tz}{ (e^{-t\euler}u)} \,dt$ and $D\subset \mathbb{C}$ is a small disc around $n$.
\end{defi}

\subsection{Residues as homological obstructions and scaling anomalies}\label{ss:toy}

%

Before considering the general setting, let us explain the concept of residue in the following fundamental example  (which is closely related to the discussion in the work of Connes--Moscovici~\cite[\S5]{Connes1995}, Lesch \cite{Lesch}, Lesch--Pflaum \cite{Lesch2000}, Paycha \cite{paycha2,paycha} and Maeda--Manchon--Paycha \cite{Maeda2005}).

Let $\Feuler\in {\cf(T\rr^{n})}$ be an \emph{Euler vector field with respect to $0\in \rr^{n}$}, i.e.~for all $f\in\cf(\rr^{n})$, $\Feuler f-f$ vanishes at $0$ with order $2$. For instance, we can consider $\Feuler=\sum_{i=1}^n \xi^i \partial_{\xi^i}$, where $(\xi^1,\dots,\xi^n)$ are the Euclidean coordinates. This simplified setting is meant to illustrate what happens on the level of \emph{symbols} or \emph{amplitudes} rather than Schwartz kernels near $\Delta\subset M\times M$, but these two points of view are very closely related. In our toy example, this simply corresponds to the relationship between momentum variables $\xi^{i}$ and position space variables $h^{i}$ by inverse Fourier transform, see Remark \ref{l:scalanomal}.

 Suppose $\exd\in \pazocal{D}^{\prime,n}(\rr^n\setminus\{0\})$ is a de Rham current of top degree
which solves the linear PDE:
\beq\label{toextend}
\Feuler \exd=0 \mbox{ in the sense of } \pazocal{D}^{\prime,n}(\rr^n\setminus \{0\}),
\eeq
which
means that the current $\exd$ is scale invariant on $\rr^n\setminus \{0\}$.
\begin{lemm}
Under the above assumptions, $\iota_\Feuler \exd$ is a closed current in 
{$\pazocal{D}^{\prime,n-1}(\rr^n\setminus \{0\})$} where $\iota_\Feuler$ denotes the contraction with $\Feuler$.
\end{lemm}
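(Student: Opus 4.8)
The plan is to use Cartan's magic formula $\pazocal{L}_\Feuler = d\iota_\Feuler + \iota_\Feuler d$ applied to the current $\exd$. Since $\exd$ has top degree $n$ on the $n$-dimensional manifold $\rr^n\setminus\{0\}$, we automatically have $d\exd = 0$ as an $(n+1)$-current vanishes. Hence $\pazocal{L}_\Feuler \exd = d\iota_\Feuler \exd + \iota_\Feuler d\exd = d\iota_\Feuler\exd$. The hypothesis \eqref{toextend} says precisely that $\pazocal{L}_\Feuler \exd = \Feuler\exd = 0$ (the Lie derivative of a top current being the scaling action), so we conclude $d(\iota_\Feuler \exd) = 0$, i.e.~$\iota_\Feuler\exd$ is closed in $\pazocal{D}^{\prime,n-1}(\rr^n\setminus\{0\})$.

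The only point requiring a little care is the interpretation of $\Feuler\exd$ as the Lie derivative $\pazocal{L}_\Feuler\exd$ acting on currents, which matches the convention fixed in the Notation subsection (the vector field acts by Lie derivative). One should also note that Cartan's formula, the exterior derivative, and contraction all extend by duality/continuity from smooth forms to de Rham currents, so the identity $\pazocal{L}_\Feuler = d\iota_\Feuler + \iota_\Feuler d$ holds verbatim at the level of currents; this is standard and can be invoked directly. I would spell out the one-line computation
\[
0 = \Feuler\exd = \pazocal{L}_\Feuler\exd = d\iota_\Feuler\exd + \iota_\Feuler d\exd = d\iota_\Feuler\exd,
\]
using $d\exd = 0$ for degree reasons.

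There is essentially no obstacle here: the statement is an immediate consequence of Cartan's formula together with the top-degree vanishing of $d\exd$. If anything, the ``hard part'' is purely bookkeeping — making sure the sign and degree conventions for currents are consistent with those used elsewhere in the paper, and recalling that $\iota_\Feuler$ lowers the degree by one so that $\iota_\Feuler\exd$ indeed lands in $\pazocal{D}^{\prime,n-1}(\rr^n\setminus\{0\})$ as claimed. This lemma then sets up the subsequent discussion in which the de Rham cohomology class of $\iota_\Feuler\exd$ in $H^{n-1}(\rr^n\setminus\{0\})\cong\rr$ serves as the homological obstruction to extending $\exd$ across the origin, which is the ``residue'' in this toy model.
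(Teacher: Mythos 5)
Your proof is correct and is exactly the paper's argument: Cartan's formula $d\iota_\Feuler+\iota_\Feuler d=\Feuler$ on currents, the vanishing of $d\exd$ for degree reasons, and the hypothesis $\Feuler\exd=0$ give $d\iota_\Feuler\exd=0$ in one line. Nothing is missing.
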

\begin{proof}
The current $\iota_\Feuler \exd$ is closed in $\pazocal{D}^{\prime,n-1}(\rr^n\setminus \{0\}) $
 by the 
Lie--Cartan formula $\left(d\iota_\Feuler+\iota_\Feuler d \right)=\Feuler$ and the fact that $\exd$ is closed as a top degree current:
\[ 
d\iota_\Feuler {\exd}=\left(d\iota_\Feuler+\iota_\Feuler d \right)\exd=\Feuler \exd=0.\vspace{-1em}
\]
\end{proof}

One can ask the question: \emph{is there a distributional extension
$\overline{\exd}\in  \pazocal{D}^{\prime,n}(\rr^n)$ of $\exd$ which satisfies the same scale invariance PDE on $\rr^n$? } The answer 
is positive unless there is an obstruction of cohomological nature which we explain in the following proposition.

\begin{prop}[Residue as homological obstruction]\label{p:residue}
Suppose $\exd\in \pazocal{D}^{\prime,n}(\rr^n\setminus\{0\})$ satisfies \eqref{toextend}. Let $\chi\in C^\infty_\c(\rr^n)$ be such that $\chi=1$ near $0$.
Then $d\chi$ is an exact form and
the pairing between the exact form $d\chi$ and the closed current $\iota_\Feuler \exd $
$$
\left\langle d\chi, \iota_\Feuler \exd \right\rangle=\int_{\rr^n} d\chi\wedge \iota_\Feuler \exd
$$
{does not depend} on the choice of $\chi$.

If moreover $\wf(\exd)\subset \{ (\xi,\tau dQ(\xi))  \st   Q(\xi)=0,\ \tau<0\}$ for some non-degenerate quadratic form $Q$ on $\rr^n$, then
$$
\int_{\mathbb{S}^{n-1}} \iota_\Feuler \exd=\left\langle d\chi, \iota_\Feuler \exd \right\rangle.
$$

There is a scale invariant extension $\overline{\exd}$ of $u$ if and only if 
the pairing $\left\langle d\chi, \iota_\Feuler \exd \right\rangle=0$, which is equivalent to saying that
the current $\iota_\Feuler \overline{\exd}\in \pazocal{D}^{\prime,n-1}(\rr^n)$ is closed. 
\end{prop}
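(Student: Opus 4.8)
The plan is to run all three assertions off the Lie--Cartan identity $\Feuler=d\iota_\Feuler+\iota_\Feuler d$ together with the Lemma just proved: $\iota_\Feuler\exd$ is a closed current on $\rr^n\setminus\{0\}$, since $\exd$ is closed (being of top degree) and $\Feuler\exd=0$. For the first assertion, given two cutoffs $\chi_0,\chi_1$ equal to $1$ near $0$, the difference $\chi_1-\chi_0$ lies in $C^\infty_\c(\rr^n\setminus\{0\})$, so $(\chi_1-\chi_0)\,\iota_\Feuler\exd$ is a compactly supported $(n-1)$-current on the boundaryless manifold $\rr^n\setminus\{0\}$; Stokes' theorem (i.e.~$\langle dT,1\rangle=0$ for such a current $T$) together with $d(\iota_\Feuler\exd)=0$ gives $\langle d(\chi_1-\chi_0),\iota_\Feuler\exd\rangle=\int d\big((\chi_1-\chi_0)\iota_\Feuler\exd\big)=0$, which is the claimed independence of $\chi$; that $d\chi$ is exact is of course tautological.

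For the second assertion I would first observe that the wavefront hypothesis forbids $\wf(\exd)$ --- hence $\wf(\iota_\Feuler\exd)\subseteq\wf(\exd)$ --- from meeting the conormal of any centred sphere $S_\rho=\{|\xi|=\rho\}$, $\rho>0$: at $\xi\in S_\rho$ that conormal is the radial line $\rr\,\xi$, while $dQ(\xi)$ is radial only when $\xi$ is an eigenvector of the matrix of $Q$, which forces $Q(\xi)\neq0$ by non-degeneracy. Hence $\iota_\Feuler\exd$ restricts to every $S_\rho$ in Hörmander's sense, and, being closed on $\rr^n\setminus\{0\}$, it has $\int_{S_\rho}\iota_\Feuler\exd$ independent of $\rho$ (Stokes on the annuli $\{a\leqslant|\xi|\leqslant b\}$, whose boundaries are homologous spheres). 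Taking $\chi=\psi(|\xi|)$ radial and slicing the top-degree current $d\chi\wedge\iota_\Feuler\exd$ over the spheres $\{|\xi|=\rho\}$ then gives $\langle d\chi,\iota_\Feuler\exd\rangle=\big(\int_0^\infty\psi'(\rho)\,d\rho\big)\int_{\mathbb S^{n-1}}\iota_\Feuler\exd=\pm\int_{\mathbb S^{n-1}}\iota_\Feuler\exd$; fixing the orientation of $\mathbb S^{n-1}$ consistently removes the sign ambiguity. (Equivalently one may phrase this as: $[\iota_\Feuler\exd]$ is a class in $H^{n-1}(\rr^n\setminus\{0\})\cong\rr$, and both quantities read off its coefficient.)

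For the third assertion, recall first that a scale-invariant $\exd$ is, in the normal-form coordinates of Proposition \ref{p:normalform}, homogeneous of degree $-n$ near $0$, and therefore admits some extension $\tilde\exd\in\pazocal{D}^{\prime,n}(\rr^n)$ (a finite-part regularization). Then $\Feuler\tilde\exd=d\iota_\Feuler\tilde\exd$ is a top-degree current supported at $\{0\}$, hence a finite sum $\sum_\alpha c_\alpha\,\partial^\alpha\delta_0\,d\xi$. In the normal-form chart $\Feuler=\sum_i\xi^i\partial_{\xi^i}$ acts on such currents diagonally, with eigenvalue $-|\alpha|$ on $\partial^\alpha\delta_0\,d\xi$; so its kernel is the line $\rr\,\delta_0\,d\xi$ and its image is the span of the $\partial^\alpha\delta_0\,d\xi$ with $|\alpha|\geqslant1$, i.e.~exactly $\{w:\langle w,1\rangle=0\}$ (consistently, $\langle\Feuler v,1\rangle=\langle d\iota_\Feuler v,1\rangle=0$ for every top-degree $v$). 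Hence there is $v$ supported at $0$ with $\Feuler v=-\Feuler\tilde\exd$ precisely when $\langle\Feuler\tilde\exd,1\rangle=0$, and $\langle\Feuler\tilde\exd,1\rangle=\langle d\iota_\Feuler\tilde\exd,1\rangle$ equals, up to sign, $\langle d\chi,\iota_\Feuler\tilde\exd\rangle=\langle d\chi,\iota_\Feuler\exd\rangle$ (using $\chi\equiv1$ near $0$ while $d\chi$ is supported where $\tilde\exd=\exd$). When it vanishes, $\overline\exd:=\tilde\exd+v$ is a scale-invariant extension; conversely if such an $\overline\exd$ exists then $d\iota_\Feuler\overline\exd=\Feuler\overline\exd=0$ on all of $\rr^n$, so $\iota_\Feuler\overline\exd$ is closed and $\langle d\chi,\iota_\Feuler\exd\rangle=\int_{\rr^n}d(\chi\,\iota_\Feuler\overline\exd)=0$. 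The equivalence with ``$\iota_\Feuler\overline\exd$ is closed on $\rr^n$'' is then immediate from the Lie--Cartan identity, $\overline\exd$ being automatically $d$-closed.

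The genuinely delicate point is the second assertion: one must really invoke the microlocal hypothesis to restrict the distributional current $\iota_\Feuler\exd$ to the spheres $S_\rho$ and to verify that the Stokes/coarea manipulations survive that restriction, and I expect a little care is needed with orientation conventions so the final identity comes out with the stated sign. The first and third assertions are, by contrast, essentially bookkeeping around Cartan's formula and the elementary diagonal action of $\Feuler$ on currents supported at a point.
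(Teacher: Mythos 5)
Your proof is correct and follows essentially the same route as the paper: Cartan's formula plus Stokes for the $\chi$-independence, the same microlocal observation (non-degeneracy of $Q$ forces $\wf(\exd)$ to miss the conormal of centred spheres) for the sphere identity, and Schwartz's structure theorem together with the diagonal action of $\Feuler$ on $\partial^\alpha\delta_0\,d\xi$ (eigenvalue $-|\alpha|$, kernel spanned by $\delta_0\,d\xi$) for the extension criterion. The only cosmetic difference is in the second assertion, where the paper pairs directly with $\partial\one_B=[\mathbb{S}^{n-1}]$ (the product $\one_B\,\exd$ being well-defined by the wavefront hypothesis) rather than running your slicing argument over the spheres $S_\rho$; both read off the same class in $H^{n-1}(\rr^n\setminus\{0\})$.
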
 

\begin{proof}
Since $\iota_\Feuler \exd$ is closed and $d\chi$ is exact the cohomological pairing 
$\left\langle d\chi, \iota_\Feuler \exd \right\rangle
$ does not depend on the choice of $\chi$. In fact, as a de Rham current $d\chi\in \pazocal{D}^{\prime,1}(\rr^n)$ lies in the same cohomology class as the current $[\mathbb{S}^{n-1}]\in \pazocal{D}^{\prime,1}(\rr^n)$ of integration on a sphere $\mathbb{S}^{n-1}$ enclosing $0$.   

If there is an extension $\overline{\exd}$ that satisfies $\Feuler \overline{\exd}=0$ in  
$\pazocal{D}^{\prime,n}(\rr^n)$, it means that the current $\iota_\Feuler \overline{\exd}$ is closed in $\pazocal{D}^{\prime,n-1}(\rr^n) $
since $ d\iota_\Feuler \overline{\exd}=\left(d\iota_\Feuler+\iota_\Feuler d \right)\overline{\exd}=\Feuler \overline{\exd}=0 $.
Then by integration by parts (sometimes called the Stokes theorem for de Rham currents),  
$\left\langle d\chi, \iota_\Feuler \exd \right\rangle=\left\langle d\chi, \iota_\Feuler \overline{\exd} \right\rangle=-\left\langle \chi, \Feuler \overline{\exd} \right\rangle= 0$
where we used the fact that $d\chi$ vanishes near $0$ and $\exd=\overline{\exd}$ in a neighborhood of the support of $d\chi$.

Conversely, assume the cohomological pairing vanishes: $\left\langle d\chi, \iota_\Feuler \exd \right\rangle=0
$. Let $\overline{\exd}$ be any extension of $\exd$. Then $\left\langle \chi, \Feuler \overline{\exd} \right\rangle=0
$ by integration by parts. But since $\exd=\overline{\exd}$ outside $0$ and $\Feuler \exd=0$ outside $0$, 
the current $\Feuler \overline{\exd}$ is supported at $0$ and by a classical theorem of Schwartz must have the form
$$\Feuler \overline{\exd}=\bigg( c_0\delta_{\{0\}}(\xi) + \sum_{1\leqslant\vert \alpha \vert\leqslant N} c_\alpha\partial_\xi^\alpha\delta_{\{0\}}(\xi) \bigg) d\xi^1\wedge \dots\wedge d\xi^n   $$ 
where all $\alpha$ are multi-indices and $N$ is the distributional 
order of the current. 
Since $\chi=1$ near $0$, it means 
$\left\langle \chi, \Feuler \overline{\exd} \right\rangle=0
=c_0\chi(0)=c_0=0$ hence the constant term vanishes.
This means that $\Feuler \overline{\exd}= \sum_{1\leqslant\vert \alpha \vert\leqslant N}c_\alpha\partial_\xi^\alpha\delta_{\{0\}}(\xi)  d\xi^1\wedge \dots\wedge d\xi^n$ and $\overline{\exd}- \sum_{1\leqslant\vert \alpha \vert\leqslant N}\frac{c_\alpha}{\vert\alpha\vert}\partial_\xi^\alpha\delta_{\{0\}}(\xi)  d\xi^1\wedge \dots\wedge d\xi^n$ extends $\exd$ and $\Feuler\left(\overline{\exd}- \sum_{1\leqslant\vert \alpha \vert\leqslant N}\frac{c_\alpha}{\vert\alpha\vert}\partial_\xi^\alpha\delta_{\{0\}}(\xi)  d\xi^1\wedge \dots\wedge d\xi^n \right)=0$.

When $\wf(\exd)\subset \{ (\xi,\tau dQ(\xi)) \st  Q(\xi)=0, \ \tau<0\}$ then $\wf(\exd)$ does not meet the conormal of $\mathbb{S}^{n-1}$
and therefore we can repeat the exact above discussion with the indicator function $\one_B$ of the unit ball $B$ playing the role of $\chi$,
since the distributional product $\one_B \exd$ is well-defined because $\wf(\one_B)+ \wf(\exd)$ never meets the zero section.
Then we obtain the residue from the identity $\partial \one_B=[\mathbb{S}^{n-1}]$ for currents
where $[\mathbb{S}^{n-1}]$ is the integration current on the sphere $\mathbb{S}^{n-1}$. 
\end{proof}

The quantity $\left\langle d\chi, \iota_\Feuler \exd \right\rangle=\left\langle [\mathbb{S}^{p-1}],[\iota_\Feuler \exd] \right\rangle$,
called \emph{residue} or \emph{residue pairing}, measures a cohomological obstruction to extend $\exd$ as a solution $\overline{\exd}$ solving $\Feuler \overline{\exd}=0$.
In fact, a slight modification of the previous proof  shows that there is always an
extension $\overline{\exd}$ which satisfies the linear PDE
$$
\boxed{\Feuler \overline{\exd}=\left\langle d\chi, \iota_\Feuler \exd \right\rangle \delta_{\{0\}}  d\xi^1\wedge \dots\wedge d\xi^n.} 
$$

We show  a useful vanishing {property} of  {certain}  residues.

\begin{coro}[{Residue vanishing}]\label{l:vanishing1}
{ Let $Q$ be a nondegenerate quadratic form on $\mathbb{R}^n$.}
Suppose $\exd\in \pazocal{D}^\prime(\mathbb{R}^n\setminus \{0\})$ is homogeneous of degree $-n+k>-n$ and $$\wf(\exd)\subset \{ (\xi,\tau dQ(\xi)) \st  Q(\xi)=0, \ \tau<0\}.$$ 
Then for every
multi-index $\beta$ such that $\vert\beta\vert=k>0$,   
$$
\int_{\mathbb{S}^n} \big(\partial_{\xi}^\beta \exd\big) \iota_\Feuler d\xi_1\dots d\xi_n =0. 
$$
\end{coro}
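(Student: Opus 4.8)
The strategy is to reduce the corollary to the homological obstruction formula of Proposition \ref{p:residue}. Set $\exd_\beta \defeq \partial_\xi^\beta \exd$. First I would record that $\exd_\beta$ is homogeneous of degree $-n+k-|\beta| = -n$ on $\rr^n\setminus\{0\}$, hence $\Feuler \exd_\beta + n\,\exd_\beta = 0$ when $\exd_\beta$ is viewed as a function; equivalently, writing $\exc_\beta \defeq \exd_\beta\, d\xi^1\wedge\cdots\wedge d\xi^n \in \pazocal{D}^{\prime,n}(\rr^n\setminus\{0\})$ for the associated top-degree current, the Lie derivative satisfies $\Feuler \exc_\beta = 0$, i.e.\ $\exc_\beta$ is scale-invariant and thus falls under the hypothesis \eqref{toextend} of Proposition \ref{p:residue}. (Here one uses that $\mathcal{L}_\Feuler(f\,d\xi^1\wedge\cdots\wedge d\xi^n) = (\Feuler f + n f)\,d\xi^1\wedge\cdots\wedge d\xi^n$ for the standard Euler field $\Feuler = \sum_i \xi^i\partial_{\xi^i}$, and more generally picks up the divergence of $\Feuler$, which for an Euler field equals $n$ along $0$ up to lower-order terms that do not affect the conclusion after passing to the relevant cohomology class.) Moreover the wavefront hypothesis on $\exd$ is preserved under $\partial_\xi^\beta$, since differentiation does not enlarge the wavefront set, so $\wf(\exc_\beta)\subset\{(\xi,\tau dQ(\xi)) : Q(\xi)=0,\ \tau<0\}$, which in particular misses the conormal bundle of $\mathbb{S}^{n-1}$. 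Hence, by the second assertion of Proposition \ref{p:residue}, the sphere integral in the corollary equals the cohomological residue pairing:
\[
\int_{\mathbb{S}^{n-1}} \iota_\Feuler \exc_\beta \;=\; \big\langle d\chi,\ \iota_\Feuler \exc_\beta\big\rangle
\]
for any cutoff $\chi\in C^\infty_\c(\rr^n)$ with $\chi\equiv 1$ near $0$.

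Next I would show this residue pairing vanishes because $\exc_\beta$ is an exact derivative. Since $|\beta|\geqslant 1$, pick an index $j$ with $\beta_j\geqslant 1$ and write $\beta = \beta' + e_j$, so that $\exd_\beta = \partial_{\xi^j}\exd_{\beta'}$ with $\exd_{\beta'}$ homogeneous of degree $-n + k - |\beta'| = -n+1$ on $\rr^n\setminus\{0\}$. Then on $\rr^n\setminus\{0\}$
\[
\exc_\beta \;=\; \partial_{\xi^j}\exd_{\beta'}\; d\xi^1\wedge\cdots\wedge d\xi^n \;=\; (-1)^{j-1}\, d\Big(\exd_{\beta'}\, d\xi^1\wedge\cdots\wedge\widehat{d\xi^j}\wedge\cdots\wedge d\xi^n\Big),
\]
so $\exc_\beta$ is exact as a current on $\rr^n\setminus\{0\}$. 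Applying the Cartan formula $d\iota_\Feuler + \iota_\Feuler d = \mathcal{L}_\Feuler$ to the primitive $(n-1)$-form $\omega_\beta \defeq (-1)^{j-1}\exd_{\beta'}\, d\xi^1\wedge\cdots\wedge\widehat{d\xi^j}\wedge\cdots\wedge d\xi^n$, and using $d\exc_\beta = 0$, one sees $\iota_\Feuler \exc_\beta = \iota_\Feuler d\omega_\beta = \mathcal{L}_\Feuler \omega_\beta - d\iota_\Feuler \omega_\beta$. The term $d\iota_\Feuler\omega_\beta$ is exact, hence pairs to zero against the exact form $d\chi$ (or, after the wavefront reduction, against $[\mathbb{S}^{n-1}]$). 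For the remaining term: $\omega_\beta$ is built from $\exd_{\beta'}$, which is homogeneous of degree $-n+1$ and is a coefficient of an $(n-1)$-form, so $\mathcal{L}_\Feuler\omega_\beta = (-n+1+(n-1))\omega_\beta = 0$. Therefore $\iota_\Feuler\exc_\beta$ is itself exact on $\rr^n\setminus\{0\}$, and its pairing with the closed current $d\chi$ (equivalently, its integral over $\mathbb{S}^{n-1}$) vanishes by Stokes' theorem for de Rham currents. This gives $\int_{\mathbb{S}^{n-1}}(\partial_\xi^\beta\exd)\,\iota_\Feuler d\xi^1\wedge\cdots\wedge d\xi^n = 0$, which is the claim.

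I expect the main obstacle to be the bookkeeping around Lie derivatives of the \emph{form} $\exc_\beta$ versus the scaling PDE satisfied by the \emph{coefficient} $\exd_\beta$ — in particular keeping track of the $+n$ shift (the divergence of $\Feuler$) so that the degree counting matches, and making sure the wavefront-set hypothesis is genuinely invoked to justify restricting currents to the sphere $\mathbb{S}^{n-1}$ (this is exactly the point where Proposition \ref{p:residue}'s second assertion, with $\one_B$ in place of $\chi$, is used). A secondary technical point is justifying that the product $\one_B\,\exc_\beta$ and the restriction to $\mathbb{S}^{n-1}$ make distributional sense: this is handled exactly as in the proof of Proposition \ref{p:residue}, since $\wf(\one_B) = N^*\mathbb{S}^{n-1}$ consists of conormals to the sphere, while $\wf(\exc_\beta)$ lies over the light cone $\{Q(\xi)=0\}$ in directions $\tau dQ(\xi)$ that are transverse to it (for $\xi\neq 0$), so the sum of wavefront sets avoids the zero section and Hörmander's criterion applies. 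Everything else is a routine application of the Cartan calculus and homogeneity counting.
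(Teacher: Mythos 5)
Your proof is correct, but it takes a genuinely different route from the paper's. The paper's argument first extends the \emph{undifferentiated} distribution: since the degree $-n+k>-n$ is non-critical, \cite[Thm.~3.2.3]{H} gives a unique homogeneous extension $\overline{\exd}\in\pazocal{S}^\prime(\rr^n)$ of degree $-n+k$; the commutation relation $[\Feuler,\partial_\xi^\beta]=-\vert\beta\vert$ then makes $\partial_\xi^\beta\overline{\exd}$ homogeneous of degree $-n$ on \emph{all} of $\rr^n$, so $\partial_\xi^\beta\overline{\exd}\,d^n\xi$ is a globally scale-invariant extension and the vanishing follows from the ``scale-invariant extension exists $\Rightarrow$ residue $=0$'' direction of Proposition \ref{p:residue}. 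You instead never leave $\rr^n\setminus\{0\}$: you peel off one derivative, note that the primitive $(n-1)$-form has total homogeneity $(-n+1)+(n-1)=0$, and conclude via Cartan's formula that $\iota_\Feuler\big(\partial_\xi^\beta\exd\,d\xi^1\wedge\cdots\wedge d\xi^n\big)$ is an exact current on the punctured space, hence pairs to zero against $d\chi$. Both proofs are cohomological and both use $k>0$ (the paper through the non-criticality of the degree, you through $\vert\beta\vert\geqslant 1$), but yours has the merit of avoiding the extension theorem altogether, at the price of a slightly longer computation. Two small points to state carefully in your version: (i) the final ``Stokes'' step is cleanest if phrased as the definition of $d$ on currents acting on the compactly supported test form $d\chi$, i.e.\ $\langle d(\iota_\Feuler\omega_\beta),d\chi\rangle=\pm\langle \iota_\Feuler\omega_\beta,dd\chi\rangle=0$ --- this sidesteps any issue with $\iota_\Feuler\omega_\beta$ not being compactly supported in $\rr^n\setminus\{0\}$; (ii) the divergence bookkeeping in your first paragraph is only clean because $\Feuler=\sum_i\xi^i\partial_{\xi^i}$ here, which is indeed the case in which the corollary is applied. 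The wavefront step (second assertion of Proposition \ref{p:residue}, used to replace $d\chi$ by $[\mathbb{S}^{n-1}]$) is invoked correctly, since $\partial_\xi^\beta$ does not enlarge $\wf(\exd)$.
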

\begin{proof}
Let $\one_B$ be the indicator function of the unit ball $B$. We denote by $\overline{\exd}$, the unique distributional extension of $\exd\in \pazocal{D}^\prime(\mathbb{R}^n\setminus \{0\})$ in $\pazocal{S}^\prime(\mathbb{R}^n)$ which is homogeneous of degree $-n+k$  by~\cite[Thm.~3.2.3, p.~75]{H}. 
Therefore using the commutation relation $[\Feuler,\partial_\xi^\beta]=-\vert\beta\vert=-k$ yields immediately
that $\partial_\xi^\beta \overline{\exd}$ is a distribution
homogeneous of degree $-n$ and thus $\Feuler \left(\partial_\xi^\beta \overline{\exd}\,d^n\xi\right)=0$.
Then, by Proposition~\ref{p:residue}, the residue equals
$$ \int_{\mathbb{S}^{n-1}} \big(\partial_{\xi}^\beta \exd\big) \iota_\Feuler d\xi_1\dots d\xi_n=\int_{\mathbb{R}^n} \left(\partial \one_B\right) \iota_\Feuler \partial_\xi^\beta \overline{\exd}\,d^n\xi=0, $$
where the pairing is well-defined since $N^*(\mathbb{S}^{n-1})\cap \wf(\exd)=\emptyset$.
\end{proof}

\begin{remark}[Residue as scaling anomaly]\label{l:scalanomal}
Let $\exd\in \pazocal{D}^{\prime,n}(\rr^n\setminus\{0\})$ be a current of top degree, homogeneous of degree $0$ with respect to scaling and denote by $\overline{\exd}\in \pazocal{D}^{\prime,n}(\rr^n)$ its unique distributional extension of order $0$.
Denote by $\big(\pazocal{F}^{-1}\exd\big)(h)=\frac{1}{(2\pi)^n}\left\langle \overline{\exd}, e^{i\left\langle h, . \right\rangle}\right\rangle\in \pazocal{S}^\prime(\rr^n)$ its inverse Fourier transform.

Then the tempered distribution $\pazocal{F}^{-1}\exd$ satisfies the equations:
\begin{eqnarray*}
\pazocal{F}^{-1}\exd(\lambda.)=\pazocal{F}^{-1}\exd(.)+c\log\lambda,\\
\euler \pazocal{F}^{-1}\exd=c,
\end{eqnarray*} 
where $\euler=\sum_{i=1}^n h^i\partial_{h^i}$ is the Euler vector field in position space and $c=\frac{1}{(2\pi)^n}\int_{\rr^n} d\chi\wedge \iota_\Feuler \exd$ is the residue. Therefore, residues defined as homological obstructions also arise as \emph{scaling anomalies}.
\end{remark} 

   This interpretation of residues as scaling anomalies have appeared in the first author's  thesis \cite[\S8]{dangthesis} as well as in the  physics literature on renormalization in Quantum Field Theory in the Epstein--Glaser approach  \cite{Nikolov2013,Gracia-Bondia2014,Rejzner2020}.

\subsection{Dynamical definition of residue} After this motivation, we come back to the setting of an Euler vector field $\euler$ acting on a neighborhood of the diagonal $\Delta\subset M\times M$.

As we will explain, our approach to the Wodzicki residue uses scalings with Euler vector fields and a diagonal restriction.
Let $\iota_\Delta: x\mapsto (x,x)\in \Delta\subset M\times M$
denote the diagonal embedding.
We are  ready to formulate  our main definition. 
\begin{defi}[Dynamical residue]
Let  ${\pazocal{K}}\in \pazocal{D}^\prime_\Gamma(\pazocal{U})$ be a tame log-polyhomogeneous  distribution on some neighborhood $\pazocal{U}$ of the diagonal $\Delta\subset M\times M$
and suppose  $\Gamma|_{\Delta}\subset N^*\Delta$.
For any Euler vector field $\euler$, let $\Pi_0$
be the corresponding spectral projector on the resonance $0$, see Definition \ref{def:pi}.  We define  
the \emph{dynamical residue of ${\pazocal{K}}$}  as: 
$$
\resdyn {\pazocal{K}} = \iota_\Delta^*\big(\euler (  \Pi_0( {\pazocal{K}}) )\big) \in C^\infty(M),
$$
provided that the pull-back is well-defined.
\end{defi}

A priori, the dynamical residue can depend on the choice of Euler vector field $\euler$ and  it is not obvious that one can pull-back the distribution $\euler (\Pi_0({\pazocal{K}}))$ by the diagonal embedding. We need therefore to examine the definition carefully for classes of Schwartz kernels that are relevant for complex powers of differential operators.


\section{Equivalence of definitions  in pseudodifferential case}\label{ss:wodzickipdo}

\subsection{Log-polyhomogeneity of pseudodifferential operators}


In this section, $M$ is a smooth manifold of arbitrary dimension. 

{We denote by $\vert \Lambda^{\top}M \vert$ the space of smooth densities
on $M$. For any operator $A:C^\infty_\c(M)\to \cD^\prime(M)$, recall that the corresponding Schwartz kernel is a distribution on $M\times M$ twisted by some smooth density. More precisely, the kernel  of $A$ belongs to $\cD^\prime(M\times M)\otimes \pi_2^*\vert \Lambda^{\top}M \vert$ where $\pi_2$ is the projection
on the second factor and reads ${\pazocal{K}}(x,y)\dvol_g(y)$ where ${\pazocal{K}}\in \cD^\prime(M\times M)$ and $\dvol_g\in \vert \Lambda^{\top}M \vert$~\footnote{ In fact, $Au=\int_{y\in M} {\pazocal{K}}(.,y)u(y)\dvol_g(y)$ $\forall u\in C^\infty_\c(M)$. Neither ${\pazocal{K}}\in \cD^\prime(M\times M)$ nor $\dvol_g\in \vert \Lambda^{\top}M \vert$ are intrinsic, but their product is.}.}

{ In this part, we need to fix a density $\dvol_g\in \vert \Lambda^{\top}M \vert$ on our manifold $M$ since given a linear continuous operator from $C^\infty_\c(M)\to \pazocal{D}^\prime(M)$, its Schwartz kernel $ \pazocal{K}$ and hence its dynamical residue depends on the choice of density. However, we will see in the sequel that the product: (dynamical residue $\times$ density) $\in \vert \Lambda^{\top}M \vert$ does not depend on the choice of density.}

We first prove that pseudodifferential kernels
are tame log-polyhomogeneous with respect to \emph{any} Euler vector field $\euler$.
\begin{prop}\label{p:pseudopoly}
Let  ${\pazocal{K}}(.,.)\pi_2^*\dvol_g\in \pazocal{D}^\prime_{N^*\Delta}(M\times M)\otimes \pi_2^*\vert \Lambda^{\top}M \vert$ be the kernel of a classical pseudodifferential operator $A\in \Psi^\cv_{\ph}(M)$, {$\cv\in \mathbb{C}$}. 
Then for every Euler vector field $\euler$, there exists an $\euler$-stable neighborhood of the diagonal $\pazocal{U}$ such that ${\pazocal{K}}$ is tame log-polyhomogeneous w.r.t.~$\euler$.

In particular,
{
$$
\mathfrak{L}_s{\pazocal{K}} { := \int_0^\infty \left(e^{-t(\euler+s)}{\pazocal{K}}\right) dt \in \pazocal{D}^\prime_{N^*\Delta}(\pazocal{U})}
$$}
is a well-defined conormal distribution and extends as a {meromorphic function} of $s\in \mathbb{C}$
with poles at $s\in \cv+n-\mathbb{N}$.

{ If $\cv\geqslant -n$ is an integer,
the poles at $s=k$ are simple when $k<0$ and of multiplicity at most $2$ when
$k\geqslant 0$. If $\cv\in \mathbb{C}\setminus \clopen{-n,+\infty}\cap \mathbb{Z} $ then all poles are simple and $\Pi_0(\pazocal{K})=0$.}
\end{prop}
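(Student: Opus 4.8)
The plan is to establish the polyhomogeneous expansion of $\pazocal{K}$ under $e^{-t\euler}$ directly from the oscillatory integral representation of a classical symbol, exploiting the freedom to choose a convenient coordinate frame adapted to $\euler$. First I would use Proposition~\ref{p:normalform}: near any $p\in\Delta$ pick coordinates $(x^i,h^i)$ with $\Delta=\{h=0\}$ and $\euler=\sum_i h^i\partial_{h^i}$. The subtlety is that the symbol/amplitude of $A$ is written using \emph{some} phase adapted to a coordinate chart, and a priori that chart need not be the one flattening $\euler$; this is exactly where the Kuranishi trick enters (as announced in the Summary). So the first real step is: writing $\pazocal{K}(x,y)=(2\pi)^{-n}\int e^{i(x-y)\cdot\xi}a(x,\xi)\,d\xi$ with $a$ classical of order $\cv$, apply a change of variables (Kuranishi) so that the phase becomes $\langle h,\xi\rangle$ in the $\euler$-adapted frame, at the cost of replacing $a$ by another classical symbol of the same order with an extra smooth Jacobian factor; modulo $C^\infty$ (which contributes nothing to the expansion near $\Delta$), $\pazocal{K}$ is a finite sum of oscillatory integrals $(2\pi)^{-n}\int e^{i\langle h,\xi\rangle}a_j(x,\xi)\,d\xi$ with $a_j$ classical.

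Next I would compute $e^{-t\euler}\pazocal{K}$ in this model. Since $\euler=\sum h^i\partial_{h^i}$, the flow is $e^{-t\euler}:(x,h)\mapsto(x,e^{-t}h)$, and substituting $\xi\mapsto e^{t}\xi$ in the integral gives $(e^{-t\euler}\pazocal{K})(x,h)=(2\pi)^{-n}e^{nt}\int e^{i\langle h,\xi\rangle}a_j(x,e^{t}\xi)\,d\xi$. Now insert the one-step polyhomogeneous expansion $a_j(x,\xi)\sim\sum_{k\geqslant0}a_{j,\cv-k}(x,\xi)$ with $a_{j,\cv-k}$ homogeneous of degree $\cv-k$ in $\xi$ for $|\xi|\geqslant1$. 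Each homogeneous piece scales as $a_{j,\cv-k}(x,e^{t}\xi)=e^{t(\cv-k)}a_{j,\cv-k}(x,\xi)$, so the corresponding term in $e^{-t\euler}\pazocal{K}$ picks up $e^{t(n+\cv-k)}$, i.e.\ contributes a resonance at $s=\cv+n-k$, which gives exactly the claimed pole locations $s\in\cv+n-\nn$. The Fourier transform in $\xi$ of a homogeneous-of-degree-$m$ symbol is a distribution on $\rr^n_h$ that is homogeneous of degree $-n-m$ away from $h=0$; its behaviour under $e^{-t\euler}$-scaling near $h=0$ requires Hörmander's meromorphic-extension / homogeneous-extension theorem (\cite[Thm.~3.2.3/3.2.4]{H}): when $-n-m\notin-n-\nn$, i.e.\ $m\notin\nn_0$ shifted appropriately, the homogeneous extension across $h=0$ is unique and the term is genuinely homogeneous (no $\log$, simple pole); when $m$ is a non-negative integer shifted so that $-n-m\in\{-n,-n-1,\dots\}$, the extension may fail, producing a $\log|h|$ term and hence a double pole. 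Translating: with $\cv\geqslant-n$ an integer, the resonance $s=k$ arises from $m=k-n$ with $k=\cv+n-(\text{term index})\geqslant0$ potentially hitting the obstruction, giving multiplicity $\leqslant2$, while $k<0$ (i.e.\ $m<-n$) is a locally integrable homogeneous distribution with a unique homogeneous extension — simple pole. This is where the tame log-polyhomogeneous structure comes from, matching the definition in the excerpt.

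Finally, for the case $\cv\in(\cc\setminus\zz)$ (or more precisely $\cv\notin\clopen{-n,+\infty}\cap\zz$), every resonance $s=\cv+n-k$ has $\cv+n-k\notin\zz$, so the homogeneity degree $m=\cv-k$ is never a non-negative integer, Hörmander's extension is always unique, and all poles are simple with no logarithmic mixing; in particular the resonance at $s=0$ is absent entirely (since $0\notin\cv+n-\nn$), so $\Pi_0(\pazocal{K})=0$ by Definition~\ref{def:pi}. To conclude globally I would patch the local statements with a partition of unity subordinate to a cover of $\Delta$ by charts, using Lemma~\ref{l:stable} to get a single $\euler$-stable neighborhood $\pazocal{U}$, and note that the remainder symbols (order $\cv-N$ with $N$ large) contribute terms with $\pazocal{O}_{\pazocal{D}'_{N^*\Delta}}(e^{-t(N'+1-\varepsilon)})$ decay, controlling the remainder in \eqref{phexp}; the wavefront-set bookkeeping is straightforward since all the distributions involved are conormal to $\Delta$. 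I expect the main obstacle to be the careful handling of the Kuranishi change of variables together with the uniform control of the symbol expansion and its remainders under the $\xi\mapsto e^{t}\xi$ rescaling — in particular verifying that the remainder estimates survive in the topology of $\pazocal{D}'_{N^*\Delta}(\pazocal{U})$ and that the logarithmic terms appear with exactly rank-$2$ (not higher) Jordan blocks, which is the content of tameness.
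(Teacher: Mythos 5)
Your overall strategy coincides with the paper's: normal form coordinates for $\euler$, the Kuranishi trick to turn the phase into $\langle h,\xi\rangle$, insertion of the one-step polyhomogeneous expansion, and extraction of the resonances from the substitution $\xi\mapsto e^{t}\xi$, with the remainder controlled by its H\"older regularity. One minor omission: after the Kuranishi change of variables the amplitude is $a\big(x;{}^t M(x,h)^{-1}\xi\big)\,|M(x,h)|^{-1}$ and still depends on $h$; passing to a symbol $a_j(x,\xi)$ depending on $x$ alone requires the standard amplitude-to-symbol reduction and a re-expansion of the result into homogeneous components (the paper's Step 3, via Shubin). You assert this reduction implicitly; it is routine but should be stated, since the clean scaling computation you perform only works once the $h$-dependence of the amplitude has been removed.

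The serious issue is that you place the logarithmic obstruction on the wrong side of the Fourier transform, and your criterion for which resonances carry rank-$2$ Jordan blocks comes out wrong. The extension problem is for the homogeneous symbol $a_{j,\cv-k}(x,\cdot)$ at $\xi=0$: by H\"ormander's Theorems 3.2.3/3.2.4 the extension across $\xi=0$ is unique and homogeneous precisely when the \emph{symbol} degree satisfies $\cv-k\notin\{-n,-n-1,\dots\}$; when $\cv-k\leqslant -n$ the extension is only quasihomogeneous, and its inverse Fourier transform is a position-space term of degree $k-n-\cv\geqslant 0$ of the form $h^\beta(c_1+c_2\log|h|)$. The $\log$ is already present for $h\neq 0$; it is not produced by extending a homogeneous distribution across $h=0$, and your premise that the inverse Fourier transform of a degree-$m$ homogeneous symbol ``is homogeneous of degree $-n-m$ away from $h=0$'' fails exactly in these cases. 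Your criterion $-n-m\in\{-n,-n-1,\dots\}$, i.e.\ $m\in\{0,1,2,\dots\}$, would attach the logarithms to the \emph{most singular} terms of the kernel, which is false: for an order-$0$ classical operator the leading singularity is the Fourier transform of the (unique, homogeneous) extension of a degree-$0$ symbol and is genuinely homogeneous of degree $-n$ (a simple resonance), while the first $\log|h|$ appears at position-space degree $0$, coming from the symbol component of degree $-n$. The correct bookkeeping — which is what the definition of tame log-polyhomogeneity encodes and what the paper's proof establishes — is that the Jordan blocks sit on the decaying resonances $e^{-tk}$, $k\geqslant 0$ (position-space degrees $\geqslant 0$, symbol degrees $\leqslant -n$), giving poles of $\mathfrak{L}_s$ of order at most $2$ there and simple poles on the growing resonances. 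Your conclusion for $\cv\in\cc\setminus\zz$ (all poles simple, $\Pi_0(\pazocal{K})=0$) survives, since no symbol degree then hits the critical set, but the integer case as you argue it would not yield the claimed tame structure.
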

{In the proof, we make a crucial use of the Kuranishi trick, which allows us to represent a pseudodifferential kernel in normal form coordinates for a given Euler vector field $X$.
Concretely, in local coordinates, the phase term used to represent the pseudodifferential kernel as an oscillatory integral reads $e^{i\left\langle\xi,x-y \right\rangle}$, yet we would like to write it in the form $e^{i\left\langle\xi,h\right\rangle}$ where $X=\sum_{i=1}^nh^i\partial_{h^i}$. We also need to study how the symbol transforms in these normal form coordinates and to verify that it is still polyhomogeneous in the momentum variable $\xi$. 
Our proof can be essentially seen as a revisited version of the theorem of change of variables for pseudodifferential operators combined with scaling of  polyhomogeneous symbols.}

\begin{refproof}{Proposition \ref{p:pseudopoly}}
\step{1} Outside the diagonal the 
Schwartz kernel ${\pazocal{K}}$ is smooth, hence for any test form $\chi_1\in C^\infty_{\rm c}(M\times M\setminus \Delta)$ and any smooth function
$\psi\in C^\infty(M\times M)$ supported away from the diagonal,
$$\left\langle { e^{-t\euler}} ( {\pazocal{K}} \psi) ,\chi\right\rangle=\pazocal{O}((e^{-t})^{+\infty}).$$ 
This shows we only need to prove the tame log-polyhomogeneity for a localized version of the kernel near the diagonal $\Delta\subset M\times M$.

\step{2} Then, by partition of unity, it suffices to prove the claim on sets 
of the form $U\times U\subset M\times M$. By the results in~\cite{Lesch}, in a local chart $\kappa^2:U\times U\to \kappa(U)\times \kappa(U) $ with linear coordinates $(x,y)=(x^i,y^i)_{i=1}^n$, the pseudodifferential kernel reads:
\begin{eqnarray*}
{\left(\kappa^2_*\pazocal{K}\right)}(x,x-y)=\frac{1}{(2\pi)^n}\int_{\xi\in \mathbb{R}^n} e^{i\left\langle \xi,x-y\right\rangle} \asigma(x;\xi) d^n\xi \in C^\infty(\kappa(U)\times \mathbb{R}^n\setminus\{0\})
\end{eqnarray*}
where $\asigma(x;\xi)\sim \sum_{k=0}^{+\infty} \asigma_{\cv-k}(x;\xi)$ and 
$\asigma_k\in C^\infty(\kappa(U) \times \mathbb{R}^n\setminus\{0\} ) $ 
satisfies  $\asigma_k(x;\lambda\xi)=\lambda^k\asigma(x;\xi)$, $\lambda>0$ for $\vert \xi\vert>0$.
{The normal form in Proposition ~\ref{p:normalform} yields the existence
of}
coordinate functions $(x^i,h^i)_{i=1}^n$, where $(x^i)_{i=1}^n$ are the initial linear coordinates, such that
$\kappa^2_*\euler=\sum_{i=1}^n h^i\partial_{h^i}$. We also view the coordinates $(h^i)_{i=1}^n$ as \emph{coordinate functions} $\left(h^i(x,y)\right)_{i=1}^n$
on $\kappa^2(U \times U)$, we also use the short notation $h(x,y)=(h^i(x,y))_{i=1}^n\in C^\infty(\kappa(U)^2,\mathbb{R}^n)$.
By the Kuranishi trick, the kernel
$\kappa^2_*{\pazocal{K}}$ can be rewritten as
\begin{eqnarray*}
\kappa^2_*{\pazocal{K}}(x,x-y)=\frac{1}{(2\pi)^n}\int_{\xi\in \mathbb{R}^n} e^{i\left\langle \xi,h(x,y)\right\rangle} \asigma(x; \t M(x,y)^{-1} \xi) \module{M(x,y)}^{-1}d^n\xi \\ \in C^\infty(\kappa(U)\times \mathbb{R}^n\setminus\{0\})
\end{eqnarray*}
where $\module{M(x,y)}=\det{M(x,y)}$, and the matrix $M\in C^\infty(\kappa(U)^2,\GL_n(\mathbb{R}))$ satisfies $M(x,x)=\id$, $x-y=M(x,y)h(x,y)$. Since $(x^i,y^i)_{i=1}^n$ and $(x^i,h^i)_{i=1}^n$ are both coordinates systems in $\kappa(U)\times \kappa(U)$, we can view $(x-y)=(x^i-y^i)_{i=1}^n(.,.)\in C^\infty(\kappa(U)\times \mathbb{R}^n,\mathbb{R}^n)$
as a smooth function of $(x,h)\in  \kappa(U)\times \mathbb{R}^n$
and  $M^{-1}(x,h)$ can be expressed
as an integral: 
$$
M^{-1}(x,h)=\int_0^1  d(x-y)|_{(x,th)} dt.
$$

\step{3} We need to eliminate the dependence in the $h$ variable in the symbol $A(x,y;\xi)= \asigma(x; \t M(x,y)^{-1} \xi)\module{M(x,y)}^{-1}$
keeping in mind this symbol has the polyhomogeneous expansion in the $\xi$ variable
$$
A(x,y;\xi)\sim \sum_{k=0}^{+\infty}\asigma_{\cv-k}(x; \t M(x,y)^{-1} \xi)\module{M(x,y)}^{-1}.
$$
By~\cite[Thm.~3.1]{shubin}, if we set $A(x,y;\xi)=\asigma(x; \t M(x,y)^{-1}\xi)\module{M(x,y)}^{-1}$, 
then:
\begin{eqnarray*}
A(x,y;\xi)\sim {\sum_\beta} \frac{i^{-\vert \beta\vert}}{\beta !} \partial_\xi^\beta \partial_y^\beta A(x,y;\xi) |_{x=y}
\end{eqnarray*} 
which implies that if we set $A_{\cv-k}(x,y;\xi)=\asigma_{\cv-k}(x; \t M(x,y)^{-1} \xi)\module{M(x,y)}^{-1}$, we get
the polyhomogeneous asymptotic expansion:
\begin{eqnarray}\label{e:exp1referee}
A(x,y;\xi)\sim \sum_{p=0}^{+\infty} {\sum_{\vert \beta\vert+k=p } \frac{i^{-\vert \beta\vert}}{\beta !} \partial_\xi^\beta \partial_y^\beta  A_{\cv-k}(x,y;\xi) |_{x=y} }
\end{eqnarray}
where in the sum over $p$, each term is homogeneous of degree $\cv-p$ w.r.t.~scaling in the variable $\xi$.
At this step, 
we obtain a representation of the form
\begin{eqnarray}\label{e:exp2referee}
{\left(\kappa^2_*\pazocal{K}\right)}(x,x-y)=\frac{1}{(2\pi)^n}\int_{\xi\in \mathbb{R}^n} e^{i\left\langle \xi,h(x,y)\right\rangle} \tilde{\asigma}(x;  \xi) d^n\xi \in C^\infty(\kappa(U)\times \mathbb{R}^n\setminus\{0\})
\end{eqnarray}
where $\tilde{\asigma}\in C^\infty(\kappa(U)\times \mathbb{R}^n)$ is a polyhomogeneous symbol.

\step{4} { It is at this particular step that we start to carefully distinguish between the cases $\alpha\in\mathbb{C}\setminus(\clopen{-n,+\infty}\cap \mathbb{Z})) $, which is in a certain sense easier to handle, and the case where $\alpha$ is an integer such that $\alpha\geqslant -n$.} {Up to a modification of ${\pazocal{K}}$} with a smoothing operator, we can always assume that $\tilde{\asigma}$ is smooth in $\xi$ and supported in $\vert \xi\vert\geqslant 1$. 
For every $N$, let us decompose
$$\tilde{\asigma}(x;\xi)=\sum_{k=0}^N  \tilde{\asigma}_{\cv-k}(x;\xi)+R_{\cv-N}(x;\xi) $$
where the behaviour  of the summands can be summarized as follows:
\ben
\item $R_{\cv-N}\in C^\infty\left(\kappa(U)\times \mathbb{R}^n\setminus\{0\} \right)$ and satisfies the estimate 
$$ \forall \xi\,  \text{ s.t. } \vert\xi\vert\geqslant 1, \ \forall x\in \kappa(U),\ \vert \partial_\xi^\beta R_{\cv-N}(x;\xi) \vert\leqslant C_{\cv-N,\beta} \vert\xi\vert^{{\cv-N-\beta} },  $$
and $R_{\cv-N}(x;.) $ extends as a distribution in $\kappa(U)\times \mathbb{R}^n $
of order $ N- \cv-n +1$  by \cite[Thm.~1.8]{DangAHP} since $R_{\cv-N}(x;.) $  satisfies the required weak homogeneity assumption.
\item If $\cv-k>-n$, then the symbol $ \tilde{\asigma}_{\cv-k}\in  C^\infty\left(\kappa(U)\times \mathbb{R}^n\setminus\{0\} \right)$ is homogeneous of degree $\cv-k $
and extends uniquely as a tempered distribution in $\xi$ homogeneous of degree $\cv-k$ by~\cite[Thm.~3.2.3]{H}.
\item { If $\cv-k\leqslant -n$ and
 $\cv\in\mathbb{C}\setminus(\clopen{-n,+\infty}\cap \mathbb{Z})$, 
then observe that $\cv-k\in\mathbb{C}\setminus(\clopen{-n,+\infty}\cap \mathbb{Z}) $ hence
$ \tilde{\asigma}_{\cv-k}\in  C^\infty\left(\kappa(U)\times \mathbb{R}^n\setminus\{0\} \right)$ is homogeneous of degree $\cv-k $ in $\xi$
and extends \emph{uniquely} as a tempered distribution in $\xi$ homogeneous of degree $\cv-k$ by~\cite[Thm.~3.2.4]{H}. If $\cv-k\leqslant -n$ and
$\cv\geqslant -n$ is an integer,}
then $ \tilde{\asigma}_{\cv-k}\in  C^\infty\left(\kappa(U)\times \mathbb{R}^n\setminus\{0\} \right)$ is homogeneous of degree $\cv-k $ in $\xi$
and extends \emph{non--uniquely} as a tempered distribution in $\xi$ quasihomogeneous of degree $\cv-k$ by~\cite[Thm.~3.2.4]{H}.
There are Jordan blocks in the scaling (see~\cite[(3.2.24)$^\prime$]{H}), in the sense that  we can choose the distributional extension in $C^\infty(\kappa(U),\pazocal{S}^\prime(\mathbb{R}^n))$ in such a way that:
$$ \left(\xi_i\partial_{\xi_i}-\cv+k\right)  \tilde{\asigma}_{\cv-k}= \sum_{\vert \beta\vert=k-\cv-n} C_\beta(x) \partial_\xi^\beta\delta^{\mathbb{R}^n}_{\{0\}}(\xi) .  $$
\een

\step{5} We now study the consequences of the above representation in position space. { If $\cv\geqslant -n$ is an integer then  we} have
$$
\bea
\frac{1}{(2\pi)^n}\int_{\xi\in \mathbb{R}^n} e^{i\left\langle \xi,h\right\rangle} \tilde{\asigma}(x;\xi)d^n\xi &=
\sum_{k=0}^{\cv+n-1} T_{ n+\cv-k}(x,h) + \sum_{k=\cv+n}^{N} T_{n+\cv-k}(x,h)  \fantom +
\frac{1}{(2\pi)^n}\int_{\xi\in \mathbb{R}^n} e^{i\left\langle \xi,h\right\rangle}R_{\cv-N}(x;\xi)d^n\xi,
\eea
$$
where 
$$
T_{n+\cv-k}(x,h)=\frac{1}{(2\pi)^n}\int_{\xi\in \mathbb{R}^n} e^{i\left\langle \xi,h\right\rangle} \tilde{\asigma}_{\cv-k}(x;\xi)d^n\xi.
$$
It follows that by inverse Fourier transform, when $\cv-k>-n $, $ T_{n+\cv-k}(x,.)$ is tempered in the variable $h$ and is homogeneous in the sense of tempered distributions:
$$\forall \lambda>0,  \quad T_{n+\cv-k}(x,\lambda h)=\lambda^{k-n-\cv}T_{n+\cv-k}(x,h).$$

When $\cv-k\leqslant -n $, the distribution $T_{n+\cv-k}$ is quasihomogeneous in the variable $h$, i.e., when we scale with any $\lambda>0$ w.r.t.~$h$  there is a $\log\lambda$ which appears in factor:
$$\left\langle T_{n+\cv-k}(x,\lambda.),\varphi\right\rangle=\lambda^{n- \cv+k} \left\langle T_{n+\cv-k}(x,.),\varphi\right\rangle+\lambda^{n- \cv+k}\log\lambda \left\langle (\euler-\cv+k) T_{n+\cv-k}(x,.),\varphi\right\rangle  .$$
Observe that the remainder term 
reads:
\begin{eqnarray*}
\frac{1}{(2\pi)^n}\int_{\xi\in \mathbb{R}^n} e^{i\left\langle \xi,h\right\rangle}R_{\cv-N}(x;\xi)d^n\xi
\end{eqnarray*}
which belongs to $\pazocal{C}^{N-\cv-n}$ since for $\chi\in C^\infty_\c(\mathbb{R}^n)$, $\chi=1$ near $\xi=0$, we get:
$$ \vert (1-\chi)(\xi) R_{\cv-N}(x;\xi) \vert\leqslant C_{\cv-N}(1+\vert \xi\vert)^{\cv-N}$$ 
which implies that $\int_{\xi\in \mathbb{R}^n} e^{i\left\langle \xi,h\right\rangle}(1-\chi)(\xi)R_{\cv-N}(x;\xi)d^n\xi\in \pazocal{C}^{N-\cv-n}$ by  \cite[Lem.~D.2]{Dang2020} and we can also  observe that $\int_{\xi\in \mathbb{R}^n} e^{i\left\langle \xi,h\right\rangle}\chi(\xi)R_{\cv-N}(x;\xi)d^n\xi$ is analytic in $h$ by the Paley--Wiener theorem.
{ If $\cv\in\mathbb{C}\setminus(\clopen{-n,+\infty}\cap \mathbb{Z})$, then we have a simpler decomposition
$$ \bea
\frac{1}{(2\pi)^n}\int_{\xi\in \mathbb{R}^n} e^{i\left\langle \xi,h\right\rangle} \tilde{\asigma}(x;\xi)d^n\xi &=
\sum_{k=0}^{N} T_{ n+\cv-k}(x,h) +
\frac{1}{(2\pi)^n}\int_{\xi\in \mathbb{R}^n} e^{i\left\langle \xi,h\right\rangle}R_{\cv-N}(x;\xi)d^n\xi,
\eea$$
where each $T_{ n+\cv-k}(x,h)$ is smooth in $x$ and a tempered distribution in $h$ homogeneous of degree $n+\cv-k$ (there are no logarithmic terms).
}

\step{6}  Observe that in the new coordinates $(x,h)$, 
the scaling with respect to $\euler$ takes the simple form
$ {\left( e^{-t\euler}f\right)} (x,h) = f(x,e^{-t} h)$ for smooth functions $f$.
So the provisional conclusion { for integer $\cv\geqslant -n$} is that when we scale w.r.t.~the Euler vector field, we get an asymptotic expansion 
in terms of conormal distributions: 
$$ \bea
{ e^{-t\euler}}{ \pazocal{K}}&=\sum_{k=0}^{\cv+n-1} {e^{-(n+\cv-k)t}} T_{n+\cv-k}+T_0+t{ \left( \euler T_0\right)}  \fantom +\sum_{k=\cv+n }^N 
{e^{-(n+\cv-k)t}} \left(T_{n+\cv-k}+t  (\euler -(k-\cv-n) ) T_{n+\cv-k} \right) + R(x,e^{-t}h)
\eea
$$
where $C_0=\euler T_0$ and the remainder term $R$ is a H\"older function of regularity $\pazocal{C}^{N-\cv-n}$ so it has a Taylor expansion up to order $N-\cv-n$. 
By taking the Laplace transform in the variable $t$, for any test form $\chi$, we find that the dynamical correlator 
$$ \int_0^\infty e^{-ts} \left\langle { \left(e^{-t\euler} \pazocal{K}\right)},\chi \right\rangle dt  $$
admits an analytic continuation
to a meromorphic function on  $\mathbb{C}\setminus \{n-\cv,\dots ,0 ,-1,\dots \}$
with simple poles at $\{n-\cv,\dots ,1 \}$ and poles of order at most $2$
at the points $\{0,-1,\dots, \}$.
We have a Laurent series expansion of the form:
$$\bea
 \int_0^\infty e^{-ts} { \left(e^{-t\euler} \pazocal{K}\right)} dt&=\sum_{k=0}^{\cv+n-1} \frac{ T_{n+\cv-k}}{s+k-\cv-n}+ \frac{T_0}{s}+\frac{\euler T_0}{s^2}  
 \fantom +\sum_{k=\cv+n }^N \frac{ T_{n+\cv-k}}{s+k-\cv-n}+ \frac{(\euler-k+\cv+n)T_{n+\cv-k}}{(s+k-\cv-n)^2 } \fantom + \int_0^\infty e^{-ts}  R(x,e^{-t}h) dt,
\eea
$$
where the term $\int_0^\infty e^{-ts} R(x,e^{-t}h)dt$ is holomorphic on the half-plane $\Re s >0$ and meromorphic
on the half-plane $\Re s > \cv+n-N$ due to the H\"older regularity $R\in \pazocal{C}^{N-\cv-n}$.

{ If $\cv\in\mathbb{C}\setminus(\clopen{-n,+\infty}\cap \mathbb{Z})$, then the above discussion much simplifies  because of the absence of logarithmic mixing and we find that $\int_0^\infty e^{-ts}  \left( e^{-t\euler}\pazocal{K}\right) dt$ extends as a meromorphic function with only simple poles at $n-\cv, n-1-\cv,\dots,$ and therefore $0$ is not a pole of $\mathfrak{L}_s\pazocal{K}$. It means that $\Pi_0(\pazocal{K})=0$ when $\cv\in\mathbb{C}\setminus(\clopen{-n,+\infty}\cap \mathbb{Z})$.}
\end{refproof}

\subsection{Dynamical residue equals Wodzicki residue in pseudodifferential case}\label{ss:wodzicki}

The log-polyhomogeneity of pseudodifferential Schwartz kernels ensures that their dynamical residue is well-defined. Our next objective is to show that it coincides with the Guillemin--Wodzicki residue. 

More precisely, if $\Psi^m_\ph(M)$ is the class of classical pseudodifferential operators of order $m$, we are interested in  the \emph{Guillemin--Wodzicki residue density} of $A\in\Psi^m_\ph(M)$, which can be defined at any $x\in M$ as follows. In a local coordinate chart $\kappa:U\mapsto \kappa(U)\subset \mathbb{R}^n$, the symbol $a(x;\xi)$ is given by
$$ \left(\kappa_{*} A\left(\kappa^*u\right)\right)(x)=\frac{1}{(2\pi)^n} \int_{\mathbb{R}^n\times \mathbb{R}^n} e^{i\left\langle\xi,x-y\right\rangle}\asigma(x;\xi)u(y) d^n\xi d^ny$$
 for all  $u\in C^\infty_{\rm c}\left(\kappa\left(U\right)\right)$, and  one defines the density
$$
\wres A\defeq \frac{1}{(2\pi)^n}\left(\int_{\mathbb{S}^{n-1}}  \asigma_{-n}(x;\xi) \iota_{V} d^n\xi\right)d^nx  \in \vert\Lambda^{\top}M \vert,
$$
where $V=\sum_{i=1}^n\xi_i\partial_{\xi_i}$ and  $\asigma_{-n}$ is the symbol of order $-n$ in the polyhomogeneous expansion.
If for instance $M$ is compact then the \emph{Guillemin--Wodzicki residue} is  obtained by integrating over $x$. In what follows we will only consider densities as this allows for greater generality.

{ Note that when $A\in \Psi_{\rm cl}^m(M)$ with $m\in \mathbb{C}\setminus \clopen{-n,+\infty}\cap \mathbb{Z}$
then the above residue vanishes because in this case there is no term homogeneous of degree $-n$ in the asymptotic expansion of the symbol.}

It is proved in~\cite[Prop.~4.5]{Lesch} that the residue density is intrinsic. This is related to the fact that in the local chart,  $d^nx\,d^n\xi$ is the Liouville measure, which is intrinsic and depends only on the canonical symplectic structure on $T^*M$.

\begin{thm}[Wodzicki residue, dynamical formulation]\label{wodzickipdo}
Let $M$ be a smooth manifold and let $K_{A}(.,.)\pi_2^*\dvol_g\in \pazocal{D}^\prime_{N^*\Delta}(M\times M)\otimes \pi_2^*\vert \Lambda^{\top}M \vert$ be the kernel of a classical pseudodifferential operator $A\in \Psi^\cv_{\ph}(M)$ of order {$\cv\in \mathbb{C}$}. 
Then, for every Euler vector field $\euler$ we have the identity
\beq \label{toprove}
\wres A= (\resdyn {\pazocal{K}_{A}}) \dvol_g,
\eeq
where $\wres A\in\vert\Lambda^{\top}M \vert$ is the  Guillemin--Wodzicki residue density of $A$ and $\resdyn {\pazocal{K}_{A}}=\iota_\Delta^* \euler ( \Pi_0 ({\pazocal{K}_{A}}))$ is the dynamical residue of ${\pazocal{K}_{A}}$. { If $\cv\in \mathbb{C}\setminus \clopen{-n,+\infty}\cap \mathbb{Z}$
then both sides of the above equality vanish.}
\end{thm}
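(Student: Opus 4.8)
The plan is to show that both sides of \eqref{toprove} are determined by the single homogeneous term of order $-n$ in the classical symbol of $A$, and that the Kuranishi trick bringing $X$ into the normal form $\sum_i h^i\partial_{h^i}$ alters that term only by $\xi$-derivatives of homogeneous distributions, which are invisible to the cosphere integral defining the Wodzicki density. First I would localize: away from $\Delta$ the kernel $\pazocal{K}_A$ is smooth and does not contribute to $\Pi_0$, so by a partition of unity it suffices to argue near a point $p\in\Delta$ in the coordinates $(x^i,h^i)$ of Proposition~\ref{p:normalform}, in which $X=\sum_i h^i\partial_{h^i}$. Reusing the construction in the proof of Proposition~\ref{p:pseudopoly}, the Kuranishi trick writes $\pazocal{K}_A=\tfrac{1}{(2\pi)^n}\int_{\mathbb{R}^n}e^{i\langle\xi,h\rangle}\tilde a(x;\xi)\,d^n\xi$ near $p$ with $\tilde a\sim\sum_{k\geqslant 0}\tilde a_{\cv-k}$ a classical symbol, and the Laurent expansion of $\mathfrak{L}_s\pazocal{K}_A$ obtained there shows that the residue at $s=0$ is $\Pi_0(\pazocal{K}_A)=T_0$, where $T_0(x,h)=\tfrac{1}{(2\pi)^n}\int e^{i\langle\xi,h\rangle}\tilde a_{-n}(x;\xi)\,d^n\xi$ is the inverse Fourier transform of the order $-n$ term, understood through the quasihomogeneous extension selected by $\mathfrak{L}_s$. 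If $\cv\in\mathbb{C}\setminus(\clopen{-n,+\infty}\cap\mathbb{Z})$ there is no such term: then $\Pi_0(\pazocal{K}_A)=0$ by Proposition~\ref{p:pseudopoly}, hence $\resdyn\pazocal{K}_A=0$, while $\wres A=0$ since the symbol has no part of order $-n$, which settles the last sentence of the theorem.

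Next I would compute $X\Pi_0(\pazocal{K}_A)=XT_0$. By Hörmander's theory of homogeneous distributions, $\tilde a_{-n}(x;\cdot)$ admits a quasihomogeneous extension with $\bigl(\sum_i\xi_i\partial_{\xi_i}+n\bigr)\tilde a_{-n}=c(x)\,\delta^{\mathbb{R}^n}_{\{0\}}$, the residue obstruction being $c(x)=\int_{\mathbb{S}^{n-1}}\tilde a_{-n}(x;\xi)\,\iota_V d^n\xi$ (Proposition~\ref{p:residue} applied fibrewise in $x$). Since inverse Fourier transform in $\xi$ intertwines $\sum_i\xi_i\partial_{\xi_i}+n$ with $-\sum_i h^i\partial_{h^i}=-X$ (a one-line computation), applying it to this identity shows that $XT_0$ depends on $x$ alone, being a universal constant, fixed by the Fourier and orientation conventions, times $c(x)$. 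In particular $\iota_\Delta^*(XT_0)$ is well defined, and $\resdyn\pazocal{K}_A$ equals that constant times $c(x)$.

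It then remains to identify $c(x)$, computed from the \emph{transformed} symbol $\tilde a_{-n}$, with the Wodzicki density built from the original symbol of $A$. Here I would use the explicit form of the Kuranishi-transformed symbol from Step~3 of the proof of Proposition~\ref{p:pseudopoly}: $\tilde a_{-n}$ is the finite sum, over $|\beta|+k=\cv+n$, of $\tfrac{i^{-|\beta|}}{\beta!}\partial_\xi^\beta\partial_y^\beta$ applied to the rescaled symbol $a_{\cv-k}(x;M(x,y)^{-\top}\xi)\,|M(x,y)|^{-1}$ and restricted to $x=y$, where $M(x,x)=\id$. Every term with $|\beta|\geqslant 1$ is a $\xi$-derivative of a function homogeneous of degree $-n+|\beta|>-n$, hence has vanishing integral over $\mathbb{S}^{n-1}$ against $\iota_V d^n\xi$ by the elementary fact (the hypothesis-free counterpart of Corollary~\ref{l:vanishing1}) that a $\xi$-derivative of a classical symbol homogeneous of degree $>-n$ has vanishing cosphere residue; only the $\beta=0$, $k=\cv+n$ term survives, and it equals $a_{-n}(x;\xi)$ at $x=y$. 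Thus $c(x)=\int_{\mathbb{S}^{n-1}}a_{-n}(x;\xi)\,\iota_V d^n\xi$. The same argument handles the smooth factor $\rho(y)^{-1}$ ($\rho=\dvol_g/d^ny$) relating $\pazocal{K}_A$ to the Lebesgue-density symbol: Taylor expanding $\rho(x-h)^{-1}$ and absorbing the monomials in $h$ into $\xi$-derivatives, only the value $\rho(x)^{-1}$ at the diagonal contributes to the cosphere integral, and it cancels against $\dvol_g=\rho\,d^nx$ on the right-hand side. Matching the remaining constants and orientations then yields $\wres A=(\resdyn\pazocal{K}_A)\dvol_g$.

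The conceptual content lies in the last two steps; the rest is bookkeeping, and I expect the main obstacle to be precisely that bookkeeping. Two points need care. First, establishing rigorously that $X\Pi_0(\pazocal{K}_A)$ is truly constant in $h$: this rests on the precise rank-two Jordan structure of the resonance at $0$ (Proposition~\ref{p:pseudopoly}) combined with the Fourier intertwining, and one must check that the non-uniqueness in extending $\tilde a_{-n}$ does not affect the canonical extension singled out by $\mathfrak{L}_s$. Second, tracking the Kuranishi change of variables and all normalization constants, signs and orientations through the chain $\wres A\leftrightarrow c(x)\leftrightarrow XT_0\leftrightarrow\resdyn\pazocal{K}_A$, in particular justifying the vanishing of cosphere integrals of $\xi$-derivatives of homogeneous symbols in the present merely-smooth setting, where the wavefront-set hypotheses of Corollary~\ref{l:vanishing1} are replaced by smoothness away from $\xi=0$.
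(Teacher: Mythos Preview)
Your proposal is correct and follows essentially the same route as the paper's proof: localize via Proposition~\ref{p:pseudopoly}, identify $\Pi_0(\pazocal{K}_A)$ with the inverse Fourier transform $T_0$ of the order $-n$ symbol $\tilde a_{-n}$, read off $XT_0$ from the scaling obstruction of Proposition~\ref{p:residue}/Remark~\ref{l:scalanomal}, and then use the Kuranishi expansion together with the vanishing of cosphere integrals of $\xi$-derivatives (Corollary~\ref{l:vanishing1}) to pass from $\tilde a_{-n}$ back to $a_{-n}$. Your added remark on the density factor $\rho$ and your caution about Corollary~\ref{l:vanishing1}'s wavefront hypothesis (which is indeed superfluous here since the symbol is smooth off $\xi=0$) are both to the point.
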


In particular,  $ (\resdyn {\pazocal{K}_{A}}) \dvol_g$ {does not depend} on $\euler$.

\begin{refproof}{Theorem \ref{wodzickipdo}}
We  use the notation from the proof of Proposition~\ref{p:pseudopoly}.
Recall that 
$$ \Pi_0({\pazocal{K}_{A}})(x,h)=T_0(x,h)=\frac{1}{(2\pi)^n} \int_{\xi\in \mathbb{R}^n} e^{i\left\langle \xi,h\right\rangle}\tilde{\asigma}_{-n}(x;\xi)d^n\xi$$
where the oscillatory integral representation uses the homogeneous components of the symbol denoted by $\tilde{\asigma} \in  C^\infty(\kappa(U)\times \mathbb{R}^n)$; this
symbol $\tilde{\asigma}$ was constructed from the initial symbol $\asigma\in C^\infty(\kappa(U)\times \mathbb{R}^n)$ using the Kuranishi trick and 
is adapted to the coordinate frame $(x,h)\in C^\infty(\kappa(U)\times \mathbb{R}^n, \mathbb{R}^{2n} )$ 
in which $\euler$ has the normal form $\kappa^2_*\euler=h^i\partial_{h^i}$.
Let us examine  the meaning of the term $\euler T_0$ and
relate it to the Wodzicki residue. 
By  Proposition~\ref{p:residue}, 
the residue is the homological obstruction for the term
$\tilde{\asigma}_{-n}(x;.) $ to admit a scale invariant distributional extension to $\kappa(U)\times \mathbb{R}^n$. 
By Remark~\ref{l:scalanomal},
this reads
$$
(\xi_i\partial_{\xi_i}-n) \tilde{\asigma}_{-n}(x;\xi)= \left(\int_{\vert \xi\vert=1}\tilde{\asigma}_{-n}(x;\xi) \iota_{\sum_{i=1}^n\xi_i\partial_{\xi_i}} d^n\xi \right)\delta_{\{0\}}(\xi)
$$
{ where $\iota_{\sum_{i=1}^n\xi_i\partial_{\xi_i}}$ is the contraction operator by the vector field $\sum_{i=1}^n\xi_i\partial_{\xi_i}$ in the Cartan calculus.}
By inverse Fourier transform, $\euler T_0=\frac{1}{(2\pi)^n} \left(\int_{\vert \xi\vert=1}\tilde{\asigma}_{-n}(x;\xi)\iota_{\sum_{i=1}^n\xi_i\partial_{\xi_i}} d^n\xi\right)$, which  is a smooth function of $x\in \kappa(U)$.
We are not finished yet since the Wodzicki residue density is defined in terms of the 
symbol $\asigma(x;\xi) \in C^\infty(\kappa(U)\times \mathbb{R}^n)$ we started with. Let us recall that $\asigma$ is defined in  
such a way that $\kappa^2_*{\pazocal{K}_A}(x,x-y)=\frac{1}{(2\pi)^n} \int_{\xi\in \mathbb{R}^n} e^{i\left\langle \xi,x-y\right\rangle}\asigma(x;\xi)d^n\xi$,
and the Wodzicki residue equals $$
\wres(A)(x)=\frac{1}{(2\pi)^n} \int_{\vert \xi \vert=1}\asigma_{-n}(x;\xi) \iota_{\sum_{i=1}^n\xi_i\partial_{\xi_i}} d^n\xi.$$
{ We use the identity from equation~(\ref{e:exp1referee}): $$\tilde{a}(x;\xi)\sim A(x,y;\xi)\sim \sum_{p=0}^\infty \sum_{\vert \beta\vert+k=p} \frac{i^{-\vert\beta\vert}}{\beta !} \left(\partial_\xi^\beta\partial_y^\beta A_{\alpha-k}\right)(x,y;\xi)|_{x=y} .$$
For the residue computation, we need to extract the relevant term $\tilde{a}_{-n}$ on the r.h.s.~which is homogeneous of degree $-n$, so we need to set $\cv-p=-n$ hence $p=n+\cv $. This term reads $\tilde{a}_{-n}(x;\xi)=\sum_{\vert \beta\vert+k=n+\cv} \frac{i^{-\vert\beta\vert}}{\beta !} \left(\partial_\xi^\beta\partial_y^\beta A_{\cv-k}\right)(x,y;\xi)|_{x=y} $.}

We now make the crucial observation that for all $x\in \kappa(U)$,
$$ 
\bea
&\int_{\vert \xi\vert=1}  \tilde{\asigma}_{-n}(x;\xi) \iota_{\sum_{i=1}^n\xi_i\partial_{\xi_i}} d^n\xi
\\ &=
 \sum_{\vert \beta\vert+k={ n+\cv}  }
\int_{\vert \xi\vert=1} \frac{i^{-\vert \beta\vert}}{\beta !} \partial_\xi^\beta \partial_y^\beta  A_{\cv-k-\beta}(x,y;\xi) |_{x=y} \iota_{\sum_{i=1}^n\xi_i\partial_{\xi_i}} d^n\xi \\
&=\int_{\vert \xi\vert=1}  A_{-n}(x,y;\xi) |_{x=y} \iota_{\sum_{i=1}^n\xi_i\partial_{\xi_i}} d^n\xi=\int_{\vert \xi\vert=1}  \asigma_{-n}(x;\xi) \iota_{\sum_{i=1}^n\xi_i\partial_{\xi_i}} d^n\xi
\eea 
$$
by the vanishing property ({Corollary}~\ref{l:vanishing1}), which implies that the integral of 
all the terms with derivatives vanish.
Therefore by inverse Fourier transform, we find that
\begin{eqnarray}
C_0(x)=\frac{1}{(2\pi)^n}\int_{\vert \xi\vert=1}  \asigma_{-n}(x;\xi) \iota_{\sum_{i=1}^n\xi_i\partial_{\xi_i}} d^n\xi.
\end{eqnarray}
 The residue density $\left(\int_{\vert \xi\vert=1}  \asigma_{-n}(x;\xi)\iota_{\sum_{i=1}^n\xi_i\partial_{\xi_i}} d^n\xi\right) {d^nx}$ is \emph{intrinsic} as proved by Lesch \cite[Prop 4.5]{Lesch} (it is defined in coordinate charts but satisfies some compatibility conditions that makes it intrinsic on $M$).
To conclude observe that $\euler^2T_0=0$~\footnote{This is a consequence of the Jordan blocks having only rank $2$.} hence by the Cauchy formula, for any small disc $D$ around $0$~:
\begin{eqnarray*}
\frac{1}{2i\pi}\int_{\partial D}{ \left(\euler \pazocal{K}_A\right)}(z) dz|_{U\times U}={\left(\euler T_0\right)}(x,y)|_{U\times U}=\frac{1}{(2\pi)^n}\int_{\vert \xi\vert=1}  \asigma_{-n}(x;\xi) \iota_{\sum_{i=1}^n\xi_i\partial_{\xi_i}} d^n\xi,
\end{eqnarray*}
which in combination with the fact that $y\mapsto{\left(\euler T_0\right)}(x,y)$ is locally constant proves \eqref{toprove} on $U$. The above identity globalizes immediately, which finishes the proof.
\end{refproof}

\section{Holonomic singularities of the Hadamard parametrix} 

\label{section4}

\subsection{Hadamard parametrix} \label{s:hadamardformal} 

We now consider  the setting of a { time-oriented} Lorentzian manifold $(M,g)$, and we assume it is of \emph{even} dimension $n$. 

Let $P=\square_g$ be the wave operator (or d'Alembertian), i.e.~it is the   Laplace--Beltrami operator associated to the Lorentzian metric $g$. Explicitly, using the notation $\module{g}=\module{\det g}$,
$$
\bea
P&=\module{g(x)}^{-\frac{1}{2}}\partial_{x^j} \module{g(x)}^{\frac{1}{2}}g^{jk}(x)\partial_{x^k}  \\
& =  \partial_{x^j}g^{jk}(x)\partial_{x^k}+b^k(x)\partial_{x^k} 
\eea
$$
where we sum over repeated indices, and $b^k(x)=\module{g(x)}^{-\12} g^{jk}(x)(\partial_{x^j}\module{g(x)}^{\frac{1}{2}} )$. For $\Im z\geqslant 0$ we consider the operator $P-z$. 

The Hadamard parametrix for $P-z$ is constructed in several steps which we briefly recall following \cite{Dang2020}.

\subsubsection*{Step 1} Let $\eta=dx_0^2-(dx_1^2+\cdots+dx_{n-1}^2)$ be the  Minkowski metric on $\rr^n$, and consider the corresponding quadratic form $$
\vert\xi\vert_\eta^2 =  -\xi_0^2+\sum_{i=1}^{n-1}\xi_i^2,
$$
defined for convenience with a {minus} sign. For $\cv\in \cc$ and $\Im z >0$,  the distribution $\left(\vert\xi\vert_\eta^2-z \right)^{-\cv}$ is well-defined by pull-back from $\rr$. More generally, for $\Im z \geqslant 0$, the limit $\left(\vert\xi\vert_\eta^2-z -i0\right)^{-\cv}=\lim_{\varepsilon\to 0^+}\left(\vert\xi\vert_\eta^2-z -i\varepsilon\right)^{-\cv}$ from the upper half-plane is well defined as a distribution on $\rr^n\setminus \{0\}$. If $z\neq 0$ it can be  extended to a  family of homogeneous distributions on $\rr^n$, holomorphic in $\cv\in \cc$ (and to a meromorphic family if $z=0$).  We introduce special notation for its appropriately normalized Fourier transform,
\beq\label{eq:defFsz}
\Fs{z}\defeq\frac{\Gamma(\cv+1)}{(2\pi)^{n}} \int e^{i\left\langle x,\xi \right\rangle}\left(\vert\xi\vert_\eta^2-i0-z \right)^{-\cv-1}d^{n}\xi,
\eeq
which defines a  family of distributions on $\rr^n$, holomorphic in $\cv\in\cc\setminus\{-1,-2,\dots\}$ for $\Im z\geqslant 0$, $z\neq 0$.

\subsubsection*{Step 2}   Next, one { pulls back} the distributions $\Fse{z}$ to a neighborhood of the diagonal $\diag\subset M\times M$  using the exponential map. 

More precisely, this can be done as follows. Let  $\exp_x:T_xM\to M$ be the exponential geodesic map. We consider a neighborhood
of the zero section $\zero $ in $TM$
on which the 
map
\beq \label{xv}
(x;v)\mapsto (x,\exp_x(v))\in M^2
\eeq
is a local
diffeomorphism 
onto its image, denoted   by $\pazocal{U}$. 
Let 
{$(e_1,\dots,e_n)$} be a local { time-oriented} orthonormal  frame
defined on an open set and {$(\varalpha^i)_{i=1}^n$} the corresponding  coframe. For $(x_1,x_2)\in\pazocal{U}$ (with  $x_1,x_2$  in that open set),  we define the map
\beq\label{applipullback}
 G: (x_1,x_2)\mapsto { 
\Big(G^i(x_1,x_2)=
\underset{\in T_{x_1}^* M}{\underbrace{\varalpha^i_{x_1}}}\underset{\in T_{x_1}M}{\underbrace{(\exp_{x_1}^{-1}(x_2))}   } \Big)_{i=1}^n \in\mathbb{R}^{n}.}
\eeq
 Here, $(x_1,x_2) \mapsto (x_1;\exp_{x_1}^{-1}(x_2))$ is a diffeomorphism as it is the inverse of \eqref{xv}, and so $G$ is a submersion.  
   
For any distribution $f$ in
$\pazocal{D}^\prime(\mathbb{R}^{n})$, we can consider the pull-back $(x_1,x_2)\mapsto G^*f(x_1,x_2)$, and if $f$ is $O(1,n-1)_+^\uparrow$-invariant, 
then
the pull-back  does not depend on the choice
of orthonormal  frame $(e_\mu)_\mu$. This allows us to canonically define the pullback $G^*f\in \pazocal{D}^\prime(\cU)$, of 
$O(1,n-1)_+^\uparrow$-invariant 
distributions to distributions
defined on an open set $\pazocal{U}$ which is in fact a neighborhood 
of  the diagonal $\diag$.

\begin{defi}\label{d:fsneardiag}
For  $\cv\in\cc$, the distribution 
$\Fe{z}= G^*\Fse{z}\in \pazocal{D}^\prime(\pazocal{U})$ is defined by pull-back of the $O(1,n-1)_+^\uparrow$-invariant distribution  $\Fse{z}\in \pazocal{D}^\prime\left(\mathbb{R}^{n}\right)$  introduced  in  \eqref{eq:defFsz}.
\end{defi}

\subsubsection*{Step 3}   The Hadamard parametrix is  constructed in normal charts using the family $\Fe{z}$. Namely, for fixed
   $\varvarm\in M$,  we  express the distribution $x\mapsto \mathbf{F}_\cv(z,\varvarm,x)$
in normal coordinates centered at $\varvarm$, defined on some $U\subset T_{x_0}{M}$. Instead of using the somewhat heavy notation $ \mathbf{F}_\cv(z,\varvarm,\exp_\varvarm(\cdot))$ we will simply write
$\Feg{z}\in \pazocal{D}^\prime(U)$. One then looks for a parametrix $H_N(z)$ of order $N$ of the form
\begin{eqnarray}
\boxed{H_N(z)=\sum_{k=0}^N u_k \Feg[k]{z} \in \pazocal{D}^\prime(U) },
\end{eqnarray}
and after computing $\left(P-z\right)H_N(z,.)$  one finds that the sequence of functions 
$(u_k)_{k=0}^\infty$ in $C^\infty(U)$ should solve the hierarchy of transport equations
\begin{eqnarray}
\boxed{2k u_k+ b^i(x)\eta_{ij}x^j u_k+2 x^i\partial_{x^i} u_k+2Pu_{k-1}=0}
\end{eqnarray}
with initial condition $u_0(0)=1$, where by convention $u_{k-1}=0$ for  $k=0$, and we sum over repeated indices. The transport equations have indeed a unique solution, and they imply  that on $U$, $H_N(z,.)$ solves
\begin{equation}
\left(P-z\right)H_N(z,.)={\module{g}^{-\frac{1}{2}}}\delta_0+(Pu_N)\mathbf{F}_N.
\end{equation}

\subsubsection*{Step 4} The final step consists in considering the dependence on $x_0$ to obtain a parametrix on the neighborhood $\cU$ of the diagonal.  One shows that $\pazocal{U} \ni (x_1,x_2) \mapsto u_k(\varalpha(\exp_{x_1}^{-1}(x_2))) $ is smooth in both arguments, and since $\mathbf{F}_{\cv}(z,.)$ is already defined on $\pazocal{U}$,  $$H_N(z,x_1,x_2)=\sum_{k=0}^N u_k(\varalpha(\exp_{x_1}^{-1}(x_2)))\mathbf{F}_\cv(z,x_1,x_2)
$$ 
is well defined as a distribution on $\pazocal{U}$.
Dropping the exponential map in the notation from now on for simplicity, the \emph{Hadamard parametrix} $H_N(z,.)$ of order $N$ is by definition the distribution
\begin{equation}
\boxed{H_N(z,.)=\sum_{k=0}^N u_k \Fe[k]{z}\in \pazocal{D}^\prime(\pazocal{U}).}
\end{equation}
Finally, we use an arbitrary cutoff function $\chi\in \cf(M^2)$ supported in $\pazocal{U}$ to extend the definition of $H_N(z,.)$ to $M^2$,
$$
\boxed{H_N(z,.)=\sum_{k=0}^N \chi u_k \Fe[k]{z}\in \pazocal{D}^\prime({M\times M}).}
$$
The Hadamard parametrix extended to $M^2$  satisfies 
\beq\label{eq:PzHN}
\left(P-z\right)  H_N(z,.)={ \module{g}^{-\frac{1}{2}}}\delta_{\diag}+(Pu_N)\mathbf{F}_N(z,.)\chi+r_N(z,.),
\eeq
where 
 $\module{g}^{-\frac{1}{2}}\delta_\Delta(x_1,x_2)$ is the Schwartz kernel of the identity map and $r_N(z,.)\in  \pazocal{D}^\prime({M\times M})$ is an error term supported in a punctured neighborhood of $\Delta$ which is due to the presence of the cutoff $\chi$.

\subsection{Oscillatory integral representation and log-polyhomogeneity}\label{ss:polhom} Given an Euler vector field $\euler$, our current objective is to  study the behaviour 
of ${\left( e^{-t\euler}H_N\right)}(z)$ and in particular  to  prove that
$H_N(z)$ is tame log-polyhomogeneous near $\Delta$. The proof uses an
oscillatory integral representation of the Hadamard parametrix involving
symbols with values in distributions { whose wave front set in the $\xi$ variable is contained in the conormal of the cone $\{Q=0\}\subset \mathbb{R}^n$. This conormal is a non-smooth Lagrangian in $T^*\mathbb{R}^n$ whose singularity is at the vertex of the cone $\{Q=0\}\subset \mathbb{R}^n$}.

{ 
\begin{rema}[Coordinate frames versus charts]
In the present part, instead of using charts we  
favor a presentation using coordinate frames
which makes notation
simpler. The two viewpoints are equivalent
since given a chart $\kappa:U\to \kappa(U)\subset \mathbb{R}^n$, the linear 
coordinates $(x^i)_{i=1}^n \in \mathbb{R}^{n*}$ on $\mathbb{R}^n$ can be pulled back on 
$U$ as a coordinate frame $(\kappa^*x^i)_{i=1}^n\in C^\infty(U;\mathbb{R}^n)$.  
\end{rema}}

We start by representing the distributions $\mathbf{F}_\cv$ defined  in \sec{s:hadamardformal} by oscillatory integrals using the 
coordinate frames from Proposition~\ref{p:normalform} adapted to 
the Euler vector field $\euler$.

\begin{lemm}\label{l:kuranishi} 
Let $(M,g)$ be a  { time-oriented}  Lorentzian manifold and $\euler$  an Euler vector field. Let $p\in \Delta$, and let  $(x^i,h^i)_{i=1}^n$ be a local coordinate frame defined on a neighborhood $\Omega\subset M\times M$ of $p$ such that $\euler=\sum_{i=1}^n h^i\partial_{h^i}$ on $\Omega$.
In this coordinate frame, $\mathbf{F}_\cv(z,.,.)$ has the representation
$$
\mathbf{F}_\cv(z,x,h)=\int_{\mathbb{R}^n} e^{i\left\langle\xi,h\right\rangle} A_\cv(z,x,h;\xi) d^n\xi,
$$
where $A_\cv$ depends holomorphically in $z\in \{\Im z>0\}$,  is homogeneous in $(z,\xi)$ of degree $-2(\cv+1)$
w.r.t.~the scaling 
$(\lambda^2z,\lambda\xi) $, and  for $\mu\neq 0$, $A_\cv(i0+\mu,.,.;.)$ is a {distribution} in $\Omega\times \mathbb{R}^{n*}$. 
\end{lemm}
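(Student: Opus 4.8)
The plan is to transport the Euclidean oscillatory integral representation of $\Fse{z}$ through the two geometric pull-backs that define $\mathbf{F}_\cv$ (namely $G^*$ and the normal-coordinate identification), and then apply the Kuranishi trick to rewrite the resulting phase in the form $\langle\xi,h\rangle$ dictated by the normal form $\euler=\sum_i h^i\partial_{h^i}$. First I would start from the definition \eqref{eq:defFsz}, which already exhibits $\Fse{z}$ as an oscillatory integral with amplitude $\frac{\Gamma(\cv+1)}{(2\pi)^n}\big(|\xi|_\eta^2-i0-z\big)^{-\cv-1}$; this amplitude is a homogeneous distribution on $\mathbb{R}^n_\xi$ of degree $-2(\cv+1)$ under the scaling $(\lambda^2 z,\lambda\xi)$, which accounts for the scaling statement of the lemma. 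Its wavefront set is contained in the conormal of the light cone $\{|\xi|_\eta^2=0\}$, a non-smooth Lagrangian with its singularity over the cone vertex. The only reason it is a genuine distribution (and holomorphic in $z$) rather than merely a meromorphic family is that $\Im z>0$, which is exactly the standing hypothesis.

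Next I would express $\mathbf{F}_\cv(z,\cdot,\cdot)$ in the coordinate frame $(x^i,h^i)$. Combining Steps 2–4 of \sec{s:hadamardformal}, $\mathbf{F}_\cv(z,x_1,x_2)=G^*\Fse{z}$, where $G$ is the submersion \eqref{applipullback}. Working in the normal-form coordinates $(x^i,h^i)$ of Proposition \ref{p:normalform} near $p$, the map $(x,h)\mapsto G(x,h)=:\Phi(x,h)\in\mathbb{R}^n$ is smooth, and since $(x^i,h^i)$ and the geometric coordinates defining $G$ both vanish to first order exactly on $\Delta$, one has $\Phi(x,0)=0$ and the differential $d_h\Phi(x,0)$ is invertible. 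Writing the oscillatory integral for $\Fse{z}$ and pulling back the phase gives $e^{i\langle\Phi(x,h),\xi\rangle}$ with amplitude $\big(|\xi|_\eta^2-i0-z\big)^{-\cv-1}$ times the pull-back Jacobian factors. By Hadamard's lemma $\Phi(x,h)=M(x,h)h$ for a smooth $\GL_n(\mathbb{R})$-valued matrix $M$ with $M(x,0)=d_h\Phi(x,0)$ invertible, exactly as in Step 2 of the proof of Proposition \ref{p:pseudopoly}; explicitly $M(x,h)=\int_0^1 d_h\Phi(x,th)\,dt$. Then $\langle\Phi(x,h),\xi\rangle=\langle h,{}^{t}\!M(x,h)\xi\rangle$, and the substitution $\xi\mapsto {}^{t}\!M(x,h)^{-1}\xi$ (the Kuranishi trick) brings the phase into the required form $e^{i\langle\xi,h\rangle}$, at the cost of replacing the amplitude by
$$
A_\cv(z,x,h;\xi)=\frac{\Gamma(\cv+1)}{(2\pi)^{n}}\,\big|\det M(x,h)\big|^{-1}\,\big(|{}^{t}\!M(x,h)^{-1}\xi|_\eta^2-i0-z\big)^{-\cv-1}\cdot J(x,h),
$$
where $J(x,h)$ collects the smooth (in both $(x,h)$) Jacobian and frame-dependent factors coming from $G^*$ and the normal-coordinate identification. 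Because $M(x,h)$ and $J(x,h)$ are smooth and $M$ is valued in $\GL_n$, and because $\big(q-i0-z\big)^{-\cv-1}$ depends holomorphically on $z\in\{\Im z>0\}$ and is a distribution in $\xi$ with wavefront set in the conormal of the (now $(x,h)$-dependent) cone $\{|{}^{t}\!M(x,h)^{-1}\xi|_\eta^2=0\}$, the function $A_\cv$ has all the asserted properties: it is holomorphic in $z$, it is a distribution in $\Omega\times\mathbb{R}^{n*}$ for $z=i0+\mu$, $\mu\neq0$ (the $-i0$ prescription making the boundary value well-defined), and its homogeneity in $(z,\xi)$ of degree $-2(\cv+1)$ under $(\lambda^2 z,\lambda\xi)$ is inherited from $\big(|\xi|_\eta^2-i0-z\big)^{-\cv-1}$ since the conjugation by ${}^{t}\!M(x,h)^{-1}$ is linear in $\xi$ and $|\det M|^{-1}J$ is $\xi$-independent.

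The main obstacle is not the formal computation but making the Kuranishi change of variables rigorous at the level of distributions: the amplitude $\big(|\xi|_\eta^2-i0-z\big)^{-\cv-1}$ is not a classical symbol — it is distribution-valued with a singular (conormal) wavefront set concentrated over the light cone — so the usual change-of-variables theorem for oscillatory integrals with $S^m$ symbols does not apply verbatim. The fix is to observe that the substitution $\xi\mapsto {}^{t}\!M(x,h)^{-1}\xi$ is, fiberwise in $(x,h)$, a \emph{linear} invertible change of variables, under which pull-back of homogeneous distributions is unproblematic (cf.\ \cite[Thm.~3.2.3, 3.2.4]{H}); the $(x,h)$-dependence is smooth and the wavefront set in $\xi$ stays away from $\xi=0$ on the integration region after the standard splitting into $|\xi|\geqslant 1$ and a compactly supported smooth piece. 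One then checks that the resulting object still defines a distribution on $\Omega\times\mathbb{R}^n_h$ by the oscillatory-integral pairing, using the transversality of $\{h\}\times\mathbb{R}^n_\xi$ to $\mathrm{WF}(A_\cv)$ — precisely the point flagged in the introduction to \sec{ss:polhom} that ``special care is required when operating with expansions and controlling the remainders.'' Once this is in place, the remaining assertions are immediate from the explicit formula for $A_\cv$.
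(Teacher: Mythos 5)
Your proposal is correct and follows essentially the same route as the paper: the Kuranishi trick writing $\exp^{-1}_{(x,0)}(x,h)=M(x,h)h$ via the fundamental theorem of calculus, the fiberwise linear substitution $\xi\mapsto{}^{t}M(x,h)^{-1}\xi$ producing the amplitude as the pull-back of $(Q(\cdot)-z)^{-\cv-1}$ by the submersion $(x,h;\xi)\mapsto{}^{t}M(x,h)^{-1}\xi$, and the observation that homogeneity under $(\lambda^2z,\lambda\xi)$ is inherited. The only place where you gesture rather than argue is the rigorous justification of the change of variables: the paper carries this out with a dyadic partition of unity in $\xi$, bounding each block by the distributional order $\lfloor\Re\cv\rfloor+1$ and homogeneity to get absolute convergence for $\Re\cv$ large, and then extending to all $\cv$ by analytic continuation — a concrete mechanism you would need to supply in place of the appeal to wavefront transversality (and note the extraneous factor $J(x,h)$ is just $1$, since $G^*$ is plain composition with a submersion and contributes no Jacobian).
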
  

 Integrands such as $A_\cv(i0+\mu,.,.;.)$ are sometimes called distribution-valued amplitudes in the literature since they are not smooth symbols but distributions, yet they behave like symbols of oscillatory integrals in the sense they have homogeneity with respect to scaling and the scaling degree in $\xi$ is responsible for the singularities of $\mathbf{F}_\cv$.

\begin{refproof}{Lemma \ref{l:kuranishi}}
Our proof  uses in an essential way the so-called Kuranishi trick { again}. 
Let $s=(s^i)_{i=1}^n$ denote the orthonormal moving {coframe} from
\sec{s:hadamardformal}.
We denote by $\exp_{m}:T_mM\to M$ the geodesic exponential 
map induced by the metric $g$. 
We claim that
\beq\label{eq:idM}
{ \Big(  \underset{\in T^*_{(x,0)}\Omega}{ \underbrace{s^i_{(x,0)}}} 
\underset{\in T_{(x,0)}\Omega}{\underbrace{\left(\exp^{-1}_{(x,0)}(x,h)\right)}}= M(x,h)^i_jh^j\Big)_{i=1}^n }
\eeq
where 
{
$M:\Omega \ni (x,h)\mapsto (M(x,h)_i^j)_{1\leqslant i,j\leqslant n}\in \GL_n(\mathbb{R})$} is a smooth map
such that $M(x,0)=\id$.
By the fundamental theorem of calculus,
\begin{eqnarray*}
\exp^{-1}_{(x,0)}(x,h)=\int_{0}^1 \frac{d}{dt} \exp^{-1}_{(x,0)}(x,th) dt =\left(\int_{0}^1 d\exp^{-1}_{(x,0)}(x,th) dt\right)(h). 
\end{eqnarray*}
If we set $M(x,h)=s_{(x,0)}\left(\int_{0}^1 d\exp^{-1}_{(x,0)}(x,th) dt\right)$ then $M(x,0)=\id$
so up to choosing some smaller open set $\Omega$, the matrix $M(x,h)$ is
invertible for $(x,h)\in \Omega$ and satisfies \eqref{eq:idM}.

We now insert \eqref{eq:idM} into  the definition of $\mathbf{F}_\cv$:
\[\bea
\mathbf{F}_\cv(z,x,h)&=\frac{\Gamma(\cv+1)}{(2\pi)^n}\int_{\mathbb{R}^n} e^{i\left\langle\xi,s_{(x,0)}\left(\exp^{-1}_{(x,0)}(x,h)\right)\right\rangle} \left(Q(\xi)-z \right)^{-\cv-1}  d^n\xi\\
&=\frac{\Gamma(\cv+1)}{(2\pi)^n}\int_{\mathbb{R}^n} e^{i\left\langle \t M(x,h)\xi,h\right\rangle} \left(Q(\xi)-z \right)^{-\cv-1}  d^n\xi
\\
&=\frac{\Gamma(\cv+1)}{(2\pi)^n}\int_{\mathbb{R}^n} e^{i\left\langle \xi,h\right\rangle} \left(Q((\t M(x,h))^{-1}\xi)-z \right)^{-\cv-1} \module{M(x,h)}^{-1} d^n\xi.
\eea
\]
This motivates setting 
$A_\cv(z,x,h;\xi)=\left(Q((\t M(x,h))^{-1}\xi)-z \right)^{-\cv-1} \module{M(x,h)}^{-1}$ 
in $\Omega\times \mathbb{R}^{n*}$, which is homogeneous of degree $-2(\cv+1)$ w.r.t.~the scaling defined as $(\lambda^2z,\lambda\xi)$ for $\lambda>0$. If we let $\Im z\rightarrow 0^+$, then we  view $A_\cv(-m^2+i0,x,h;\xi)$ as a distribution-valued symbol defined by the pull-back of
$\left(Q(.)+m^2-i0\right)^{-\cv-1}$ by the submersive map
$\Omega\times \mathbb{R}^{n*} \ni (x,h;\xi) \mapsto (\t M(x,h))^{-1}\xi\in \mathbb{R}^{n*}$, where the fact that it is a submersion comes from the invertibility of $M(x,h)\in M_n(\mathbb{R})$  for all $(x,h)\in \Omega$.

The formal change of variable can be justified with a dyadic partition of unity $1=\chi(\xi)+\sum_{j=1}^\infty \beta(2^{-j}\xi)$ as follows.
Observe that $1=\chi((\t M(x,h))^{-1}\xi)+\sum_{j=1}^\infty \beta((\t M(x,h))^{-1}2^{-j}\xi)$. We know that
$\left(Q(\xi)-z \right)^{-\cv-1}$ is a distribution of order $\plancher{\Re \cv}+1$ hence by the change of variable formula for 
distributions:
$$ 
\bea
&\sum_{j=1}^\infty\left\langle (Q(.)-z)^{-\cv-1},\beta(2^{-j}.) e^{i\left\langle \t M(x,h).,h\right\rangle} \right\rangle+\left\langle (Q(.)-z)^{-\cv-1},\chi(.) e^{i\left\langle \t M(x,h).,h\right\rangle} \right\rangle \\
&=\sum_{j=1}^\infty\left\langle (Q((\t M(x,h))^{-1}.)-z)^{-k-1},\beta((\t M(x,h))^{-1}2^{-j}.) e^{i\left\langle .,h\right\rangle}  \right\rangle  \module{M(x,h)}^{-1}
 \fantom + \left\langle (Q((\t M(x,h))^{-1}.)-z)^{-k-1},\chi((\t M(x,h))^{-1}.) e^{i\left\langle .,h\right\rangle}  \right\rangle  \module{M(x,h)}^{-1}\\
&=\sum_{j=1}^\infty 2^{j(n-2(\cv+1))}\left\langle (Q((\t M(x,h))^{-1}.)-2^{-2j}z)^{-k-1},\beta((\t M(x,h))^{-1}.) e^{i\left\langle 2^j.,h\right\rangle}\right\rangle  \module{M(x,h)}^{-1}
\fantom + \left\langle (Q((\t M(x,h))^{-1}.)-z)^{-k-1},\chi((\t M(x,h))^{-1}.) e^{i\left\langle 2^j.,h\right\rangle}\right\rangle  \module{M(x,h)}^{-1}
\eea
$$
where the series satisfies a bound of the form
$$
\bea
&\sum_{j=1}^\infty \Big| \left\langle (Q-z)^{-\cv-1},\beta(2^{-j}.) e^{i\left\langle \t M(x,h).,h\right\rangle} \right\rangle\Big| \\ &\leqslant C \sum_{j=1}^\infty 2^{j(n-(\Re\cv+1))} \sup_{(x,h)\in \Omega} \Vert \beta((\t M(x,h))^{-1}.)\Vert_{C^{\plancher{\Re\cv}+1}}, 
\eea
$$
where $C$ does not depend on $(x,h)\in \Omega$ and the series converges absolutely for $\Re\cv$ large enough.
Then the change of variable is justified for all $\cv\in \mathbb{C}$ by analytic continuation in $\cv\in \mathbb{C}$.
\end{refproof}
%

 Given an Euler vector field $\euler$, let $(x,h)$ be the local coordinate
frame for which $\euler=h^i\partial_{h^i}$.  From the proof of Lemma  \ref{l:kuranishi}  it follows that for any sufficiently small open set $\Omega$, we can represent the Hadamard parametrix in the form
\begin{eqnarray*}
H_N(z,x,h)|_\Omega= \sum_{k=0}^N \int_{\mathbb{R}^n}  e^{i\left\langle\xi,h\right\rangle} B_{2(k+1)}(z,x,h;\xi) d^n\xi
\end{eqnarray*}
where  $B_{2(k+1)}\in \pazocal{D}^\prime(\Omega\times \mathbb{R}^{n*})$ is given by
\begin{eqnarray}\label{Bksymbol}
B_{2(k+1)}(z,x,h;\xi)=\frac{\Gamma(k+1)}{(2\pi)^{n}}\chi u_k(x,h) \left(Q((\t M(x,h))^{-1}\xi)-z \right)^{-k-1} \module{M(x,h)}^{-1},
\end{eqnarray}    
where $M(x,h)$ is the matrix satisfying \eqref{eq:idM}. Observe that $B_{2(k+1)}$ is homogeneous of degree $-2k-2$ w.r.t.~the scaling 
$ (\xi,z) \mapsto (\lambda\xi,\lambda^2z)$.


Since the Euler vector field $\euler$ reads
$\euler=h^i\partial_{h^i}$ in our local coordinates, the scaling of the Hadamard parametrix reads
$$\bea
 {\left( e^{-t\euler} H_N\right)}(z,x,h)&=H_N(z,x,e^{-t}h)=
 \sum_{k=1}^N \int_{\mathbb{R}^n}  e^{i\left\langle\xi,e^{-t}h\right\rangle} B_{2(k+1)}(z,x,e^{-t}h;\xi) d^n\xi\\
 &=\sum_{k=1}^N e^{tn} \int_{\mathbb{R}^n}  e^{i\left\langle\xi,h\right\rangle} B_{2(k+1)}(z,x,e^{-t}h;e^{t}\xi) d^n\xi.
 \eea
$$
In consequence, to capture the $t\to+\infty$ behaviour we need to compute the asymptotic expansion of
each term $ B_{2(k+1)}(z,x,\lambda h;\frac{\xi}{\lambda})$, 
and thus of $\big(Q(\t M(x,\lambda h)^{-1}\frac{\xi}{\lambda})-z\big)^{-k-1}$
as $\lambda\rightarrow 0^+$. We will see that this asymptotic expansion 
occurs in a space of holonomic distributions singular along 
the \emph{singular Lagrangian} { (it is the conormal bundle of the cone $\{Q=0\}$ in $\xi$ variables)} $$\{(\xi;\tau dQ(\xi)) \st \tau<0, Q(\xi)=0 \}.$$

\subsubsection{Asymptotic expansions of $\mathbf{F}_k(z)$ and $(Q(\frac{\xi}{\lambda})-z)^{-k-1}$.} As already remarked, 
the distribution $$(Q( \t M^{-1}(x,h) \xi)-z)^{-\cv-1}$$ is homogeneous w.r.t.~scaling
$(x,z)\mapsto(\lambda\xi,\lambda^2z) $.  We want to give a log-polyhomogeneous expansion as an asymptotic series of 
distributions in the $\xi$ variables even though $\Im z>0$.
This leads us to consider the regularized distributions
$\pf (Q(\xi)-i0)^{-k}$ and $\pf (Q(\xi)-i0)^{-k} (Q(\xi)-z)^{-1}$ for all integers $k\geqslant \frac{n}{2}$, defined as follows. 

Recall that {$(Q(\xi)-i0)^{-\cv}$} (resp.~$ (Q(\xi)-i0)^{-\cv} (Q(\xi)-z)^{-1} $ when $\Im z>0$) is a meromorphic family of tempered distributions with simple poles at  $\cv=\{\frac{n}{2},\frac{n}{2}+1,\dots \}$.  The residues are distributions supported at $\{0\}\subset \mathbb{R}^n$.

\begin{defi}\label{d:finitepart}
We define $\pf(Q(\xi)-i0)^{-k}$ {(resp.~$ \pf (Q(\xi)-i0)^{-k} (Q(\xi)-z)^{-1} $)} as the value at $\cv=k$ of the holomorphic part of the Laurent series expansion of
$(Q(\xi)-i0)^{-\cv}$ (resp  $  (Q(\xi)-i0)^{-\cv} (Q(\xi)-z)^{-1} $) near $\cv=k$. 
\end{defi} 

By application of the pull-back theorem, we immediately find that
the distribution $\pf (Q(\xi)-i0)^{-k}$ is a tempered distribution
whose wavefront set is contained in the 
\emph{singular Lagrangian} $$\{(x;\tau dQ(x)) \st Q(x)=0, \ \tau < 0 \}\cup T^*_0\mathbb{R}^n.$$ Let us briefly recall the reason why 
$\pf(Q(\xi)-i0)^{-k}$ is quasihomogeneous and 
give the equation it satisfies.
\begin{lemm}[Quasihomogeneity]
Let $\Feuler=\sum_{i=1}^n\xi_i\frac{\partial}{\partial \xi_i}$. We have the identity
$$
{\Feuler} \pf(Q(\xi)-i0)^{-k}=-2k\pf(Q(\xi)-i0)^{-k}+\res_{\cv=k}(Q(\xi)-i0)^{-\cv}
$$
and $${\Feuler}( \res_{\cv=k}(Q(\xi)-i0)^{-\cv})=-2k\res_{\cv=k}(Q(\xi)-i0)^{-\cv}.$$  Moreover, the distribution $\res_{\cv=k}(Q(\xi)-i0)^{-\cv}$ is supported at $\{0\}$.
\end{lemm}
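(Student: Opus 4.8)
The plan is to deduce all three statements from one structural fact: the family $\cv\mapsto(Q(\xi)-i0)^{-\cv}$ is meromorphic with values in $\pazocal{S}^\prime(\mathbb{R}^n)$, its poles are simple and located at $\cv\in\{\frac{n}{2},\frac{n}{2}+1,\dots\}$, and away from the poles each member is homogeneous of degree $-2\cv$. Concretely: on $\mathbb{R}^n\setminus\{0\}$, $Q$ is a submersion and $(Q(\xi)-i0)^{-\cv}=Q^*[(t-i0)^{-\cv}]$ is homogeneous of degree $-2\cv$ for every $\cv$; for $\Re\cv<\frac{n}{2}$ the homogeneity degree avoids $-n,-n-1,\dots$, so this distribution has a unique homogeneous extension to $\mathbb{R}^n$, which is precisely $(Q(\xi)-i0)^{-\cv}$. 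Hence for such $\cv$,
\[
\Feuler\,(Q(\xi)-i0)^{-\cv}=-2\cv\,(Q(\xi)-i0)^{-\cv}.
\]
Since $\Feuler$ is a differential operator with polynomial coefficients, hence continuous on $\pazocal{S}^\prime(\mathbb{R}^n)$, both sides are meromorphic in $\cv$ on all of $\mathbb{C}$; agreeing on a half-plane, they agree as meromorphic families. This single identity drives everything.

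Next I would localize at the pole $\cv=k$. Write the Laurent expansion
\[
(Q(\xi)-i0)^{-\cv}=\frac{\res_{\cv=k}(Q(\xi)-i0)^{-\cv}}{\cv-k}+\pf(Q(\xi)-i0)^{-k}+\pazocal{O}(\cv-k),
\]
whose constant term is by definition $\pf(Q(\xi)-i0)^{-k}$. Applying $\Feuler$ termwise on the left (legitimate by continuity), substituting $-2\cv=-2k-2(\cv-k)$ on the right and multiplying out, I would compare Laurent coefficients. The coefficient of $(\cv-k)^{-1}$ gives
\[
\Feuler\big(\res_{\cv=k}(Q(\xi)-i0)^{-\cv}\big)=-2k\,\res_{\cv=k}(Q(\xi)-i0)^{-\cv},
\]
i.e.\ the second identity; the coefficient of $(\cv-k)^{0}$ gives the stated transport identity for $\pf(Q(\xi)-i0)^{-k}$. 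Here one just has to keep careful track of the numerical constants and of the normalization of the residue so that the coefficient in front of $\res_{\cv=k}(Q(\xi)-i0)^{-\cv}$ in the first identity comes out as claimed.

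For the support claim, the observation is that every pole of the family ``sits at the vertex'' $\xi=0$: on $\mathbb{R}^n\setminus\{0\}$ we have $(Q(\xi)-i0)^{-\cv}=Q^*[(t-i0)^{-\cv}]$ with $(t-i0)^{-\cv}$ an \emph{entire} family of distributions on $\mathbb{R}$, and pull-back by a submersion is continuous and respects holomorphic dependence on a parameter, so $\cv\mapsto(Q(\xi)-i0)^{-\cv}$ is holomorphic on $\mathbb{R}^n\setminus\{0\}$. Therefore its Laurent principal part at $\cv=k$ vanishes on $\mathbb{R}^n\setminus\{0\}$, so $\res_{\cv=k}(Q(\xi)-i0)^{-\cv}$ is supported at $\{0\}$; by Schwartz's structure theorem it is then a finite combination of derivatives of $\delta_{\{0\}}$, necessarily homogeneous of degree $-2k$ by the second identity.

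The argument is a routine Laurent-expansion manipulation, so I do not expect a serious obstacle. The two points to watch are (i) upgrading the homogeneity relation from the half-plane $\Re\cv<\frac{n}{2}$ to an identity of meromorphic families, which is what licenses reading off Laurent coefficients, and (ii) the bookkeeping of the numerical constants and of the residue / finite-part conventions.
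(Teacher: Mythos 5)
Your proposal follows the paper's proof essentially verbatim: establish the Euler homogeneity identity $\Feuler (Q(\xi)-i0)^{-\cv}=-2\cv\,(Q(\xi)-i0)^{-\cv}$ on a half-plane of $\cv$, extend it by analytic (meromorphic) continuation, then Laurent-expand at $\cv=k$ and match coefficients; your added justification of the support claim (holomorphy of the pulled-back family on $\rr^n\setminus\{0\}$) is exactly how one fills in the paper's final assertion. The one bookkeeping point you flagged is real: carrying out the Laurent comparison actually yields $\Feuler\,\pf(Q(\xi)-i0)^{-k}=-2k\,\pf(Q(\xi)-i0)^{-k}-2\res_{\cv=k}(Q(\xi)-i0)^{-\cv}$, so the coefficient of the residue term is $-2$ rather than the $+1$ in the statement — a harmless discrepancy (the residue is supported at the origin, so it never affects the later residue computations), but your derivation, like the paper's, produces the $-2$.
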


\begin{proof}
 For non-integer $\cv$, we always have
\begin{equation}\label{e:complexhom}
{\Feuler}(Q(\xi)-i0)^{-\cv}=-2\cv (Q(\xi)-i0)^{-\cv} 
\end{equation}
since this holds true for large $-\Re\cv$  and 
extends by analytic continuation in $\cv$.

Now for $\cv$ near $k$, we use the Laurent series expansion in $\cv$ near $k$ and identifying the regular parts on both sides of \eqref{e:complexhom} yields the result.
\end{proof}

We introduce the following notation on the inverse Fourier transform side.

\begin{defi}
{ Using  Definition~\ref{d:finitepart} for the notion of finite part $\pf$, we} define
$$
\pf F_k(+i0,.):= \frac{\Gamma(k+1)}{(2\pi)^n} \int_{\mathbb{R}^n} e^{i \left\langle \xi,. \right\rangle} \pf(Q(\xi)-i0)^{-k-1} d^n\xi.
$$
\end{defi}

We now state  the main proposition of the present paragraph,
which yields asymptotic expansions for the distributions
$F_k(z,.)$. 

\begin{prop}[log-polyhomogeneity of $F_k(z,.) $]
\label{prop:logpolyF}
For every $N$, we have 
the identity
$$
(Q(\xi)-z)^{-k-1}=\sum_{p=0}^N \begin{pmatrix}
-k-1\\
p,-k-1-p
\end{pmatrix} (-1)^pz^{p} (Q(\xi)-i0)^{-(k+p+1)}+E^{\geqslant k+N+2 }+T_N(z)
$$
where $E^{\geqslant N+2+k }$ denotes the space of all distributions $T\in \pazocal{S}^\prime(\rr^n)$ such that 
$$ \lambda^{-N-2-k} T(\lambda^{-1}.)_{\lambda\in \opencl{0,1}} \mbox{ is bounded in  } \pazocal{S}^\prime(\rr^n),$$ and $T_N(z)$ is a distribution supported at $0$ depending holomorphically in $z\in \{\Im z >0\}$. 

It follows by inverse Fourier transform that
\begin{eqnarray}\label{e:expansionF}
F_k(z,.)=\sum_{p=0}^{N} \frac{(-1)^pz^{p}}{p!} {\pf F_{k+p}(+i0,.)}
+E^{\geqslant k+N+2-n }+P_N(z)
\end{eqnarray} 
where $P_N(z)$ is a polynomial function on $\mathbb{R}^n$ depending holomorphically on $z\in \{\Im z>0\}$,
hence each distribution $F_k(z,.)$ is log-polyhomogeneous.   
\end{prop}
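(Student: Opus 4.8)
The strategy is to deduce the first (distributional) identity from a Taylor expansion in $z$ combined with analytic continuation in an auxiliary homogeneity exponent, and then to obtain~\eqref{e:expansionF} by applying the normalized inverse Fourier transform term by term. First I would introduce the complex parameter $\cv$ and consider the meromorphic families $\cv\mapsto(Q(\xi)-i0-z)^{-\cv-1}$ and $\cv\mapsto(Q(\xi)-i0)^{-\cv-1-p}$ of tempered distributions, holomorphic in $z$ on $\{\Im z>0\}$. For $\Re\cv$ sufficiently negative all the quantities in sight are honest locally integrable functions, so Taylor's formula with integral remainder in $z$ about $z=0$ yields
$$
(Q(\xi)-i0-z)^{-\cv-1}=\sum_{p=0}^N\begin{pmatrix}-\cv-1\\ p,-\cv-1-p\end{pmatrix}(-1)^pz^p\,(Q(\xi)-i0)^{-\cv-1-p}+R_{N,\cv}(z),
$$
with $R_{N,\cv}(z)$ the explicit remainder proportional to $z^{N+1}\int_0^1(1-\theta)^N(Q(\xi)-i0-\theta z)^{-\cv-N-2}\,d\theta$. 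Both sides are meromorphic in $\cv$; crucially, since $\Im z>0$ the left-hand side is in fact \emph{entire} in $\cv$, because $Q(\xi)-z$ never meets $[0,+\infty)$, so the $-i0$ prescription is vacuous there.

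Next I would analytically continue this identity to $\cv=k$. By the pull-back theorem applied to the cone $\{Q=0\}\subset\rr^n$ (cf.~\cite[Thm.~3.2.4]{H} and Definition~\ref{d:finitepart}), each family $\cv\mapsto(Q(\xi)-i0)^{-\cv-1-p}$ has at worst a simple pole at $\cv=k$ with residue supported at $\{0\}$; since the left-hand side is entire, the remainder $R_{N,\cv}(z)$, being the difference of the entire left-hand side and the sum, is then meromorphic in $\cv$ with poles again supported at $\{0\}$. Taking finite parts at $\cv=k$ — legitimate because on the right-hand side the poles of the sum and of the remainder cancel — the sum contributes the finite-part distributions $\pf(Q(\xi)-i0)^{-(k+p+1)}$ with the stated coefficients, together with a few $\{0\}$-supported corrections obtained by pairing $\frac{d}{d\cv}$ of the binomial coefficients with the residues, while the remainder contributes the finite part of $R_{N,\cv}(z)$ at $\cv=k$. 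Collecting \emph{all} contributions supported at the origin into a single distribution $T_N(z)$ — which is then supported at $\{0\}$ and holomorphic in $z$, being a finite sum of powers of $z$ times fixed distributions plus the holomorphic-in-$z$ remainder — and keeping the genuinely conormal part in $E^{\geqslant k+N+2}$ (using the extension theory for weakly homogeneous distributions of~\cite{DangAHP} to perform this splitting without leaving the weak-homogeneity class) gives the first identity in the statement.

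The only real analytic point — and the main obstacle — is to check that the conormal part of $R_{N,k}(z)$ indeed lies in $E^{\geqslant k+N+2}$, uniformly in the relevant range of $z$. For this I would use the integral representation of the remainder: the exact homogeneity $Q(\lambda^{-1}\xi)=\lambda^{-2}Q(\xi)$ turns $(Q(\xi)-i0-w)^{-\cv-N-2}$ into $\lambda^{2(\cv+N+2)}(Q(\xi)-i0-\lambda^2w)^{-\cv-N-2}$, so that the prefactor $z^{N+1}$ together with uniform $\pazocal{S}^\prime(\rr^n)$-bounds for $(Q(\xi)-i0-w)^{-m}$ as $w$ runs over a neighbourhood of the segment $[0,1]\cdot z$ in the closed lower half-plane yields that $\lambda^{-N-2-k}R_{N,k}(z)(\lambda^{-1}\cdot)$ stays bounded in $\pazocal{S}^\prime(\rr^n)$ for $\lambda\in(0,1]$. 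This is precisely the kind of distribution-valued symbol estimate flagged in~\sec{ss:polhom}: since one is expanding distributions rather than smooth symbols, both the analytic continuation in $\cv$ and the scaling estimate must be carried out with care about the orders and wavefront sets involved.

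Finally,~\eqref{e:expansionF} follows by applying $\Gamma(k+1)(2\pi)^{-n}\int e^{i\langle\xi,\cdot\rangle}(\cdot)\,d^n\xi$ to the first identity term by term. The main terms turn into multiples of $\pf F_{k+p}(+i0,\cdot)$, the Gamma factors $\Gamma(k+1)/\Gamma(k+p+1)$ combining with the binomial coefficients to produce the coefficients displayed in~\eqref{e:expansionF}; the distribution $T_N(z)$, being supported at $\{0\}$, Fourier-transforms to a polynomial $P_N(z)$ on $\rr^n$, holomorphic in $z$; and the remainder passes from $E^{\geqslant k+N+2}$ to $E^{\geqslant k+N+2-n}$ by the standard shift of exponent under the Fourier transform on weakly homogeneous spaces, for which one invokes~\cite[Lem.~D.2]{Dang2020} (cf.~\cite{DangAHP}). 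Since $N$ is arbitrary, this exhibits $F_k(z,\cdot)$ as an asymptotic expansion of the type~\eqref{phexp} for scaling by the Euler field $\euler=\sum_{i=1}^n h^i\partial_{h^i}$: the $p$-th main term is quasihomogeneous of degree $2(k+p+1)-n$, the polynomial $P_N(z)$ supplies the non-negative integer degrees, and the logarithmic terms are exactly those forced by the quasihomogeneity of $\pf(Q(\xi)-i0)^{-(k+p+1)}$ for $k+p+1\geqslant\frac{n}{2}$ recorded in the Quasihomogeneity Lemma — the residue term there, being supported at $\{0\}$, becomes under Fourier transform the polynomial factor $(\euler-j)$ appearing in~\eqref{phexp}. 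Hence each $F_k(z,\cdot)$ is log-polyhomogeneous.
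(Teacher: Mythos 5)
Your strategy --- Taylor expansion in $z$ for $\Re\cv\ll 0$, analytic continuation in $\cv$ up to $\cv=k$, and absorption of the origin-supported poles into $T_N(z)$ --- is genuinely different from the paper's argument. The paper proceeds purely algebraically: it forms the candidate distribution $\sum_{p=0}^{N-1}z^p\,\pf(Q(\xi)-i0)^{-p-1}+z^N\pf\big[(Q(\xi)-i0)^{-N}(Q(\xi)-z)^{-1}\big]$, checks via the telescoping identity $Q(\xi)(Q(\xi)-i0)^{-j}=(Q(\xi)-i0)^{-j+1}$ on $\rr^n\setminus\{0\}$ that multiplying it by $Q(\xi)-z$ yields $1$ there, hence $1+T_N(z)$ on all of $\rr^n$, and then divides by the nonvanishing smooth function $Q(\xi)-z$. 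The payoff of that route is that the remainder comes out in \emph{closed form} as $z^N\pf\big[(Q-i0)^{-N}(Q-z)^{-1}\big]$, whose weak homogeneity is immediate from the extension theorem of \cite{DangAHP}; no Taylor remainder ever has to be estimated.

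The gap in your version sits exactly at the step you flag as the main obstacle. The uniform $\pazocal{S}^\prime(\rr^n)$-bound you invoke for $(Q(\xi)-i0-w)^{-m}$, $m=\cv+N+2$ at $\cv=k$, as $w$ runs over $[0,1]\cdot z$, fails near $w=0$ as soon as $m>\frac n2$: rescaling $\xi\mapsto\module{w}^{1/2}\xi$ gives $\langle(Q-w)^{-m},\varphi\rangle\sim C\,\varphi(0)\module{w}^{\frac n2-m}$, so the pairing blows up at the vertex of the cone (away from $\xi=0$ the bound does hold, by integrations by parts transversal to $\{Q=0\}$). Consequently the integral remainder $z^{N+1}\int_0^1(1-\theta)^N(Q(\xi)-i0-\theta z)^{-\cv-N-2}\,d\theta$, which is perfectly fine for $\Re\cv\ll0$, \emph{diverges} at $\cv=k$ once $k+N+2\geqslant\frac n2+1$ --- that is, for all large $N$, precisely the regime you need. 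This divergence is concentrated at $\xi=0$ but it is a genuine divergence, not an origin-supported distribution, so it cannot be swept into $T_N(z)$; and without a convergent representation of the remainder at $\cv=k$ your scaling estimate has nothing to act on. (The meromorphically continued remainder, defined as the left-hand side minus the sum, does exist, but estimating its weak homogeneity via $R_{N,\cv}(z)(\lambda^{-1}\cdot)=\lambda^{2\cv+2}R_{N,\cv}(\lambda^2z)+\cdots$ reduces you to showing $R_{N,k}(w)=O(w^{N+1})$ as $w\to0$ in $\pazocal{S}^\prime$, which is essentially the statement being proved.) The clean repair is the paper's telescoping device, which replaces the $\theta$-integral by a finite-part product with the smooth function $(Q-z)^{-1}$. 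Your final paragraph (inverse Fourier transform, identification of $P_N(z)$, and the source of the logarithmic terms in the Jordan blocks) is correct and matches the paper.
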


\begin{proof}
We work in Fourier space with 
the function $\left(Q(\xi)-z\right)^{-1}$ for $\Im z>0$.
In fact, even though $\left(Q(\xi)-z\right)^{-1}$ is  a function, its asymptotic expansion in $\xi$ will involve the quasihomogeneous distributions
$\pf(Q(\xi)-i0)^{-k}$ 
because we need to consider the distributional extension to $\mathbb{R}^n$.

We start from the expression:
\begin{eqnarray*}
\sum_{k=0}^{N-1} z^k\pf \left(Q(\xi)-i0\right)^{-k-1}+z^N\pf\left(Q(\xi)-i0\right)^{-N} \left(Q(\xi)-z\right)^{-1}
\end{eqnarray*}
which is a well-defined
distribution in $\pazocal{S}^\prime(\mathbb{R}^n)$.
The product $\left(Q(\xi)-i0\right)^{-N} \left(Q(\xi)-z\right)^{-1} \in \pazocal{D}^\prime(\mathbb{R}^n\setminus\{0\}) $ is weakly homogeneous of degree $\leqslant -N-1 $ therefore it admits a distributional extension
$\pf\left(\left(Q(\xi)-i0\right)^{-N} \left(Q(\xi)-z\right)^{-1}\right)$ which is weakly homogeneous of degree $<-N-1$ and is defined by extending the distribution $\left(Q(\xi)-i0\right)^{-N} \left(Q(\xi)-z\right)^{-1}\in \pazocal{D}^\prime(\mathbb{R}^n\setminus\{0\})$ to $\pazocal{D}^\prime(\mathbb{R}^n)$, see \cite[Thm.~1.7]{DangAHP} (cf.~\cite{Meyer}).

We easily verify that we have the identity for $\Im z>0$:
\begin{eqnarray*}
(Q(\xi)-z) \left(\sum_{k=0}^{N-1} z^k\pf \left(Q(\xi)-i0\right)^{-k-1}+z^N\left(Q(\xi)-i0\right)^{-N} \left(Q(\xi)-z\right)^{-1}\right)=1
\end{eqnarray*}
in the sense of distributions on $\mathbb{R}^n\setminus \{0\}$ (we used the key fact that $Q(\xi)(Q(\xi)-i0)^{-k}=(Q(\xi)-i0)^{-k+1}$ which holds true in the distribution sense in $\pazocal{D}^\prime(\mathbb{R}^n\setminus\{0\}$). 
{Since} the term inside the large brackets above makes sense as a distribution on $\mathbb{R}^n$, 
it follows that we have the identity
$$
(Q(\xi)-z)\left(\sum_{k=0}^{N-1} z^k\pf \left(Q(\xi)-i0\right)^{-k-1}+z^N\pf\left(Q(\xi)-i0\right)^{-N} \left(Q(\xi)-z\right)^{-1}\right)=1+T_N(z) 
$$
in the sense of tempered distributions in $\pazocal{S}^\prime(\mathbb{R}^n)$, 
where $T_N(z)$ is a distribution supported at $\{0\}$ depending holomorphically on $z\in \{\Im z>0\}$.
It follows by inverse Fourier transform
that we get:
\begin{eqnarray*}
F_{0}(z,\vert x\vert_\eta)=\sum_{k=0}^{N-1} z^k \pf F_{k}(+i0,x)+E^{\geqslant N+1-n}+\pazocal{F}^{-1}\left(T_N\right)(x),
\end{eqnarray*}
where the inverse Fourier transform 
$\pazocal{F}^{-1}\left(T_N\right)(x)$ is a polynomial function in $x$.
More generally, by the same method we find that
$$
(Q(\xi)-z)^{-k}=\sum_{p=0}^N \begin{pmatrix}
-k\\
p,-k-p
\end{pmatrix} (-1)^pz^{p} {\pf (Q(\xi)-i0)^{-(k+p)}}+E^{\geqslant k+N+1 }+T_N(z)\in \pazocal{S}^\prime(\mathbb{R}^n)
$$
where the generalized binomial coefficients are defined using the
Euler $\Gamma$ function, $E^{\geqslant N+1+k } $ denotes distributions $T\in \pazocal{S}^\prime$ 
s.t. the family $ \lambda^{-N-1-k} T(\lambda^{-1}.)_{\lambda\in \opencl{0,1}}$ is bounded in $\pazocal{S}^\prime$ and $T_N(z)$ is a distribution supported at $0$ depending holomorphically in $z\in \{\Im z>0\}$. Therefore, \eqref{e:expansionF}
follows by inverse Fourier transform.
\end{proof}

We now prove that $H_N(z)\in \pazocal{D}^\prime_\Lambda(M\times M)$   is tame log-polyhomogeneous regardless of the choice of Euler vector field $\euler$.

\begin{prop}\label{prop:feynmanlog}
Let $H_N(z)$ be the Hadamard parametrix of order $N$. Then for any Euler vector field $\euler$, there exists an $\euler$-stable neighborhood  $\pazocal{U}$ of $\Delta\subset M\times M$ such that $H_N(z)\in \pazocal{D}^\prime(\pazocal{U})$
is tame log-polyhomogeneous w.r.t.~scaling with $\euler$. 
In particular,
$$
\mathfrak{L}_s H_N(z)= \int_0^\infty e^{-t(\euler+s)}H_N(z) dt \in \pazocal{D}^\prime(\pazocal{U})
$$
is a well-defined distribution and extends as a \emph{meromorphic function} of $s\in \mathbb{C}$
with poles at {$s\in -2+n-\mathbb{N}$}. The poles at {$s=k\in \mathbb{Z}$} are simple when $k<0$ and of multiplicity {at most} $2$ when $k\geqslant 0$.
\end{prop}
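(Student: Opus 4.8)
The plan is to reduce the statement to a local computation, via a partition of unity: near any $p\in\Delta$ one works in the coordinate frame $(x^i,h^i)_{i=1}^n$ of Proposition~\ref{p:normalform}, in which $\euler=\sum_{i=1}^n h^i\partial_{h^i}$, with $\pazocal{U}$ taken to be an $\euler$-stable neighbourhood as produced in Lemma~\ref{l:stable}; the cutoff error $r_N(z)$ of \eqref{eq:PzHN} and the pieces of $H_N(z)$ supported away from $\Delta$ are smooth there and scale like $\pazocal{O}((e^{-t})^{+\infty})$, so they may be discarded. Since Proposition~\ref{p:normalform} provides such coordinates for \emph{any} Euler vector field, the argument below applies verbatim to every $\euler$. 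On the local chart I would use the oscillatory-integral representation $H_N(z,x,h)|_\Omega=\sum_{k=0}^N\int_{\mathbb{R}^n}e^{i\langle\xi,h\rangle}B_{2(k+1)}(z,x,h;\xi)\,d^n\xi$ of \eqref{Bksymbol}, together with the homogeneity of $B_{2(k+1)}$ of degree $-2k-2$ under $(\xi,z)\mapsto(\lambda\xi,\lambda^2z)$: the substitution $\xi\mapsto e^{t}\xi$ gives
\[
\bigl(e^{-t\euler}H_N\bigr)(z,x,h)|_\Omega=\sum_{k=0}^N e^{t(n-2k-2)}\int_{\mathbb{R}^n}e^{i\langle\xi,h\rangle}\,B_{2(k+1)}\bigl(e^{-2t}z,\,x,\,e^{-t}h;\,\xi\bigr)\,d^n\xi ,
\]
so the problem becomes that of expanding, as $t\to+\infty$, the distribution-valued symbol $B_{2(k+1)}(e^{-2t}z,x,e^{-t}h;\xi)$ in the two small parameters $e^{-t}h$ and $e^{-2t}z$.

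For this expansion I would separate the smooth prefactor $\chi u_k(x,h)\module{M(x,h)}^{-1}$ from the distributional factor $\bigl(Q(({}^{t}M(x,h))^{-1}\xi)-z\bigr)^{-k-1}$. The prefactor is Taylor-expanded in $e^{-t}h$ about $h=0$, producing integer powers $e^{-t|\alpha|}$ with coefficients smooth on $\Omega$ and a remainder of arbitrarily high order in $e^{-t}$. For the distributional factor, note that since $\Im z>0$ the denominator never vanishes, so it is in fact smooth in $(h,\xi)$; I would Taylor-expand it in $h$ about $h=0$, where $M(x,0)=\mathrm{id}$, the chain rule (Fa\`a di Bruno) writing each coefficient as a finite sum of (polynomials in $\xi$) times $\bigl(Q(\xi)-z\bigr)^{-k-1-j}$, and then expand each $\bigl(Q(\xi)-e^{-2t}z\bigr)^{-k-1-j}$ in the parameter $e^{-2t}z$ by Proposition~\ref{prop:logpolyF} (with $z$ replaced by $e^{-2t}z$). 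This yields a finite sum of terms $(e^{-2t}z)^p\,\pf\bigl(Q(\xi)-i0\bigr)^{-(k+1+j+p)}$, plus a distribution supported at $\{0\}$ depending holomorphically on $e^{-2t}z$, plus a remainder in the weak-homogeneity class $E^{\geqslant}$. Along the way one must check that every distribution produced has wavefront set inside the singular Lagrangian $\{(\xi;\tau dQ(\xi))\st Q(\xi)=0,\ \tau<0\}\cup T^*_0\mathbb{R}^n$, so that multiplication by polynomials and the inverse Fourier transform back to position space remain legitimate, and that the $E^{\geqslant}$ remainders are controlled \emph{uniformly} in $(x,h)\in\Omega$; here one invokes the refined extension and pull-back results of \cite{DangAHP} rather than the classical symbol calculus.

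Finally I would pass back to position space: the inverse Fourier transform turns each $\pf(Q(\xi)-i0)^{-m}$ into $\pf F_{m-1}(+i0,h)$ and the $\{0\}$-supported pieces into polynomials in $h$, and by the quasihomogeneity of the finite parts recalled above, $\pf F_{m-1}(+i0,\cdot)$ carries a rank-$\leqslant 2$ Jordan block exactly when $m\geqslant\frac n2$, i.e.\ when $\res_{\cv=m}(Q(\xi)-i0)^{-\cv}\neq0$. Collecting the powers $e^{t(n-2k-2)}\cdot e^{-t|\alpha|}\cdot e^{-2tp}$ and the logarithmic ($t$-linear) factors carried by these Jordan blocks, one reads off a tame log-polyhomogeneous expansion of $e^{-t\euler}H_N(z)$: the only resonances lie in $\{-2+n-j:j\in\nn\}$, the Jordan blocks that occur have rank at most $2$, and the log-carrying terms enter only at the homogeneity degrees for which the corresponding pole of $\mathfrak{L}_s H_N(z)$ falls at $k\geqslant 0$ — the same bookkeeping as in Proposition~\ref{p:pseudopoly}. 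Controlling the remainders by combining the $E^{\geqslant}$ estimates with the H\"older regularity of the inverse Fourier transform of the leftover symbols (and letting $N$ and the orders of the finite Taylor expansions grow) then gives the asserted meromorphic continuation of $\mathfrak{L}_s H_N(z)=\int_0^\infty e^{-t(\euler+s)}H_N(z)\,dt$, with simple poles for $k<0$ and poles of multiplicity at most $2$ for $k\geqslant0$.

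The step I expect to be the main obstacle is the expansion of the distribution-valued symbol in the second paragraph. Unlike in the pseudodifferential case, one is Taylor-expanding distributions whose singular support — the light cone $\{Q=0\}$ pulled back by the near-identity family $({}^{t}M(x,e^{-t}h))^{-1}$ — moves with the expansion parameter; the expansion must therefore be organised so as to keep all terms conormal to the fixed singular Lagrangian and, above all, to control the weak-homogeneity remainders $E^{\geqslant}$ uniformly in the base variables $(x,h)$. This is precisely the ``special care'' flagged in the introduction, and it is the technical core of the proof.
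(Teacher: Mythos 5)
Your proposal follows the paper's proof essentially step for step: localization and normal-form coordinates for $\euler$, the Kuranishi oscillatory representation with the distribution-valued amplitudes $B_{2(k+1)}$, a double expansion in $z$ (via Proposition~\ref{prop:logpolyF}) and in $h$ (Taylor/Fa\`a di Bruno), uniform control of the weakly homogeneous remainders in $C^\infty(\Omega)\otimes\pazocal{S}'(\mathbb{R}^n)$, and identification of the rank-$2$ Jordan blocks with the residues of $(Q(\xi)-i0)^{-\cv}$ at integer exponents $\geqslant \frac n2$. The only deviation is that you Taylor-expand in $h$ before expanding in $z$, whereas the paper expands in $z$ first so that the $h$-expansion acts on the $z$-independent finite parts $\pf(Q(\t M(x,h)^{-1}\xi)-i0)^{-m}$ — a purely organizational difference, harmless provided (as you yourself flag) the $h$-Taylor remainders are controlled uniformly as $\Im z\to 0^{+}$.
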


In the proof we will frequently make use of  smooth functions with values in tempered distributions in the following sense. 

\begin{defi}\label{d:smoothfunctionsvaluedindistributions}
If $\Omega \subset M$ is an open set, we denote by $C^\infty(\Omega)\otimes \pazocal{S}^\prime(\mathbb{R}^n)$ the space of all $U\in \pazocal{D}^\prime(\Omega\times \mathbb{R}^n)$ such that 
for all $\varphi_1\in C^\infty_{\rm c}(\Omega)$, $\varphi_2\in \pazocal{S}(\mathbb{R}^n)$,
$$
\left\langle U,\varphi_1\otimes \varphi_2 \right\rangle_{\Omega\times \mathbb{R}^n}=
\int_\Omega \left\langle U(x,.),\varphi_2\right\rangle_{\mathbb{R}^n} \varphi_1(x)\dvol_g(x)
$$
where $\Omega \ni x\mapsto  \left\langle U(x,.),\varphi_2\right\rangle_{\mathbb{R}^n}$ is $C^\infty$. 
\end{defi}

\begin{refproof}{Proposition \ref{prop:feynmanlog}} We employ a three steps asymptotic expansion. The first one comes from the Hadamard expansion, which is of the form
\begin{eqnarray*}
\sum_{k=0}^N \int_{\mathbb{R}^n} \dots e^{i\left\langle\xi,h\right\rangle} (Q(\t M(x,h)^{-1}\xi)-z)^{-k-1} \dots d^n\xi +\text{highly regular term }.
\end{eqnarray*}

\step{1} {(First expansion, in $z$).} \ The idea is to study the asymptotics
of
$$(Q(\t M(x,\lambda h)^{-1}\lambda^{-1}\xi)-z)^{-k-1}$$ when $\lambda\rightarrow 0^+$.
We start from the function
$(Q(\t M(x,h)^{-1}\xi)-{z})^{-k-1} $ where $M$ is the invertible matrix 
depending smoothly on $(x,h)$ which was obtained by the Kuranishi trick.
Then each term $(Q(\t M(x,h)^{-1}\xi)-z)^{-k-1} $
appearing in the sum is expanded in powers of $z$ times homogeneous terms in $\xi$ 
{thanks to Proposition~\ref{prop:logpolyF}}.
The expansion in powers of $z$ reads: 
$$
\bea
(Q(\t M(x, h)^{-1}\xi)-z)^{-k-1}
=\sum_{p=0}^N {(-1)^p} z^p \begin{pmatrix}
-k-1\\
p,-k-1-p
\end{pmatrix} 
\pf(Q(\t M(x, h)^{-1}\xi)-i0)^{-k-1-p} \\ +R_N(z,x,h;\xi)
\eea
$$
where $R_N(z,x,h;\xi)\in C^\infty(\Omega )\otimes \pazocal{S}^\prime(\mathbb{R}^n)$  is weakly homogeneous of degree $\geqslant -k-1-N$ in $\xi$, i.e.
\begin{eqnarray*}
 \lambda^{-N-k-1}R_N(z,x,h;\lambda^{-1}.)_{\lambda\in \opencl{0,1}} \text{ is bounded in }\pazocal{S}^\prime(\mathbb{R}^n)
\end{eqnarray*}
uniformly in $(x,h)\in K\subset \Omega$ where $K$ is a compact set.   

\step{2}   {(Second expansion, in $h$).} \ The key idea is to note that $\pf(Q(\t M(x, h)^{-1}\xi)-i0)^{-k-1-p}\in C^\infty(\Omega )\otimes \pazocal{S}^\prime(\mathbb{R}^n)$ since it is the pull-back of $\pf(Q(\xi)-i0)^{-k-1-p}\in \pazocal{S}^\prime(\mathbb{R}^n)$ by the submersion
$\Omega\times \mathbb{R}^{n*} \ni (x,h;\xi) \mapsto (\t M(x,h))^{-1}\xi\in \mathbb{R}^{n*}$. So by the push-forward theorem, for any test function $\chi\in \pazocal{S}(\mathbb{R}^n)$, the wave front set of 
$$
(x,h)\in \Omega\mapsto \left\langle \pf(Q(\t M(x, h)^{-1}.)-i0)^{-k-1-p},\chi\right\rangle $$ is empty which implies $\pf(Q(\t M(x, h)^{-1}\xi)-i0)^{-k-1-p}\in C^\infty(\Omega )\otimes \pazocal{S}^\prime(\mathbb{R}^n)$.
The important subtlety is that when we differentiate $(Q(\t M(x,h)^{-1}\xi)-i0)^{-k} $ in $(x,h)$, we lose 
distributional order in $\xi$. This is why we are not in usual spaces of symbols where differentiating in $(x,h)$ does not affect the regularity in $\xi$. However, all the $(x,h)$ derivatives $D_{x,h}^\beta(Q(\t M(x,h)^{-1}\xi)-i0)^{-k-1} $ are quasihomogeneous in $\xi$ of degree $-2k-2$:
$$D_{x,h}^\beta\big(Q(\t M(x,h)^{-1}\lambda^{-1}\xi )-i0\big)^{-k-1}=\lambda^{2k+2}D_{x,h}^\beta\big(Q(\t M(x,h)^{-1}\xi)-i0\big)^{-k}  .$$
We then expand each term
$\pf(Q(\t M(x, h)^{-1}\xi)-i0)^{-k-p-1}$ using a Taylor expansion with remainder in the variable $h$ combined with the Fa\`a di Bruno formula { (which serves to compute higher derivatives of the composition of two functions)}. {Applying the Fa\`a di Bruno formula in our particular case, we get} for all $\cv$,
$$
 \pf(Q(\t M(x, h)^{-1}\xi)-i0)^{-\cv} =\sum_{\ell, \vert \beta_1\vert+\dots+\vert\beta_\ell\vert\leqslant N} h^\beta Q_\beta( x,h;\xi)
|_{(x,0)}+I_N(z,x,h;\xi).
$$
where we denoted
$$
\bea
Q_\beta( x,h;\xi) & = \frac{(-\cv)\dots(-\cv-\ell-1) \left(\partial_h^{\beta_1}Q(\t M^{-1}(x,h)\xi)\right)\dots
\left(\partial_h^{\beta_\ell}Q(\t M^{-1}(x,h)\xi)\right) }{\beta_1!\dots\beta_\ell!\ell!} 
 \fantom  \times \pf(Q(\xi)-i0)^{-\cv-\ell}.
\eea
$$

Each  $h^\beta Q_\beta( x,h;\xi) |_{(x,0)}$
term is \emph{polynomial} in $h$ and a distribution in $\xi$ homogeneous of degree $-2\cv$ of order $\plancher{\Re\cv}+\ell+1$.
Let us describe the integral remainder,
\begin{eqnarray*}
I_N(z,x,h;\xi)= \sum_{\vert\beta\vert=N+1} \frac{(N+1)h^\beta}{\beta !} \left(\int_0^1 (1-s)^{N}\partial^\beta_h\pf(Q(\t M(x,sh)^{-1}\xi)-i0)^{-\cv}ds\right) 
\end{eqnarray*}
where the derivative
$\partial^\beta_h\pf(Q(\t M(x,sh)^{-1}\xi)-i0)^{-\cv}$ can  be expanded by Fa\`a di Bruno formula as above. 
We deduce that the term $\partial^\beta_h\pf(Q(\t M(x,sh)^{-1}\xi)-i0)^{-\cv}$ 
is continuous in both $(s,h)$ with values in distributions in $\xi$ quasihomogeneous of degree $-2\cv$
of order $\plancher{\Re\cv}+N+2$ uniformly in $(x,sh)$. Therefore 
$I_N(z,x,h;\xi)$ is continuous in $(x,h)$ with values in distributions in $\xi$ quasihomogeneous of degree $-2\cv$
of order $\plancher{\Re\cv}+N+2$ uniformly in $(x,h)$. 

\step{3} {(Combination of both expansions).} \ Combining both expansions yields an expansion of 
$$\big(Q(\t M(x,\lambda h)^{-1}\lambda^{-1}\xi)-z\big)^{-k-1} $$ in powers of $z$ and of $h$
with remainder that we  write shortly as:
$$
\bea
&(Q(\t M(x,h)^{-1}\xi)-z)^{-k-1}
\\ & = \sum_{\ell,\sum_{i=1}^\ell\vert \beta_i\vert+2k+2+2p\leqslant N} C_{\beta,\ell,p,k}(x,\xi) z^p h^\beta \pf(Q(\t M(x, 0)^{-1}\xi)-i0)^{-k-1-\ell-p}  +R_{k,N}(z,x,h;\xi),
\eea
$$
where $C_{\beta,\ell,p,k}$ depends smoothly on $x$ and is a universal polynomial in $\xi$ of degree $2\ell$, $\beta$ is a multi-index, the coefficients  
of $C_{\beta,\ell,p,k}$ are combinatorially defined from the above expansions depending on derivatives of $M(x,h)$ in $h$ at $h=0$.
It is a crucial fact that the remainder $R_{k,N}(x,h;\xi)$ is a distribution weakly homogeneous in $\xi$ of degree $\geqslant k$,  and
vanishes at order at least $N-k$ in $h$.  
The important fact is that  
$R_{k,N}(z,x,h;\xi)$  is an element in $C^\infty\left( \Omega\right)\otimes \pazocal{S}^\prime\left(\mathbb{R}^{n}\right)$ and 
$\big( \lambda^{-N-1} R_{k,N}(z,x,\lambda h;\frac{\xi}{\lambda})\big)_{\lambda\in \opencl{0,1}} $ is 
bounded in $C^\infty\left( \Omega\right)\otimes \pazocal{S}^\prime\left(\mathbb{R}^{n}\right)$.

Finally, we get
$$
\bea
H_N(z)&=\sum_{2(k+1)+2p+\vert \beta\vert\leqslant N} \frac{k!\left(\chi u_k\right)(x,h)h^\beta\module{M(x,h)}^{-1}{(-1)^p}z^p}{(2\pi)^n\beta !} \begin{pmatrix}
-k-1\\
p,-k-1-p
\end{pmatrix}  \fantom\phantom{=========} \times \int_{\mathbb{R}^n}e^{i\left\langle\xi,h\right\rangle}
\partial^\beta_h \pf(Q(\t M(x, h)^{-1}\xi)-i0)^{-k-1-p}|_{(x,0)}d^n\xi
\fantom +\int_{\mathbb{R}^n}e^{i\left\langle\xi,h\right\rangle}R_{1,N}(z,x,h;\xi)d^n\xi  +R_{2,N}(z,x,h),
\eea
$$
where $R_{2,N}(z,x,h)\in \pazocal{C}^{s}\left(\Omega\right)$ is a function of 
H\"older regularity $s$ which can be made arbitrarily large by choosing $N$ large enough, the term 
$R_{1,N}(z,x,h;\xi)$ is an element in $C^\infty\left( \Omega\right)\otimes \pazocal{S}^\prime\left(\mathbb{R}^{n}\right)$,
such that the family
$\big( \lambda^{-N-1} R_{1,N}(z,x,\lambda h;\frac{\xi}{\lambda})\big)_{\lambda\in \opencl{0,1}} $ is 
bounded in $C^\infty\left( \Omega\right)\otimes \pazocal{S}^\prime\left(\mathbb{R}^{n}\right)$.
It follows that $\Pi_0\left(R_{1,N}  \right)=\euler\Pi_0\left(R_{2,N} \right)=0$ if $N$ is chosen large enough.
It is clear from the construction   that the terms
$\int_{\mathbb{R}^n}e^{i\left\langle\xi,h\right\rangle}
\partial^\beta_h \pf(Q(\t M(x, h)^{-1}\xi)-i0)^{-k-1-p}|_{(x,0)}d^n\xi$ are quasihomogeneous
and multiplying by smooth functions preserves the tame log-polyhomogeneity. This finishes the proof.
\end{refproof}

\subsection{Residue computation and conclusions} \label{ss:rcc} Now that we know $H_N(z)$ is tame log-polyhomogeneous,  our next objective is to extract the term
$\euler \Pi_0(H_N(z))$ and express it in terms of the Hadamard coefficients $(u_k)_{k=0}^\infty$.


 We first prove a key lemma related to the extraction of the dynamical residues which shows  that the residue of many terms vanishes.  { Recall that the notion of finite part $\pf$ was introduced in Definition~\ref{d:finitepart}.} 

\begin{lemm}\label{l:vanishlemma2}
Let $\euler=h^i\partial_{h^i}$, $\varphi\in C^\infty(\Omega)$, $\beta=(\beta_1,\dots,\beta_\ell)\in \nn^\ell$, $k\in \mathbb{N}$ and let $P$ be a homogeneous polynomial on $\mathbb{R}^n$ of { even
degree}.
Then the residue
$$
\euler \Pi_0\left(h^\beta \varphi\int_{\mathbb{R}^n}P(\xi) e^{i\left\langle\xi,h\right\rangle}\pf(Q(\xi)-i0)^{-k}d^n\xi \right)
$$
vanishes if $-2k+\deg(P)\neq -n$ or  $\vert \beta\vert>0$. On the other hand, in the special case $-2k=-n$, 
$$
\euler \Pi_0\left(\varphi \int_{\mathbb{R}^n}  e^{i\left\langle\xi,h\right\rangle}\pf(Q(\xi)-i0)^{-k} d^n\xi\right)=\varphi(x,0)\int_{\mathbb{S}^{n-1}}
 (Q(\xi)-i0)^{-k} \iota_\euler d^n\xi.
$$
\end{lemm}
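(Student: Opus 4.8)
\begin{refproof}{Lemma \ref{l:vanishlemma2}}
The plan is to carry the computation to the Fourier side, where $\pf(Q(\xi)-i0)^{-k}$ is explicitly quasihomogeneous, and to read off $\euler\Pi_0$ from a single scaling differential equation. Write $\Theta(h):=\int_{\mathbb{R}^n}P(\xi)\,e^{i\langle\xi,h\rangle}\,\pf(Q(\xi)-i0)^{-k}\,d^n\xi$, so the distribution under $\Pi_0$ is $\varphi(x,h)\,h^\beta\Theta(h)$. First I would Taylor expand $\varphi(x,h)=\sum_{|\gamma|\leqslant N^\prime}\frac{h^\gamma}{\gamma!}(\partial_h^\gamma\varphi)(x,0)+\varphi_{N^\prime}(x,h)$ to a large order $N^\prime$, with $\varphi_{N^\prime}$ vanishing to order $N^\prime+1$ at $h=0$. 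Since $\Theta$ is log-polyhomogeneous (by Proposition~\ref{prop:logpolyF}) its scaling exponents are bounded below, so for $N^\prime$ large $\varphi_{N^\prime}\,h^\beta\Theta$ is weakly homogeneous of positive degree w.r.t.~$\euler$, hence its Laplace transform is holomorphic at $s=0$ and $\Pi_0(\varphi_{N^\prime}h^\beta\Theta)=0$. The Taylor monomials turn $\varphi\,h^\beta$ into $\psi(x)\,h^{\beta+\gamma}$ with $\psi$ depending on $x$ only; since $\psi(x)$ commutes with $\euler$ and $\Pi_0$ (which act in the $h$ variable) and $\beta+\gamma=0$ forces $\beta=0$, it suffices to compute $\euler\Pi_0(h^\beta\Theta)$ for an arbitrary multi-index $\beta$ (and for $\beta=0$ in the non-vanishing case).

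The core is then the scaling equation for $\Theta_\beta:=h^\beta\Theta$. By the quasihomogeneity lemma applied to $\pf(Q-i0)^{-k}$, together with Euler's identity $\Feuler P=(\deg P)\,P$, the tempered distribution $w:=P\,\pf(Q-i0)^{-k}$ satisfies $\Feuler w=(\deg P-2k)\,w+P\,\res_{\cv=k}(Q(\xi)-i0)^{-\cv}$, and the last term is supported at $\{0\}$. Taking the inverse Fourier transform, using that it intertwines $\Feuler$ with $-(n+\euler)$ (integration by parts in $\xi$; cf.~Remark~\ref{l:scalanomal}) and sends distributions supported at $\{0\}$ to polynomials, one obtains $\euler\Theta=(2k-\deg P-n)\,\Theta+p$ for a polynomial $p$; multiplying by $h^\beta$ and using $\euler h^\beta=|\beta|h^\beta$ gives
\beq\label{eq:scaleq}
\euler\Theta_\beta=b\,\Theta_\beta+q,\qquad b:=2k+|\beta|-\deg P-n,\quad q:=h^\beta p,
\eeq
where $q$ is a polynomial with $q(0)=0$ whenever $\beta\neq0$. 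Feeding \eqref{eq:scaleq} into $\mathfrak{L}_s\Theta_\beta=\int_0^\infty e^{-ts}(e^{-t\euler}\Theta_\beta)\,dt$ and integrating by parts in $t$ yields the meromorphic identity $(s+b)\,\mathfrak{L}_s\Theta_\beta=\Theta_\beta-\sum_{j\geqslant0}q_j\,(s+j)^{-1}$, where $q=\sum_jq_j$ is the splitting into homogeneous parts. If $b\neq0$, then $\Pi_0(\Theta_\beta)=\res_{s=0}\mathfrak{L}_s\Theta_\beta=-q_0/b$ is constant in $h$, hence $\euler\Pi_0(\Theta_\beta)=0$; if $b=0$, then $\Pi_0(\Theta_\beta)=\Theta_\beta-\sum_{j\geqslant1}q_j/j$, hence $\euler\Pi_0(\Theta_\beta)=\euler\Theta_\beta-\sum_{j\geqslant1}q_j=q_0=q(0)$, which vanishes whenever $\beta\neq0$. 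As ``$b=0$ and $\beta=0$'' is precisely ``$-2k+\deg P=-n$ and $\beta=0$'', the vanishing assertion follows after reinstating $\psi(x)$ and summing the finitely many Taylor terms.

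It remains to treat $-2k=-n$, $P=1$, $\beta=0$, where $b=0$. As computed above, the left-hand side then equals the scaling anomaly $\euler\Pi_0(\Theta)$ of $\Theta=\int_{\mathbb{R}^n}e^{i\langle\xi,h\rangle}\pf(Q(\xi)-i0)^{-n/2}d^n\xi$. Since $\pf(Q-i0)^{-n/2}$ is a distributional extension of the $\Feuler$-invariant top-degree current $(Q(\xi)-i0)^{-n/2}d^n\xi$ on $\mathbb{R}^n\setminus\{0\}$, whose wavefront set lies in $\{(\xi;\tau dQ(\xi))\st Q(\xi)=0,\ \tau<0\}$, Remark~\ref{l:scalanomal} (equivalently Proposition~\ref{p:residue}) identifies this anomaly with the residue pairing $\int_{\mathbb{S}^{n-1}}(Q(\xi)-i0)^{-n/2}\iota_\euler d^n\xi$. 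Reinstating the factor $\varphi(x,0)$ yields the stated formula.

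The step I expect to be the main obstacle is the derivation of \eqref{eq:scaleq}: since $P\,\pf(Q-i0)^{-k}$ is a distribution-valued ``symbol'' rather than a classical one, one must check carefully that multiplying by $h^\beta$ on the physical side, i.e.~differentiating on the Fourier side, only lowers the distributional order and enlarges the Jordan block at $\{0\}$ without destroying temperedness, so that \eqref{eq:scaleq} genuinely holds with $q$ a polynomial; and in the critical case one must verify that $q(0)$ is exactly the sphere integral, which is where the hypothesis on $\wf(\pf(Q-i0)^{-k})$ enters, making the product with $\one_B$ and the restriction to $\mathbb{S}^{n-1}$ meaningful as in Proposition~\ref{p:residue} and Corollary~\ref{l:vanishing1}.
\end{refproof}
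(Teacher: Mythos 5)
Your proof is correct and follows the same overall strategy as the paper's: Taylor-expand $\varphi$ in $h$, discard the remainder using its positive weak homogeneity under $e^{-t\euler}$, and read off $\euler\Pi_0$ of each monomial term from the Fourier-side quasihomogeneity of $\pf(Q(\xi)-i0)^{-k}$, with Remark~\ref{l:scalanomal} supplying the sphere integral in the critical case. The one genuine organizational difference is the treatment of the subcase where the total degree is critical but $\vert\beta\vert>0$: the paper converts $h^\beta$ into $i^{-\vert\beta\vert}\partial_\xi^\beta$ under the Fourier integral and invokes Corollary~\ref{l:vanishing1} (vanishing of the sphere integral of $\partial_\xi^\beta$ applied to a homogeneous distribution of degree $>-n$), whereas you keep $h^\beta$ on the physical side, so that the source term $q=h^\beta p$ in your scaling equation manifestly has vanishing constant term and the anomaly $q_0=q(0)$ vanishes for free. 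The two arguments are equivalent --- the constant term of $q$ is precisely the residue pairing that Corollary~\ref{l:vanishing1} computes --- but yours packages all three cases into the single partial-fraction identity $(s+b)\,\mathfrak{L}_s\Theta_\beta=\Theta_\beta-\sum_j q_j(s+j)^{-1}$, which is arguably cleaner and makes the Jordan-block structure at $s=0$ explicit. One small slip in your closing remark: multiplication by $h^\beta$ corresponds to $\partial_\xi^\beta$ on the Fourier side, which \emph{raises} (not lowers) the distributional order; this is harmless, since temperedness and quasihomogeneity modulo distributions supported at the origin are preserved, which is all your argument needs.
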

\begin{remark}\label{rem:evalat0}
Note that the projector $\Pi_0$ has the effect of evaluating the test function
$\varphi$ at $h=0$.
\end{remark}

\begin{proof}
The important fact is that $P(\xi)\pf(Q(\xi)-i0)^{-k}$ is a quasihomogeneous 
distribution in the $\xi$ variable.
By Taylor expansion of $\varphi$ in the $h$ variable, we get for any $N$:
$$ \bea
& h^\beta\varphi \int_{\mathbb{R}^n} P(\xi) e^{i\left\langle\xi,h\right\rangle}\pf(Q(\xi)-i0)^{-k} d^n\xi
\\ &=\sum_{\vert\beta_2\vert\leqslant N} \frac{h^{\beta+\beta_2}}{\beta_2!} \partial_h^{\beta_2}\varphi(x,0) \int_{\mathbb{R}^n}  e^{i\left\langle\xi,h\right\rangle}P(\xi)\pf(Q(\xi)-i0)^{-k} d^n\xi
\fantom +\sum_{\vert \beta_2\vert=N+1} h^{\beta+\beta_2} R_{\beta_2}(x,h)  \int_{\mathbb{R}^n}  e^{i\left\langle\xi,h\right\rangle}
P(\xi)\pf(Q(\xi)-i0)^{-k} d^n\xi.
\eea
$$
By scaling, if $\vert \beta_2\vert=N+1$ then
$$\left\langle { e^{-t\euler}}\bigg( h^{\beta+\beta_2} R_{\beta_2}(x,h)  \int_{\mathbb{R}^n} e^{i\left\langle\xi,h\right\rangle}\pf(Q(\xi)-i0)^{-k} d^n\xi\bigg), \psi \right\rangle=\pazocal{O}(e^{-t((N+1)-2k+n -\varepsilon)}) $$ 
for all $\varepsilon>0$ which accounts for the corrective behaviours of polynomials in $t$ produced by the Jordan blocks. Then choosing $N$ large enough, we can take the Laplace transform 
$$\int_0^\infty e^{-tz}  \left\langle { e^{-t\euler}}\bigg(  h^{\beta+\beta_2} R_{\beta_2}(x,h)  \int_{\mathbb{R}^n} e^{i\left\langle\xi,h\right\rangle}\pf(Q(\xi)-i0)^{-k} d^n\xi\bigg), \psi \right\rangle dt $$
holomorphic for $z$ near $0$. Therefore since the projector $\Pi_0$ is defined by contour integration using Cauchy's formula, we get
that 
\begin{eqnarray*} \Pi_0\left(  h^{\beta+\beta_2} R_{\beta_2}(x,h)   \int_{\mathbb{R}^n} e^{i\left\langle\xi,h\right\rangle}\pf(Q(\xi)-i0)^{-k} d^n\xi\right)=0.
\end{eqnarray*}

The provisional conclusion is that we need to inspect the expression
$$
\bea 
&\Pi_0\left( h^\beta  \int_{\mathbb{R}^n}  e^{i\left\langle\xi,h\right\rangle}P(\xi) \pf(Q(\xi)-i0)^{-k} d^n\xi\right) \\ &=
\Pi_0\left( i^{-\vert\beta\vert} \int_{\mathbb{R}^n}  e^{i\left\langle\xi,h\right\rangle} \partial_\xi^\beta P(\xi) \pf(Q(\xi)-i0)^{-k} d^n\xi\right).
\eea
$$
If $-\vert\beta \vert-2k+\deg(P)\neq -n$, the current $ \partial_\xi^\beta P(\xi) \pf(Q(\xi)-i0)^{-k} d^n\xi$ is quasihomogeneous of degree $-\vert\beta \vert-2k+n+\deg(P) $ hence its inverse Fourier transform is also quasihomogeneous of degree $p\neq 0$ and therefore its image under the projector 
$\Pi_0$ vanishes. 

If $\vert \beta\vert+2k=n, \vert\beta\vert>0$, then {Corollary}~\ref{l:vanishing1} together with Lemma~\ref{l:scalanomal} imply that
\begin{eqnarray*}
\euler\Pi_0\left(i^{-\vert\beta\vert} \int_{\mathbb{R}^n}  e^{i\left\langle\xi,h\right\rangle} \partial_\xi^\beta \pf(Q(\xi)-i0)^{-k} d^n\xi \right)
= \int_{\vert \xi\vert=1}  \partial_\xi^\beta \pf(Q(\xi)-i0)^{-k} \iota_\Feuler d^n\xi =0.
\end{eqnarray*}
Finally, when $2k=n$ and $\vert\beta\vert=0$ Lemma~\ref{l:scalanomal} implies that
the residue equals
$$\euler\Pi_0\left( \int_{\mathbb{R}^n}  e^{i\left\langle\xi,h\right\rangle} \pf(Q(\xi)-i0)^{-k} d^n\xi \right)=\int_{\mathbb{S}^{n-1}}
(Q(\xi)-i0)^{-k} \iota_\Feuler d^n\xi$$
as claimed. 
\end{proof}

Now, Lemma \ref{l:vanishlemma2} applied to $H_N(z)$ gives first
$$ \bea
&\euler\Pi_0 \big( H_N(z)\big)\\
&=\sum_{2k+2+2p+\vert \beta\vert\leqslant N}\begin{pmatrix}
-k-1\\
p,-k-1-p
\end{pmatrix} k! {(-1)^p}  z^p
\fantom \times  \euler\Pi_0\left( \frac{\left(\chi u_k\right)(x,h)h^\beta\module{M(x,h)}^{-1}}{(2\pi)^n\beta !}  \int_{\mathbb{R}^n}e^{i\left\langle\xi,h\right\rangle}
\partial^\beta_h \pf(Q(\t M(x, h)^{-1}\xi)-i0)^{-k-1-p}|_{(x,0)}d^n\xi\right)
\\
&=
\sum_{ 2k+2+2p=n }
 \euler\Pi_0\bigg( \frac{k!\left(\chi u_k\right)(x,h)\module{M(x,h)}^{-1}{(-1)^p}  z^p}{(2\pi)^n} \begin{pmatrix}
-k-1\\
p,-k-1-p
\end{pmatrix}  \bigg. \\  &\bigg.\phantom{==========================}\times \int_{\mathbb{R}^n}e^{i\left\langle\xi,h\right\rangle}
 \pf(Q(\xi)-i0)^{-k-1-p}|_{(x,0)}d^n\xi\bigg)
\eea
$$
where we used the fact that the projector $\Pi_0$ evaluates $\left(\chi u_k\right)(x,h)\module{M(x,h)}^{-1}$ at $h=0$ by Remark~\ref{rem:evalat0} and that $M(x,0)=\id, \chi(x,0)=1$, and then we obtain the shorter expression~\footnote{{We used here the identity 
$\begin{pmatrix}
-k-1\\
p,-k-1-p
\end{pmatrix} k!= \frac{(-k-1)\dots (-k-p) }{p!}k!=(-1)^p\frac{k+p!}{p!} $ .}} 
\beq\label{eq:shorter} 
\bea 
\euler\Pi_0 \big(H_N(z)\big)&=\sum_{ 2k+2p+2=n } \begin{pmatrix}
-k-1\\
p,-k-1-p
\end{pmatrix}  
  \frac{k!u_k(x,0){(-1)^p}z^p}{(2\pi)^n}\int_{\mathbb{S}^{n-1}}
(Q(\xi)-i0)^{-\frac{n}{2}} \iota_\Feuler d^n\xi\\
&=\sum_{ 2k+2p+2=n } 
  \frac{(k+p)!u_k(x,0)z^p}{p!(2\pi)^n}\int_{\mathbb{S}^{n-1}}
(Q(\xi)-i0)^{-\frac{n}{2}} \iota_\Feuler d^n\xi.
\eea 
\eeq
Finally, to get a more direct expression for  $\euler\Pi_0 \big( H_N(z)\big)$ we need to compute the integral on the r.h.s.

\begin{lemm}[Evaluation of the residue by Stokes theorem]  \label{lem:Stokes}
We have the identity:
\begin{equation}\label{restokes}
\int_{\mathbb{S}^{n-1}}
( -\xi_1^2+\xi_2^2+\dots+\xi_n^2 -i0)^{-\frac{n}{2}} \iota_\Feuler d^n\xi=\frac{2i\pi^{\frac{n}{2}}}{\Gamma(\frac{n}{2})}.
\end{equation}
\end{lemm}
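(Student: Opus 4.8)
The plan is to reduce \eqref{restokes} to a one‑dimensional Euler‑type integral. First I would note that along $\mathbb{S}^{n-1}$ the vector field $\Feuler=\sum_i\xi_i\partial_{\xi_i}$ is the outward unit normal, so $\iota_\Feuler d^n\xi$ restricts to the round hypersurface measure $d\sigma$, while $-\xi_1^2+\xi_2^2+\cdots+\xi_n^2=1-2\xi_1^2$ on $\mathbb{S}^{n-1}$; since $\wf\bigl((1-2\xi_1^2-i0)^{-n/2}\bigr)$ does not meet $N^*\mathbb{S}^{n-1}$, the restriction is legitimate and the left side of \eqref{restokes} equals $\bigl\langle(1-2\xi_1^2-i0)^{-n/2},d\sigma\bigr\rangle$. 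Using the coarea formula in the variable $\xi_1$ and then the substitution $u=t^2$ this becomes
\[
|\mathbb{S}^{n-2}|\int_{-1}^{1}(1-2t^2-i0)^{-n/2}(1-t^2)^{\frac{n-3}{2}}\,dt
=|\mathbb{S}^{n-2}|\int_{0}^{1}u^{-\frac12}(1-u)^{\frac{n-3}{2}}(1-2u-i0)^{-n/2}\,du .
\]

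To evaluate this last pairing I would reinstate the regulator and write, for $\mu>0$,
\[
(1-2u-i\mu)^{-n/2}=(1-i\mu)^{-n/2}\bigl(1-z\,u\bigr)^{-n/2},\qquad z=\tfrac{2}{1-i\mu}\notin[1,\infty),
\]
so that Euler's integral representation of the Gauss hypergeometric function and the identity ${}_2F_1(a,b;a;z)=(1-z)^{-b}$ give
\[
\int_{0}^{1}u^{-\frac12}(1-u)^{\frac{n-3}{2}}(1-zu)^{-n/2}\,du
=\mathrm{B}\!\left(\tfrac12,\tfrac{n-1}{2}\right){}_2F_1\!\left(\tfrac n2,\tfrac12;\tfrac n2;z\right)
=\mathrm{B}\!\left(\tfrac12,\tfrac{n-1}{2}\right)(1-z)^{-\frac12}.
\]
Now $1-z=\tfrac{-1-i\mu}{1-i\mu}\to-1$ from the lower half‑plane as $\mu\to0^+$, whence $(1-z)^{-1/2}\to(-1-i0)^{-1/2}=e^{i\pi/2}=i$ and $(1-i\mu)^{-n/2}\to1$; letting $\mu\to0^+$ (and using that $(1-2u-i0)^{-n/2}$ is the distributional limit of $(1-2u-i\mu)^{-n/2}$, checked against the test function $u^{-1/2}(1-u)^{(n-3)/2}\one_{(0,1)}$ by dominated convergence) the pairing equals $i\,\mathrm{B}(\tfrac12,\tfrac{n-1}{2})$. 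Finally $|\mathbb{S}^{n-2}|\,\mathrm{B}(\tfrac12,\tfrac{n-1}{2})=\frac{2\pi^{(n-1)/2}}{\Gamma(\frac{n-1}{2})}\cdot\frac{\Gamma(\frac12)\Gamma(\frac{n-1}{2})}{\Gamma(\frac n2)}=\frac{2\pi^{n/2}}{\Gamma(\frac n2)}=|\mathbb{S}^{n-1}|$, which yields \eqref{restokes}.

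One can equally read this ``by Stokes'' as in Proposition~\ref{p:residue}: the left side of \eqref{restokes} is the homological residue of the scale‑invariant current $(Q(\xi)-i0)^{-n/2}d^n\xi$ on $\rr^n\setminus\{0\}$ (with $Q(\xi)=-\xi_1^2+\cdots+\xi_n^2$), hence by the Lie--Cartan formula and the quasihomogeneity lemma it is a universal multiple of $\res_{\cv=\frac n2}(Q(\xi)-i0)^{-\cv}$, which in turn can be computed from the closed‑form Fourier transform of $(Q(\xi)-i0)^{-\cv}$ obtained from the Euclidean Riesz kernel by Wick rotation of the first variable. In either route the only genuinely delicate point is the bookkeeping of the $-i0$ prescription and of the branch of $z\mapsto z^{-1/2}$: this is exactly what produces the factor $i$ on the right side of \eqref{restokes} and distinguishes it from the Riemannian analogue, where the same integral is the real number $|\mathbb{S}^{n-1}|$. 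As a reassurance, for $n=2$ the identity reduces to $\int_0^{2\pi}(-\cos2\phi-i0)^{-1}\,d\phi=2\pi i$, which follows from a one‑line residue computation in $z=e^{i\phi}$.
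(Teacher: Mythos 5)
Your proposal is correct, and it reaches \eqref{restokes} by a genuinely different route than the paper. The paper's proof is a Wick-rotation/Stokes argument on a complex $n$-chain: the closed holomorphic form $(z_1^2+\cdots+z_n^2)^{-n/2}\iota_{\sum z_i\partial_{z_i}}dz_1\wedge\cdots\wedge dz_n$ is integrated over the rotated spheres $P_\theta=\{(e^{i\theta}\xi_1,\xi_2,\dots,\xi_n)\}$, Stokes' theorem shows the integral is independent of $\theta$ and equals $\Vol(\mathbb{S}^{n-1})$ at $\theta=0$, and the factor $i$ emerges from the Jacobian $e^{-i\theta}$ at $\theta=-\frac{\pi}{2}$ together with the convergence $(e^{2i\theta}\xi_1^2+\cdots)^{-n/2}\to(Q(\xi)-i0)^{-n/2}$ in $\pazocal{D}'_\Gamma$. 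You instead restrict to the sphere directly (legitimate, as you note, since the conormal of $\{Q=0\}$ misses $N^*\mathbb{S}^{n-1}$ — the same transversality the paper needs at the end of its proof), reduce by coarea to a one-dimensional Beta-type integral, and evaluate it in closed form at finite regularization $\mu>0$ via Euler's integral and the degenerate identity for the Gauss hypergeometric function with $c=a$; the factor $i$ then comes from the branch computation $(-1-i0)^{-1/2}=i$. Your method is more elementary and yields an explicit answer for every $\mu>0$ (a useful cross-check, as your $n=2$ residue computation shows); the paper's method is coordinate-free in the angular variables and fits the Wick-rotation machinery used elsewhere (cf.\ \cite[Lem.~D.1]{Dang2020}). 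One small imprecision: near $u=\tfrac12$ dominated convergence does not literally apply, since $(1-2u-i\mu)^{-n/2}$ is not dominated there and the limit is a genuine distribution. This is harmless: either keep $\mu>0$ throughout and pass to the limit only in the closed-form answer $(1-i\mu)^{-n/2}\,\mathrm{B}(\tfrac12,\tfrac{n-1}{2})(1-z)^{-1/2}$, using that restriction to $\mathbb{S}^{n-1}$ is continuous on $\pazocal{D}'_\Gamma$ so the regularized sphere integrals do converge to the left-hand side of \eqref{restokes}; or split the pairing into a neighborhood of $u=\tfrac12$ (smooth weight against the boundary-value distribution) and the complement (absolutely convergent integrals). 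Your closing "by Stokes" paragraph is only a sketch, but the hypergeometric argument stands on its own.
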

\begin{proof}
The proof follows by a Wick rotation argument as in  \cite[\S8.3]{Dang2020}.
We  complexify the whole setting and 
define the holomorphic $(n-1,0)$-form:
\begin{eqnarray*}
\omega=\left( z_1^2+\dots+z_n^2\right)^{-\frac{n}{2}}\iota_{\sum_{i=1}^n z_i\partial_{z_i}} dz_1\wedge \dots\wedge dz_n \in \Omega^{n-1,0}\left(  U \right),
\end{eqnarray*}
where $U$ is the Zariski open subset $\{z\in \mathbb{C}^n \st Q(z)\neq 0 \}$. By the Lie--Cartan formula
$$
\pazocal{L}_{\sum_{i=1}^n z_i\partial_{z_i}} =d \iota_{\sum_{i=1}^n z_i\partial_{z_i}}+\iota_{\sum_{i=1}^n z_i\partial_{z_i}}d,
$$
and
$$ d \left( z_1^2+\dots+z_n^2\right)^{-\frac{n}{2}} dz_1\wedge \dots\wedge dz_n=0 \in\Omega^{n,1}(U),
$$
hence
$$
\bea
&\pazocal{L}_{\sum_{i=1}^n z_i\partial_{z_i}} \left( z_1^2+\dots+z_n^2\right)^{-\frac{n}{2}} dz_1\wedge \dots\wedge dz_n \\ &= d\left( z_1^2+\dots+z_n^2\right)^{-\frac{n}{2}}\iota_{\sum_{i=1}^n z_i\partial_{z_i}} dz_1\wedge \dots\wedge dz_n=0,
\eea
$$ 
so the differential form $\omega$ is closed in $\Omega^{n-1,0}(U)$.
For every $\theta\in \clopen{0,-\frac{\pi}{2}}$,  
we define the $n$-chain
$$ E_\theta=\{ (e^{iu}z_1,z_2,\dots,z_n) \st (z_1,\dots,z_n)\in \mathbb{S}^{n-1}\subset\mathbb{R}^n, \ u\in [\theta,0] \}  $$
which is contained in $\mathbb{S}^{2n-1}$.
We denote by $\partial$ the boundary operator acting on de Rham currents, under some choice of orientation on $E_\theta$, we have the equation 
$$
\partial E_\theta=[P_\theta  ] -[P_0],
$$
where $[P_\theta]$ denotes the current of integration on the $(n-1)$-chain 
$$
P_\theta=\{(e^{i\theta}z_1,z_2,\dots,z_n) \st (z_1,\dots,z_n)\in \mathbb{S}^{n-1}\subset \mathbb{R}^n\}.
$$
By Stokes theorem, 
\begin{eqnarray*}
0= \int_{E_\theta} d\omega =\int_{ \partial E_\theta } \omega=\int_{P_\theta} \omega-\int_{P_0}\omega
\end{eqnarray*}
where the integration by parts is well-defined
since for $\theta\in \clopen{0,-\frac{\pi}{2}}$, the zero locus of 
$ \sum_{i=1}^nz_i^2$ never meets $P_\theta$ so we are integrating well-defined smooth forms~\footnote{Indeed, if $\theta\in \open{0,-\frac{\pi}{2}}$ and $e^{2i\theta}z_1^2+z_2^2+\dots+z_n^2=0$ then $\sin(2\theta)z_1^2=0$, hence $z_1=0$ and $\sum_{i=1}^n z_i^2=0$, which contradicts the fact that $(z_1,\dots,z_n)\in \mathbb{S}^{n-1}$.}.
 
We define the linear automorphism
$ T_\theta: (z_1,\dots,z_n)\mapsto (e^{i\theta}z_1,\dots,z_n) $ and
note that 
$$
\bea
\int_{P_\theta} \omega&=\int_{P_0} T_\theta^*\omega=
e^{i\theta}\int_{\mathbb{S}^{n-1}}
( e^{i2\theta}\xi_1^2+\xi_2^2+\dots+\xi_n^2)^{-\frac{n}{2}} \iota_\Feuler d^n\xi \\ &=\int_{\mathbb{S}^{n-1}}
(\xi_1^2+\xi_2^2+\dots+\xi_n^2)^{-\frac{n}{2}} \iota_\Feuler d^n\xi=\Vol(\mathbb{S}^{n-1}).
\eea
$$

By \cite[Lem.~D.1]{Dang2020}, 
$$( e^{i2\theta}\xi_1^2+\xi_2^2+\dots+\xi_n^2)^{-\frac{n}{2}}\rightarrow (Q(\xi)-i0)^{-\frac{n}{2}}  \mbox{ in } \pazocal{D}^\prime_\Gamma(\mathbb{R}^n\setminus \{0\})$$ as $\theta\rightarrow -\frac{\pi}{2}$,   where
$\Gamma=\{ (\xi;\tau dQ(\xi)) \st Q(\xi)=0,\tau<0 \}$  is the half-conormal of the cone $\{Q=0\}$. Since $\Gamma\cap N^*\mathbb{S}^{n-1}=\emptyset$, 
in the limit we obtain $$\lim_{\theta\rightarrow -\frac{\pi}{2}^+} \int_{\mathbb{S}^{n-1}}
( e^{i2\theta}\xi_1^2+\xi_2^2+\dots+\xi_n^2)^{-\frac{n}{2}} \iota_\Feuler d^n\xi=  \left\langle [\mathbb{S}^{n-1}], (Q(\xi)-i0)^{-\frac{n}{2}} \iota_\Feuler d^n\xi \right\rangle $$ where the distribution pairing is well-defined
by transversality of wavefront sets.
From this we conclude  \eqref{restokes}.
\end{proof}

Combining  \eqref{eq:shorter}  with  Lemma \ref{lem:Stokes} gives us  
\beq\label{eq:shorter2}
\bea 
\euler\Pi_0 \big( H_N(z)\big)&=\sum_{ 2k+2p+2=n } 
  \frac{(k+p)!u_k(x,0)z^p}{p!(2\pi)^n}\int_{\mathbb{S}^{n-1}}
(Q(\xi)-i0)^{-\frac{n}{2}} \iota_\Feuler d^n\xi\\
&=\frac{2i\pi^{\frac{n}{2}}}{\Gamma(\frac{n}{2})}\sum_{ 2k+2p+2=n } 
  \frac{(k+p)!u_k(x,0)z^p}{p!(2\pi)^n},
\eea 
\eeq
from which we obtain { the following result.

\begin{prop}
Let $H_N(z)$ be the Hadamard parametrix of order $N$. Then for any Euler vector field $\euler$, the
dynamical residue satisfies
$$
\resdyn \big(H_N(z)\big)=i\sum_{p=0}^{\frac{n}{2}-1}
\frac{z^pu_{\frac{n}{2}-p-1}(x,0)}{p! 2^{n-1}\pi^{\frac{n}{2}}}.
$$
In particular,  $\resdyn \big(H_N(z)\big)$ is independent on the choice of Euler vector field $X$.
\end{prop}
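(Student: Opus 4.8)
The plan is to simply collect the computations already carried out in \sec{ss:polhom}--\sec{ss:rcc}. By Proposition~\ref{prop:feynmanlog}, for any Euler vector field $\euler$ there is an $\euler$-stable neighborhood on which $H_N(z)$ is tame log-polyhomogeneous, so $\Pi_0(H_N(z))$ is a well-defined distribution and $\euler\Pi_0(H_N(z))$ makes sense. The first step is to observe that the identity \eqref{eq:shorter2} already exhibits $\euler\Pi_0(H_N(z))$, in each normal-form coordinate patch $(x,h)$ around a diagonal point, as an explicit \emph{smooth function} of $x$ alone,
$$
\euler\Pi_0\big(H_N(z)\big)=\frac{2i\pi^{\frac{n}{2}}}{\Gamma(\frac{n}{2})}\sum_{2k+2p+2=n}\frac{(k+p)!\,u_k(x,0)\,z^p}{p!\,(2\pi)^n}.
$$
In particular it is locally constant in $h$, so the pull-back $\iota_\Delta^*\big(\euler\Pi_0(H_N(z))\big)$ is unambiguously defined. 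This is the only point requiring a bit of care, and it is automatic here: the rank-$2$ Jordan block structure of Proposition~\ref{prop:feynmanlog} (equivalently, the fact proved there that the remainder terms $R_{1,N}$ and $R_{2,N}$ are annihilated by $\Pi_0$ and by $\euler\Pi_0$ respectively for $N$ large) means that $\euler\Pi_0(H_N(z))$ is genuinely of the above form rather than a singular distribution.

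The second step is a bookkeeping simplification. Restricting the displayed formula to the diagonal replaces $u_k(x,0)$ by $u_k|_\Delta(x)$. Parametrizing the constraint $2k+2p+2=n$ by $k=\frac n2-1-p$ with $p$ running from $0$ to $\frac n2-1$, one has $k+p=\frac n2-1$, hence $(k+p)!=\Gamma(\frac n2)$ and the $\Gamma(\frac n2)$ in the prefactor cancels. What remains of the constant is $\frac{2\pi^{n/2}}{(2\pi)^n}=\frac{1}{2^{n-1}\pi^{n/2}}$, and one reads off
$$
\resdyn\big(H_N(z)\big)=\iota_\Delta^*\big(\euler\Pi_0(H_N(z))\big)
=i\sum_{p=0}^{\frac n2-1}\frac{z^p\,u_{\frac n2-p-1}(x,0)}{p!\,2^{n-1}\pi^{\frac n2}},
$$
which is the asserted formula.

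Finally, independence of the Euler vector field $\euler$ is immediate from this expression: the right-hand side involves only the Hadamard coefficients $u_k$ restricted to the diagonal (together with $z$ and $n$), none of which depends on $\euler$. Since $\euler$ entered only through the choice of normal-form coordinates used to compute $\Pi_0$, and the end result does not see those coordinates, the dynamical residue of $H_N(z)$ is the same for every Euler vector field. I do not expect a serious obstacle here, since all the analytic input---the oscillatory integral representation of Lemma~\ref{l:kuranishi}, the two-parameter expansion in $(z,h)$ with controlled remainders (Proposition~\ref{prop:feynmanlog}), the vanishing of the spurious residues (Lemma~\ref{l:vanishlemma2}), and the Wick-rotation evaluation (Lemma~\ref{lem:Stokes})---has already been established; what is left is the index substitution and the observation that the diagonal restriction is well defined.
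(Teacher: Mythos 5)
Your proposal is correct and follows essentially the same route as the paper: the proposition is obtained there exactly by combining \eqref{eq:shorter} with Lemma~\ref{lem:Stokes} to get \eqref{eq:shorter2}, then performing the index substitution $k+p=\frac n2-1$ so that $(k+p)!=\Gamma(\frac n2)$ cancels the prefactor and $\frac{2\pi^{n/2}}{(2\pi)^n}=\frac{1}{2^{n-1}\pi^{n/2}}$ yields the stated formula, with independence of $\euler$ read off from the fact that only the Hadamard coefficients on the diagonal appear. Your additional remark that $\euler\Pi_0(H_N(z))$ is locally constant in $h$, so the diagonal pull-back is unambiguous, matches the analogous observation the paper makes in the pseudodifferential case.
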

}

\section{Residues of local and spectral Lorentzian zeta functions} \label{section5}

\subsection{Hadamard parametrix for complex powers} \label{ss:hc} As previously, we consider the wave operator $P=\square_g$ on a  { time-oriented}  Lorentzian manifold $(M,g)$ of even dimension $n$. Just as the Hadamard parametrix $H_N(z)$ is designed to approximate Feynman inverses of $P-z$ near the diagonal, we can construct a more general parametrix  $H_N^{(\cv)}(z)$ for $\cv\in\cc$ which is meant as an approximation (at least formally) of complex powers $(P-z)^{-\cv}$.  

To motivate the definition of $H_N^{(\cv)}(z)$, let us recall that if $A$ is a self-adjoint operator in a Hilbert space  then for all $z=\mu+i\varepsilon$ with $\mu\in\rr$ and $\varepsilon>0$,
$$
(A-z)^{-\cv}=\frac{1}{2\pi i}\int_{\gamma_\varepsilon} (\lambda-i\varepsilon)^{-\cv} (A-\mu-\lambda)^{-1} d\lambda
$$
in the strong operator topology   (see e.g.~ \cite[App.~B]{Dang2020}).  The contour of integration  $\gamma_\varepsilon$ is represented in Figure \ref{fig:contour} and can be written as $\gamma_\varepsilon= \tilde\gamma_\varepsilon+i\varepsilon$, where
      \beq
      \tilde\gamma_{\varepsilon} = e^{i(\pi-\theta)}\opencl{-\infty,\textstyle\frac{\varepsilon}{2}}\cup \{\textstyle\frac{\varepsilon}{2} e^{i\omega}\, | \, \pi-\theta<\omega<\theta\}\cup  e^{i\theta}\clopen{\textstyle\frac{\varepsilon}{2},+\infty}
      \eeq
       goes from $\Re \lambda\ll 0$ to  $\Re \lambda\gg 0$ in the upper half-plane (for some fixed  $\theta\in\open{0,\pid}$).
 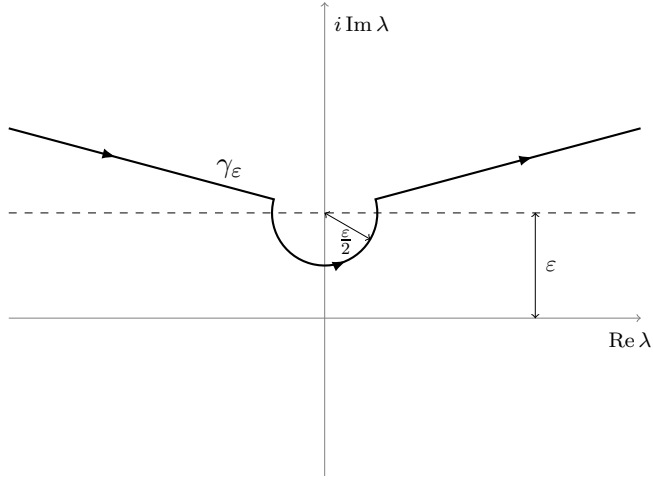
\begin{figure}
 \begin{tikzpicture}[scale=1.4]
 \def\bigradius{3}
 \def\incangle{15}
 
 \def\littleradius{0.5}
 
 \draw [help lines,->] (-1.0*\bigradius, 0) -- (1.0*\bigradius,0);
 \draw [help lines,->] (0, -0.5*\bigradius) -- (0, 1.0*\bigradius);
 
 \begin{scope}[shift={(0,2*\littleradius)}]
  \node at (-0.9,0.42) {$\gamma_{\varepsilon}$};
\path[draw,line width=0.8pt,decoration={ markings,
      mark=at position 0.15 with {\arrow{latex}}, 
      mark=at position 0.53 with {\arrow{latex}},
      mark=at position 0.85 with {\arrow{latex}}},postaction=decorate] (-\bigradius,{\bigradius*tan(\incangle)})   -- (-\incangle:-\littleradius) arc (180-\incangle:360+\incangle:\littleradius)   -- (\bigradius,{\bigradius*tan(\incangle)});
      \path[draw,line width=0.2pt,postaction=decorate,<->] (0,0)   -- (330:\littleradius);
\end{scope}

\path[draw,line width=0.2pt,postaction=decorate,dashed] (-\bigradius,2*\littleradius)   -- (\bigradius,2*\littleradius);

\path[draw,line width=0.2pt,postaction=decorate,<->] (2,0)   -- (2,2*\littleradius);
\node at (2.15,\littleradius){\scalebox{0.8}{$\varepsilon$}};
\node at (0.2,1.45*\littleradius){\scalebox{0.8}{$\frac{\varepsilon}{2}$}};

 \node at (2.9,-0.2){$\scriptstyle \Re \lambda$};
 \node at (0.35,2.8) {$\scriptstyle i \Im \lambda$};
 
 \end{tikzpicture}
 \caption{\label{fig:contour}The contour $\gamma_\varepsilon$ used to write $(A-i\varepsilon)^{-\cv}$ as an integral of the resolvent $(A-\lambda)^{-1}$ for $A$ self-adjoint.} 
 \end{figure}
This suggests immediately to set 
$$
\bea
H_N^{(\cv)}(z;.)&\defeq \frac{1}{2\pi i}\int_{\gamma_\varepsilon} (\lambda-i\varepsilon)^{-\cv} H_N(\mu+\lambda,.) d\lambda \\
& =\sum_{\varm=0}^N \chi u_\varm\frac{1}{2\pi i}\int_{\gamma_{\varepsilon}}(\lambda-i\varepsilon)^{-\cv}  \mathbf{F}_\varm(\mu+\lambda,.) d\lambda
\eea
$$
provided that the r.h.s.~makes sense. For $\Re \cv >0$ the integral converges by  the estimate in \cite[Lem.~6.1]{Dang2020}. More generally, we can evaluate the integral thanks to the identity
$$
\frac{1}{2\pi i}\int_{\gamma_\varepsilon}(\lambda-i\varepsilon)^{-\cv} \mathbf{F}_\varm(\mu+\lambda,.) d\lambda =\frac{(-1)^\varm\Gamma(-\cv+1)}{\Gamma(-\cv-\varm+1)\Gamma(\cv+\varm)}   \mathbf{F}_{\varm+\cv-1}(\mu+i\varepsilon,.)
$$
shown in  \cite[\S7.1]{Dang2020}, and use it to analytically continue $H_N^{(\cv)}(z)=H_N^{(\cv)}(\mu+i\varepsilon)$. This gives 
$$
H_N^{(\cv)}(z,.) = 
 \sum_{\varm=0}^N  u_\varm(.)\frac{(-1)^\varm\Gamma(-\cv+1)}{\Gamma(-\cv-\varm+1)\Gamma(\cv+\varm)} \mathbf{F}_{\varm+\cv-1}(z,.)
 $$
as a distribution in a neighborhood of $\Delta\subset M\times M$.




From now on the analysis in \secs{ss:polhom}{ss:rcc} can be applied with merely minor changes. For the sake of brevity we write `$\sim$' to denote identities which hold true modulo remainders as those discussed in \secs{ss:polhom}{ss:rcc}, which do not contribute to residues. In particular we can write
$$
H_N^{(\cv)}(z) \sim \sum_{k=0}^N u_k \frac{\cv\dots(\cv+k-1)}{(2\pi)^n} \int_{\mathbb{R}^n} e^{i\left\langle\xi,h \right\rangle}
(Q(\t M^{-1}(x,h)\xi)-i0)^{-k-\cv} \module{M(x,h)}^{-1} d^n\xi.
$$
Expanding in $z$ yields
$$ 
\bea
H_N^{(\cv)}(z)&\sim \sum_{k=0}^N\sum_{p=0}^\infty u_k  (-1)^pz^p  \begin{pmatrix}
-k-\cv
\\
p
\end{pmatrix}   \frac{\cv\dots(\cv+k-1)}{(2\pi)^n} \fantom \times  \int_{\mathbb{R}^n} e^{i\left\langle\xi,h \right\rangle}
(Q(\t M^{-1}(x,h)\xi)-i0)^{-k-\cv-p} \module{M(x,h)}^{-1} d^n\xi
\\ & \sim  \sum_{  k=0}^\infty \sum_{p=0}^\infty z^p  u_k \frac{\cv\dots(\cv+k+p-1)}{p!(2\pi)^n} \fantom\times\int_{\mathbb{R}^n} e^{i\left\langle\xi,h \right\rangle} 
(Q(\t M^{-1}(x,h)\xi)-i0)^{-k-\cv-p} \module{M(x,h)}^{-1} d^n\xi.
\eea
$$
We  take the dynamical residue and in view of   Lemma \ref{l:vanishlemma2},   only the    
terms with $\cv+k+p=\frac{n}{2}$  survive. We find that for $\cv=0,\dots,\n2$, the dynamical residue $\resdyn\big(H_N^{(\cv)}(z)\big)$ equals
$$ \bea
&\sum_{k+p+\cv=\frac{n}{2}}
z^p   \frac{\cv\dots(\frac{n}{2}-1)}{p!(2\pi)^n}  \resdyn\left( u_k\int_{\mathbb{R}^n} e^{i\left\langle\xi,h \right\rangle}
(Q(\t M^{-1}(x,h)\xi)-i0)^{-k-\cv-p} \module{M(x,h)}^{-1} d^n\xi\right)
\\
&=\sum_{p=0}^{\frac{n}{2}-\cv}
z^p   \frac{\cv\dots(\frac{n}{2}-1)}{p!(2\pi)^n}  \resdyn\left( u_{\frac{n}{2}-p-\cv}\int_{\mathbb{R}^n} e^{i\left\langle\xi,h \right\rangle}
(Q(\t M^{-1}(x,h)\xi)-i0)^{-\frac{n}{2}} \module{M(x,h)}^{-1} d^n\xi\right)\\
&=\frac{2i\pi^{\frac{n}{2}}}{\Gamma(\frac{n}{2})}\sum_{p=0}^{\frac{n}{2}-\cv}
\frac{z^p}{p!}   \frac{\cv\dots(\frac{n}{2}-1)}{(2\pi)^n}u_{\frac{n}{2}-p-\cv}(x,x).
\eea
$$
In consequence, we obtain 
\beq\label{eq:resdynHa}
\resdyn\big(H_N^{(\cv)}(z)\big)=
i\sum_{p=0}^{\frac{n}{2}-\cv}
\frac{z^pu_{\frac{n}{2}-p-\cv}(x,x)}{p!(\cv-1)!2^{n-1}\pi^{\frac{n}{2}}} . 
\eeq
On the other hand, from \cite[\S8.3.1]{Dang2020} we know that for $N$ sufficiently large
\beq\label{eq:resdynHa2}
\res_{\alpha'=\alpha} \big( \iota^*_\Delta H_N^{(\alpha')}(z)\big)=
i\sum_{p=0}^{\frac{n}{2}-\cv}
\frac{z^p u_{\frac{n}{2}-p-\cv}(x,x)}{p!(\cv-1)!2^{n}\pi^{\frac{n}{2}}}  
\eeq
where the residue is understood in the sense of complex analysis.
We summarize this as a proposition.

\begin{proposition} For any Euler vector field $\euler$, there exists  an $\euler$-stable neighborhood $\pazocal{U}$ of $\Delta\subset M\times M$ such that $H_N^{(\cv)}(z)\in \pazocal{D}^\prime(\pazocal{U})$ 
is tame log-polyhomogeneous w.r.t.~$\euler$. The dynamical residue $\resdyn\big(H_N^{(\cv)}(z)\big)$ is {independent of} $\euler$ and satisfies
\beq\label{eq:resres}
\resdyn\big(H_N^{(\cv)}(z)\big)={2}\res_{\alpha'=\alpha} \big( \iota^*_\Delta H_N^{(\alpha')}(z)\big)
\eeq
where the residue on the r.h.s.~is understood in the sense of complex analysis.
For $\alpha=0,\dots,\n2$ it has the explicit expression \eqref{eq:resdynHa}.
\end{proposition}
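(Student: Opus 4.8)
The plan is to assemble this proposition from three ingredients, all of which have essentially been established in the preceding sections: (i) the oscillatory integral representation of $H_N^{(\cv)}(z)$ in normal-form coordinates for a given Euler field $\euler$, obtained by the same Kuranishi trick as in Lemma~\ref{l:kuranishi}; (ii) the tame log-polyhomogeneity, which follows by repeating the three-step expansion (in $z$, in $h$, and their combination) from the proof of Proposition~\ref{prop:feynmanlog}; and (iii) the residue computation, which is a direct application of Lemma~\ref{l:vanishlemma2} together with the Wick-rotation identity of Lemma~\ref{lem:Stokes}. The displays in \secs{ss:polhom}{ss:rcc} already carry out (iii) for $H_N^{(\cv)}(z)$, yielding the explicit formula \eqref{eq:resdynHa}, so the proof of the proposition really only needs to record these steps and then match \eqref{eq:resdynHa} against \eqref{eq:resdynHa2}.

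Concretely, I would first observe that the closed-form evaluation of the contour integral against $\mathbf{F}_\varm(\mu+\lambda,\cdot)$ from \cite[\S7.1]{Dang2020} expresses $H_N^{(\cv)}(z,\cdot)$ as a finite sum $\sum_{\varm=0}^N u_\varm(\cdot)\,c_{\varm,\cv}\,\mathbf{F}_{\varm+\cv-1}(z,\cdot)$ with explicit coefficients $c_{\varm,\cv}$. Since each $\mathbf{F}_{\varm+\cv-1}(z,\cdot)$ is, by Lemma~\ref{l:kuranishi}, an oscillatory integral with a distribution-valued amplitude homogeneous in $(z,\xi)$, the argument of Proposition~\ref{prop:feynmanlog} applies verbatim with the shift $k\mapsto k+\cv-1$: expanding $(Q(\t M^{-1}(x,h)\xi)-z)^{-k-\cv}$ in powers of $z$ via Proposition~\ref{prop:logpolyF}, then Taylor-expanding the $h$-dependence of $\pf(Q(\t M^{-1}(x,h)\xi)-i0)^{-\cv'}$ with the Faà di Bruno formula, one obtains a finite sum of terms $h^\beta\varphi(x,h)\int e^{i\langle\xi,h\rangle}P(\xi)\pf(Q(\xi)-i0)^{-\cv'}d^n\xi$ plus a weakly homogeneous remainder whose $\Pi_0$ and $\euler\Pi_0$ vanish for $N$ large. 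This gives the $\euler$-stable $\pazocal{U}$ and the tame log-polyhomogeneity. Then Lemma~\ref{l:vanishlemma2} kills every term except those with $\vert\beta\vert=0$, $\varphi$ evaluated at $h=0$ (Remark~\ref{rem:evalat0}), and degree balance $\cv+k+p=\tfrac n2$; the surviving integral $\int_{\mathbb{S}^{n-1}}(Q(\xi)-i0)^{-n/2}\iota_\Feuler d^n\xi$ is evaluated by Lemma~\ref{lem:Stokes} to be $2i\pi^{n/2}/\Gamma(\tfrac n2)$, producing \eqref{eq:resdynHa}.

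Finally, independence of $\euler$ and the identity \eqref{eq:resres} follow by comparing \eqref{eq:resdynHa} with \eqref{eq:resdynHa2}: the right-hand sides differ exactly by a factor of $2$, which is the content of \eqref{eq:main} restated at the parametrix level. Since \eqref{eq:resdynHa2} is a statement of pure complex analysis about $\iota^*_\Delta H_N^{(\alpha')}(z)$ — extracted from the meromorphic structure of $\alpha'\mapsto \mathbf{F}_{\varm+\alpha'-1}(z,0)$ in \cite[\S8.3.1]{Dang2020} — and its right-hand side manifestly does not involve $\euler$, the equality \eqref{eq:resres} forces $\resdyn\big(H_N^{(\cv)}(z)\big)$ to be $\euler$-independent as well. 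I expect the only genuinely delicate point to be the bookkeeping in step~(ii): because the ``symbols'' $\pf(Q(\t M^{-1}(x,h)\xi)-i0)^{-\cv'}$ are distribution-valued and lose $\xi$-order under $(x,h)$-differentiation, one must track the quasihomogeneity degrees carefully to be sure the remainders are genuinely negligible under $\Pi_0$; but this is precisely the mechanism already implemented in \secs{ss:polhom}{ss:rcc}, and for the parametrix of complex powers it requires only the substitution $k\rightsquigarrow k+\cv-1$ together with the observation that the extra $\Gamma$-factor coefficients $c_{\varm,\cv}$ are holomorphic in $\cv$ away from the integers, so no new poles are introduced.
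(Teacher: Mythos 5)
Your proposal is correct and follows essentially the same route as the paper: the paper's proof of this proposition is precisely the displayed computation in \sec{ss:hc} that precedes it, namely the closed-form evaluation of the contour integral giving $H_N^{(\cv)}(z)$ as a finite sum of $u_\varm\,c_{\varm,\cv}\,\mathbf{F}_{\varm+\cv-1}(z,\cdot)$, the repetition of the expansions of \secs{ss:polhom}{ss:rcc} with the shift $k\mapsto k+\cv-1$, the application of Lemma~\ref{l:vanishlemma2} and Lemma~\ref{lem:Stokes} to obtain \eqref{eq:resdynHa}, and the comparison with \eqref{eq:resdynHa2} to deduce \eqref{eq:resres} (the $\euler$-independence being immediate since \eqref{eq:resdynHa} involves only the Hadamard coefficients).
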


\begin{remark} The parametrix $H_N^{(\cv)}(i\varepsilon)$ is interpreted as a local (and for the moment purely formal) approximation of $(\square_g-i \varepsilon)^{-\cv}$, and  similarly if we define 
$$
\zeta_{g,\varepsilon}^{\loc}(\cv)=  \iota^*_\Delta H_{N(\alpha)}^{(\alpha)}(i \varepsilon)
$$
where $N(\alpha)$ is taken sufficiently large, $\zeta_{g,\varepsilon}^{\loc}(\cv)$ can be seen as a local  approximation of the Lorentzian spectral zeta function density $\zeta_{g,\varepsilon}(\cv)$   studied in the next section.
\end{remark}



\subsection{From local to spectral zeta functions}\label{ss:lg} 

Let us now analyze what happens in situations when $P=\square_g$ (or  strictly speaking, $P-i \varepsilon$) has a well-defined spectral zeta function density $\zeta_{g,\varepsilon}(\cv)$ in the following sense.

\bed Suppose $P$ is a self-adjoint extension of $\square_g$ acting on $C_\c^\infty(M)$. Then, the spectral zeta function density of $P-i\varepsilon$ is the meromorphic continuation of
$$
\cv\mapsto \zeta_{g,\varepsilon}(\cv)= \iota^*_\Delta \big((P-i \varepsilon)^{-\cv} \big),
$$
defined initially for  $\Re \cv$ sufficiently large, where $\iota^*_\Delta$ is the pull-back of the Schwartz kernel to the diagonal $\Delta\subset M\times M$.
\eed

It is a priori not clear whether the definition is useful at all because even if a self-adjoint extension $P$ exists, it is by far not evident { whether the Schwartz kernel of $(P-i \varepsilon)^{-\cv}$ has a well-defined restriction} to the diagonal for large $\Re \cv$, not to mention the analyticity aspects.

We can however formulate a natural sufficient condition in the present context. We start by stating a definition of the uniform wavefront set (which is equivalent to \cite[Def.~3.2]{Dang2020}). Below, $\zero$ is the zero section of $T^*M$ and $\bra z\ket=(1+\module{z}^2)^\12$.

\begin{definition}\label{defrrr}
The \emph{uniform operator wavefront set of order $s\in\rr$ and weight $\bra z\ket^{-\12}$} of $(P-z)^{-1}$ is the set 
\beq\label{eq:wfs}
\wfl{12}((P-z)^{-1})\subset (T^*M\setminus\zero)\times (T^*M\setminus\zero)
\eeq
defined as follows:  $((x_1;\xi_1),(x_2;\xi_2))$ does \emph{not} belong to \eqref{eq:wfs} if and only if for all $\varepsilon>0$ and all  properly supported $B_i\in \Psi^{0}(M)$ elliptic at $(x_i,\xi_i)$ and all $r\in \rr$, 
$$
\bra z\ket^{\12}B_1(P-z)^{-1} B_2^* \mbox{ is bounded in } B(H^{r}_\c(M), H_\loc^{r+s}(M)) \mbox{ along } z\in \gamma_\varepsilon.
$$
\end{definition}

The key property which we require of $\square_g$ is that it has a self-adjoint extension $P$, and that self-adjoint has \emph{Feynman wavefront set} in the sense of the uniform operator wavefront set. More precisely, we formalize this as follows.

\begin{definition}\label{deff} Suppose $P$ is a self-adjoint extension of $\square_g$ acting on $C_\c^\infty(M)\subset L^2(M)$. We say that $\square_g$ has \emph{Feynman resolvent} if for any $s\in\rr$,  the family  $\{(P-z)^{-1}\}_{z\in \gamma_\varepsilon}$ satisfies
\beq\label{feynwf2}
\bea
\wfl{12}  \big( ( P-z)^{-1} \big)\subset \{  ((x_1;\xi_1),(x_2;\xi_2))\, | \, (x_1;\xi_1) { \succeq} (x_2;\xi_2) \mbox{ or } x_1=x_2 \}.
\eea
\eeq
Above, $(x_1;\xi_1){ \succeq} (x_2;\xi_2)$ means that $(x_1;\xi_1)$ lies in the characteristic set of $P$  and $(x_1;\xi_1)$ can be joined from $(x_2;\xi_2)$ by a forward{\footnote{{ We remark that the opposite convention for the Feynman wavefront set is often used in the literature on Quantum Field Theory on curved spacetimes. Note also that the notion of {forward} vs.~backward bicharacteristic depends on the sign convention for $P$ (or rather its principal symbol).}}} bicharacteristic.
\end{definition}

This type of precise information on the microlocal structure of  $(P-z)^{-1}$ allows one to solve away the singular error term $r_N(z)$ which appears in \eqref{eq:PzHN} when computing $\left(P-z\right)  H_N(z)$. In consequence, the Hadamard parametrix approximates $(P-z)^{-1}$ in the following uniform sense.

\begin{proposition}[{\cite[Prop.~6.3]{Dang2020}}]\label{prop:fh} If $\square_g$ has Feynman resolvent then for every $s,\ell\in \mathbb{R}_{\geqslant 0}$,
there exists $N$ such that
\begin{eqnarray}\label{eq:toinsert}
(P-z)^{-1}= H_N(z) +E_{N,1}(z)+E_{N,2}(z)
\end{eqnarray}
where for $z$ along $\gamma_\varepsilon$, $\bra  z \ket^k \tilde\chi E_{N,1}(z)$ is bounded in $\cD'(M\times M)$ for some  $\tilde\chi\in \cf(M\times M)$ supported near $\Delta$ and all $k\in  \mathbb{Z}_{\geqslant 0}$, 
and $\bra  z \ket^\ell  E_{N,2}(z)$ is bounded in $B(H^{r}_\c(M), H_\loc^{r+s}(M))$ for all $r\in \rr$. 
\end{proposition}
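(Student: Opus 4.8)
This statement is \cite[Prop.~6.3]{Dang2020}; we indicate how one proves it. The plan is to upgrade the Hadamard identity \eqref{eq:PzHN} to an approximation of the resolvent by solving the remainder away with $(P-z)^{-1}$ itself, the Feynman hypothesis providing the microlocal and (crucially) $z$-uniform control of the resulting error. First I would write $\Phi_N(z)\defeq(Pu_N)\mathbf{F}_N(z)\chi+r_N(z)$, so that \eqref{eq:PzHN} reads $(P-z)H_N(z)=\module{g}^{-\frac12}\delta_\Delta+\Phi_N(z)$. Here $(Pu_N)\mathbf{F}_N(z)\chi$ is a smooth function times $\mathbf{F}_N(z)$, hence conormal at $\Delta$ with regularity growing linearly in $N$ and supported near $\Delta$, while $r_N(z)$ is smooth and supported in a punctured neighbourhood of $\Delta$, because the Fourier transform \eqref{eq:defFsz} is that of a function which, for $\Im z>0$, is smooth on $\rr^n$ and polynomially decaying (so $\mathbf{F}_k(z)$ is smooth off the diagonal), and $\Im z\geqslant c\,\varepsilon>0$ all along $\gamma_\varepsilon$. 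Comparing with $(P-z)K_{(P-z)^{-1}}=\module{g}^{-\frac12}\delta_\Delta$, where $K_{(P-z)^{-1}}$ is the Schwartz kernel of the resolvent, and applying $(P-z)^{-1}$ in the first variable gives the exact identity $K_{(P-z)^{-1}}-H_N(z)=-(P-z)^{-1}_{x_1}\Phi_N(z)$.

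Next I would invoke the uniform estimates of \cite{Dang2020}: along $\gamma_\varepsilon$, for every $s,\ell\geqslant0$ there is $N(s,\ell)$ so that $\bra z\ket^{\ell}\mathbf{F}_N(z)$ is bounded in $H^{s}$ near $\Delta$ and bounded in $C^\infty$ with every $\bra z\ket$-power away from $\Delta$; the same then holds for $\Phi_N(z)$, which for $N$ large is therefore a bounded family of kernels in $H^{s''(N)}(M\times M)$, smooth off $\Delta$, with weight $\bra z\ket^{-\ell''(N)}$, and $s''(N),\ell''(N)\to\infty$. On the other side, the Feynman hypothesis (Definition \ref{deff}), together with the elementary uniform bound $\|(P-z)^{-1}\|_{H^r\to H^{r+2}}=\pazocal{O}(1)$ along $\gamma_\varepsilon$ (valid since $\mathrm{dist}(z,\sigma(P))\gtrsim\bra z\ket$ there), controls $(P-z)^{-1}$ on all $H^r$-scales with weight $\bra z\ket^{-1/2}$, and with arbitrarily many gained derivatives microlocally \emph{away} from the Feynman region $\{(x_1;\xi_1)\succeq(x_2;\xi_2)\text{ or }x_1=x_2\}$.

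Now I would cut off with $\tilde\chi\in\cf(M\times M)$ equal to $1$ near $\Delta$ and set $E_{N,1}(z)\defeq-\tilde\chi\,(P-z)^{-1}_{x_1}\Phi_N(z)$ and $E_{N,2}(z)\defeq-(1-\tilde\chi)\,(P-z)^{-1}_{x_1}\Phi_N(z)$. Near $\Delta$, $E_{N,1}(z)$ is the resolvent applied to the conormal, rapidly $z$-decaying family $\Phi_N(z)$, hence stays bounded in $H^{s''(N)-C}(M\times M)\subset\cD'$ with weight $\bra z\ket^{-\ell''(N)+\frac12}$, so for $N$ large $\bra z\ket^{k}\tilde\chi E_{N,1}(z)$ is bounded in $\cD'$ for any prescribed $k$. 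Away from $\Delta$, $E_{N,2}(z)$ splits — according to whether one sits on the bicharacteristic flowout issuing from $\Delta$ — into a term which is smooth with every $\bra z\ket$-power (off the flowout, where $(P-z)^{-1}$ is uniformly smoothing; one also uses that $(P-z)^{-1}$ maps smooth compactly supported functions to smooth functions, its operator wavefront set excluding the zero covector in the second slot) and a Lagrangian term carried by the flowout, of arbitrarily negative order for $N$ large and with weight $\bra z\ket^{-\ell''(N)}$; each defines a family of operators bounded $H^r_\c\to H^{r+s}_\loc$ for all $r$ once $N$ is large compared with $s$ and $\ell$. Collecting the pieces gives \eqref{eq:toinsert}.

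The hard part is the uniformity in $z$ along the unbounded contour $\gamma_\varepsilon$: every estimate must carry the sharp $\bra z\ket$-weight, which forces a precise understanding of $(P-z)^{-1}$ in the phase-space region where $P-z$ is characteristic, $\module{\xi}^2\approx\Re z$ with $\module{z}$ large, where no derivatives are gained; recovering a negative power of $\bra z\ket$ there exploits that $\module{\Im z}\gtrsim\module{z}$ on the rays of $\gamma_\varepsilon$. This uniform microlocal control — the uniform operator wavefront set of Definition \ref{defrrr}, as opposed to an ordinary wavefront set — is the real content of \cite[Prop.~6.3]{Dang2020}; granted it, the decomposition above is bookkeeping.
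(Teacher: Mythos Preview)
The paper itself gives no proof of this proposition; it is quoted verbatim from \cite[Prop.~6.3]{Dang2020} and used as a black box. There is therefore nothing in the present paper to compare your sketch against. That said, your outline is the natural strategy and matches how the cited result is obtained: rewrite \eqref{eq:PzHN} as $(P-z)\big(H_N(z)-K_{(P-z)^{-1}}\big)=\Phi_N(z)$, compose with the resolvent, and control the outcome using the uniform Feynman wavefront bound together with the uniform-in-$z$ parametrix estimates of \cite{Dang2020}. You also correctly flag that the substance lies in the $\langle z\rangle$-weighted microlocal control along the unbounded contour, not in the algebra.

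One point in your decomposition is worth tightening. Your choice $E_{N,1}=-\tilde\chi(P-z)^{-1}\Phi_N$ inherits only the weight $\langle z\rangle^{-\ell''(N)+1/2}$, a \emph{fixed} power for fixed $N$, whereas the proposition demands boundedness of $\langle z\rangle^{k}\tilde\chi E_{N,1}(z)$ for \emph{all} $k\in\mathbb{Z}_{\geqslant 0}$. The clean way to get this is to split $\Phi_N$ rather than the composition: take $E_{N,1}(z)=-(P-z)^{-1}r_N(z)$ and $E_{N,2}(z)=-(P-z)^{-1}\big[(Pu_N)\mathbf{F}_N(z)\chi\big]$. The term $r_N(z)$ is supported on $\{\nabla\chi\neq 0\}$, hence at uniform positive distance from $\Delta$, and there each $\mathbf{F}_k(z)$ is $O_{C^\infty}(\langle z\rangle^{-\infty})$ along $\gamma_\varepsilon$ (this is among the uniform estimates in \cite{Dang2020} and uses $\Im z\gtrsim|z|$ on the rays). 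Hence $r_N(z)$ --- and with it $(P-z)^{-1}r_N(z)$ --- carries arbitrary $z$-decay \emph{independently of $N$}, which is precisely the ``all $k$'' clause; near $\Delta$ it need only be bounded in $\cD'$, as stated. The piece $E_{N,2}$ then carries the $N$-dependent Sobolev gain and the prescribed $\langle z\rangle^{-\ell}$ weight. The cutoff $\tilde\chi$ in the statement serves only to localise the assertion about $E_{N,1}$; it need not be the mechanism producing the splitting.
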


Then, by integrating  $(z-i\varepsilon)^{-\cv}$ times both sides of \eqref{eq:toinsert} along the contour $\gamma_\varepsilon$ we obtain for all $z$, 
\beq
(P-z)^{-\cv}= H_N^{(\cv)}(z) +R_N^{(\cv)}(z),
\eeq
where for each $s\in\rr$ and $p\in \nn$ there exists $N\in \nn$ such that $R_N^{(\cv)}(z)$ is holomorphic in $\{\Re \cv >-p\}$ with values in $C^s_{\rm loc}(\pazocal{U})$. Thus, the error term does not contribute to  neither analytical nor dynamical residues. By  combining { all the above information} with Proposition \ref{prop:fh} we obtain the following final result.

\begin{thm}\label{thm:dynres1}  Let $(M,g)$ be a  { time-oriented}  Lorentzian manifold of even dimension $n$, and suppose $\square_g$ has Feynman resolvent $(P-z)^{-1}$. Then for any Euler vector field $\euler$ there exists  an $\euler$-stable neighborhood $\pazocal{U}$ of $\Delta\subset M\times M$ such that  for all $\cv\in\cc$ and $\Im z > 0$ the Schwartz kernel $K_\cv\in \pazocal{D}^\prime(\pazocal{U})$ of $(P-z)^{-\cv}$
is tame log-polyhomogeneous w.r.t.~scaling with $\euler$. 
The dynamical residue of $(P-z)^{-\cv}$ is {independent of} $\euler$ and equals
\beq\label{eq:explicit}
\resdyn\big(\left(P-z\right)^{-\cv} \big)= i\sum_{p=0}^{\frac{n}{2}-\cv}
\frac{z^pu_{\frac{n}{2}-p-\cv}(x,x)}{p!(\cv-1)!2^{n-1}\pi^{\frac{n}{2}}}
\eeq
if $\alpha = 1,\dots, \n2$, and zero otherwise, where $(u_j(x,x))_j$ are the Hadamard coefficients. Furthermore, for $k=1,\dots,\frac{n}{2}$ and $\varepsilon>0$, the dynamical residue satisfies
\beq\label{eq:mainn}
\resdyn \left(P-i \varepsilon \right)^{-k} = 2 \res_{\cv =k}\zeta_{g,\varepsilon}(\cv),
\eeq
where $\zeta_{g,\varepsilon}(\cv)$ is the spectral zeta function density of $P-i \varepsilon$. 
\end{thm}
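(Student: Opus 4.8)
The plan is to assemble the pieces already established in Sections \ref{section4} and \ref{ss:hc}, and to reduce the statement about the global complex powers $(P-z)^{-\cv}$ to the corresponding statement about the Feynman Hadamard parametrix $H_N^{(\cv)}(z)$. First I would invoke Proposition \ref{prop:fh}: since $\square_g$ has Feynman resolvent, for each prescribed Hölder regularity $s$ and each half-plane $\{\Re\cv>-p\}$ one can choose $N$ large enough so that
$$
(P-z)^{-\cv}=H_N^{(\cv)}(z)+R_N^{(\cv)}(z),
$$
with $R_N^{(\cv)}(z)$ holomorphic in $\cv$ on that half-plane with values in $C^s_{\loc}(\pazocal{U})$. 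A function-valued remainder of high Hölder regularity is tame log-polyhomogeneous with vanishing resonances down to order $s$; in particular $\Pi_0$ annihilates it and $\euler\Pi_0$ of it vanishes, so $R_N^{(\cv)}(z)$ contributes neither to the dynamical residue nor (being holomorphic) to the analytic residue of the diagonal restriction. Hence both sides of \eqref{eq:mainn} — and of \eqref{eq:explicit} — are unchanged if $(P-z)^{-\cv}$ is replaced by $H_N^{(\cv)}(z)$ for $N$ sufficiently large.

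Next I would transfer the structural properties. By the Proposition at the end of \sec{ss:hc}, $H_N^{(\cv)}(z)$ is tame log-polyhomogeneous with respect to any Euler vector field $\euler$ on a suitable $\euler$-stable neighborhood $\pazocal{U}$; combined with the decomposition above, this gives that the Schwartz kernel $K_\cv$ of $(P-z)^{-\cv}$ is tame log-polyhomogeneous w.r.t.\ scaling with $\euler$, so in particular $\Pi_0(K_\cv)$, and then the diagonal pull-back $\iota_\Delta^*\big(\euler\Pi_0(K_\cv)\big)$, are well-defined (here one uses that the wavefront set $\Gamma$ restricted to $\Delta$ is contained in $N^*\Delta$, so that the diagonal restriction of $\euler\Pi_0$ applied to a conormal distribution makes sense). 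Moreover $\resdyn\big((P-z)^{-\cv}\big)=\resdyn\big(H_N^{(\cv)}(z)\big)$, and the latter is independent of $\euler$ and given explicitly by \eqref{eq:resdynHa}; this is precisely \eqref{eq:explicit}, and it vanishes when $\cv\notin\{1,\dots,\n2\}$ because then the constraint $\cv+k+p=\tfrac n2$ with $k,p\ge0$ (and $k\ge1$, as $u_0$ does not enter the relevant resonance after applying $\euler\Pi_0$) has no solutions — or more directly because $\Pi_0=0$ in that range, just as in Proposition \ref{p:pseudopoly}.

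For the final identity \eqref{eq:mainn}, I would set $z=i\varepsilon$ and compare \eqref{eq:resdynHa} with the complex-analytic residue. On one hand, the remark and the decomposition above give
$$
\res_{\cv=k}\zeta_{g,\varepsilon}(\cv)=\res_{\alpha'=k}\big(\iota^*_\Delta H_{N}^{(\alpha')}(i\varepsilon)\big),
$$
for $N$ large enough, since $\iota^*_\Delta R_N^{(\alpha')}(i\varepsilon)$ is holomorphic near $\alpha'=k$. On the other hand, \eqref{eq:resdynHa2} (taken from \cite[\S8.3.1]{Dang2020}) evaluates this analytic residue, and comparison with \eqref{eq:resdynHa} yields exactly the factor $2$, i.e.\ the relation \eqref{eq:resres}
$$
\resdyn\big(H_N^{(\cv)}(z)\big)=2\,\res_{\alpha'=\alpha}\big(\iota^*_\Delta H_N^{(\alpha')}(z)\big).
$$
Combining these equalities gives \eqref{eq:mainn}.

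The main obstacle, which is really the content already carried by the earlier sections rather than something new here, is the bookkeeping of remainders: one must be sure that the error $R_N^{(\cv)}(z)$ can simultaneously be made holomorphic in $\cv$ on a fixed half-plane \emph{and} of prescribed Hölder regularity by a single choice of $N$, uniformly enough that neither the log-polyhomogeneous expansion of $K_\cv$ nor its $\Pi_0$-part is affected below the relevant order — this is exactly where the uniform-in-$z$ estimates of Proposition \ref{prop:fh} (and the integration of $(z-i\varepsilon)^{-\cv}$ along $\gamma_\varepsilon$) are essential. A secondary, more technical point is justifying that $\euler\Pi_0$ commutes with the diagonal pull-back well enough that the two computations \eqref{eq:resdynHa} and \eqref{eq:resdynHa2} are genuinely comparing the same diagonal density; this follows from the conormality of the relevant $T_{n+\cv-k}$ terms and Lemma \ref{l:vanishlemma2}, which ensures that only the single homogeneity-$(-n)$ piece survives and that its diagonal value is the integral over $\mathbb{S}^{n-1}$ computed in Lemma \ref{lem:Stokes}.
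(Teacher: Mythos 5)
Your proposal is correct and follows essentially the same route as the paper: reduce to the parametrix via Proposition \ref{prop:fh} and the contour integration of $(z-i\varepsilon)^{-\cv}$ along $\gamma_\varepsilon$, discard the Hölder-regular, holomorphic-in-$\cv$ remainder $R_N^{(\cv)}(z)$ from both residues, and then read off \eqref{eq:explicit} and \eqref{eq:mainn} from \eqref{eq:resdynHa}, \eqref{eq:resdynHa2} and \eqref{eq:resres}. The paper's own argument is exactly this assembly of the results of \secs{ss:polhom}{ss:hc}, so no further comparison is needed.
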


In particular, using the fact that $u_1(x,x)=\frac{-R_g(x)}{6}$ (see e.g.~\cite[\S8.6]{Dang2020}), setting  $k=\frac{n}{2}-1$ and taking the limit $\varepsilon\to 0^+$, we find the  relation \eqref{eq:main2} between the dynamical residue and the Einstein--Hilbert action stated  in the introduction.

\appendix

 \section{Lorentzian canonical trace density}\label{app}\init 
 
 \subsection{Summary} A classical result due to Kontsevich--Vishik \cite{Kontsevich1995} says that if $A\geqslant 0$ is an elliptic operator on a compact manifold $M$  and $Q$ is (for instance) a differential operator, then the trace of $Q A^{-\cv}$ exists for large $\Re \cv$ and analytically continues to $\cc\setminus \zz$. In greater generality, the same is true  for the trace density, defined by on-diagonal restriction of the Schwartz kernel. The analytic continuation is called the \emph{Kontsevich--Vishik canonical trace density} and it plays a fundamental rôle in the definition of \emph{weighted traces} or \emph{renormalized traces}, see e.g.~\cite{paycha} and references therein.
 
 A very natural question\footnote{This was kindly suggested to us by an anonymous referee, whom we would like to thank heartily.} is whether elliptic complex powers $A^{-\cv}$ can be replaced by Lorentzian complex powers $(P-i\varepsilon)^{-\cv}$ in the setting of the wave operator $P=\square_g$ introduced  in \sec{section4}. In this appendix we provide an affirmative answer.

\subsection{Lorentzian canonical trace}   We prove the following result, assuming for the sake of simplicity that $Q$ is a differential operator. We leave for further studies the  case when $Q$ is a properly supported pseudodifferential with polyhomogeneous symbol of integer order.

\begin{thm}
 Let $(M,g)$ be a time-oriented Lorentzian manifold of even dimension $n$, and suppose $\square_g$ has Feynman resolvent $(P-z)^{-1}$. For any $\cv\in\cc\setminus \mathbb{Z}$ and $\Im z > 0$ and for any differential operator $Q$ of degree $q$, denote by $K_\cv$ the Schwartz kernel of $Q(P-z)^{-\cv}$. Then the on-diagonal restriction
\begin{equation}
\iota_\Delta^*\left(K_\cv \right)\in C^\infty(M)
\end{equation}  
is well-defined for $\Re \cv$ large enough and analytically continues to $\cv\in \mathbb{C}\setminus \mathbb{Z}$.
\end{thm}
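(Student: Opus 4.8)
The plan is to reduce the statement to the analysis of the Feynman Hadamard parametrix for complex powers $H_N^{(\cv)}(z)$ from \sec{ss:hc}, in the same spirit as the proof of Theorem \ref{thm:dynres1}, and then to evaluate the on-diagonal restriction of $QH_N^{(\cv)}(z)$ by an explicit finite computation.

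First I would use the decomposition $(P-z)^{-\cv}=H_N^{(\cv)}(z)+R_N^{(\cv)}(z)$ from \sec{ss:lg}, which relies on the Feynman resolvent hypothesis through Proposition \ref{prop:fh}: for every $s\in\rr$ and $p\in\nn$ there is an $N$ such that $R_N^{(\cv)}(z)$ is holomorphic in $\{\Re\cv>-p\}$ with values in $C^{s}_{\rm loc}(\pazocal{U})$. Composing with the differential operator $Q$ of degree $q$, the kernel of $QR_N^{(\cv)}(z)$ lies in $C^{s-q}_{\rm loc}(\pazocal{U})$, so for $N$ (hence $s$) large enough its restriction to $\Delta$ is well-defined and holomorphic in $\cv$ on $\{\Re\cv>-p\}$ with values in $C^{s-q}(M)$; since $p$ is arbitrary, the remainder contributes no singularities. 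Together with the fact that, for $\Re\cv$ large, $(P-z)^{-\cv}$ already has a kernel of class $C^{k(\cv)}$ near $\Delta$ with $k(\cv)\to\infty$ (as in the very definition of $\zeta_{g,\varepsilon}$), this shows that $\iota_\Delta^*(K_\cv)$ is well-defined for $\Re\cv$ large and reduces the meromorphic continuation to that of $\iota_\Delta^*\big(QH_N^{(\cv)}(z)\big)$.

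Next I would exploit the oscillatory integral representation of \sec{ss:hc}. Since $\Im z>0$ all denominators are non-vanishing, so the identity there written with "$\sim$'' is in fact exact:
\[
H_N^{(\cv)}(z,x,h)=\sum_{m=0}^N u_m(x,h)\,\frac{\cv(\cv+1)\cdots(\cv+m-1)}{(2\pi)^n}\int_{\rr^n}e^{i\langle\xi,h\rangle}\big(\vert \t M(x,h)^{-1}\xi\vert_\eta^2-z\big)^{-m-\cv}\module{M(x,h)}^{-1}\,d^n\xi
\]
in local coordinates $(x,h)$ near $\Delta$ with $M(x,0)=\id$, and for $\Re\cv$ large the integrand and all of its $(x,h)$-derivatives are $O(\module{\xi}^{-2\Re\cv})$, so one may differentiate under the integral. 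Applying $Q=\sum_{\vert\beta\vert\le q}a_\beta(x)\partial_x^\beta$ in the first kernel variable, expanding by the Leibniz and chain rules — each derivative either hits the phase, producing a factor linear in $\xi$, or hits $u_m$ or the $M$-dependent factors, lowering the exponent $-m-\cv$ by one and producing a polynomial factor in $\xi$ of degree $\le 2$ — and then setting $h=0$, which kills every explicit $h^\beta$ with $\vert\beta\vert>0$ and turns $\t M(x,h)^{-1}$ into $\id$, I obtain a finite sum
\[
\iota_\Delta^*\big(QH_N^{(\cv)}(z)\big)(x)=\sum_{j}c_j(x,z)\,G_j(\cv,z),\qquad G_j(\cv,z):=\frac{1}{(2\pi)^n}\int_{\rr^n}P_j(\xi)\big(\vert\xi\vert_\eta^2-z\big)^{-\cv-m_j}\,d^n\xi,
\]
where $m_j\in\nn$ is bounded in terms of $N$ and $q$, $P_j$ is a polynomial of degree $\le 2q$, and the coefficients $c_j(\cdot,z)\in C^\infty(M)$ are built from $a_\beta$, the coincidence-limit Hadamard coefficients $u_m(x,x)$ and coincidence limits of finitely many of their derivatives and of $M$; the integrals converge for $\Re\cv$ large.

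Finally I would show that each $G_j(\cv,z)$ extends to a meromorphic function of $\cv$ with poles only in $\zz$. Writing $P_j$ as a sum of monomials $\xi^\beta$ and evaluating the $\xi$-integral iteratively — first the time variable $\xi_0$, then polar coordinates in the spatial variables, as in the Wick-rotation computations of \cite[\S8]{Dang2020} — the monomial integral vanishes unless every component of $\beta$ is even, in which case it equals
\[
C_{n,\beta}\,\frac{\Gamma\!\big(\cv+m_j-\tfrac n2-\tfrac{\vert\beta\vert}{2}\big)}{\Gamma(\cv+m_j)}\,(-z)^{\frac n2+\frac{\vert\beta\vert}{2}-\cv-m_j}
\]
for an elementary constant $C_{n,\beta}$. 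Because $n$ is even and $\vert\beta\vert$ is even, the poles of the numerator and the zeros of $1/\Gamma(\cv+m_j)$ occur only at integer values of $\cv$, so $G_j(\cv,z)$, hence $\iota_\Delta^*\big(QH_N^{(\cv)}(z)\big)$ and $\iota_\Delta^*(K_\cv)$, continues holomorphically to $\cc\setminus\zz$ with values in $C^\infty(M)$. I expect the main obstacle to be the third step together with the bookkeeping of the second: making the oscillatory-integral manipulations rigorous up to the boundary $h=0$, and verifying that after applying $Q$ and restricting one genuinely obtains only integrals of the above homogeneous type with the stated control on $\deg P_j$ and on $m_j$ — the same care with non-standard, distribution-valued ``symbols'' that is already required in \sec{section4}.
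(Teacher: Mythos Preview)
Your strategy is sound and does lead to the result, but one step is wrong as written, and the route differs from the paper's in an interesting way.

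The gap is your claim that for large $\Re\cv$ ``the integrand and all of its $(x,h)$-derivatives are $O(|\xi|^{-2\Re\cv})$''. In Lorentzian signature $|\cdot|_\eta^2$ is indefinite and vanishes on the light cone, so $\big(|M^{-1}\xi|_\eta^2-z\big)^{-m-\cv}$ equals $(-z)^{-m-\cv}$ there and does \emph{not} decay; in fact $\int_{\mathbb{R}^n}\big|(Q(\xi)-z)^{-s}\big|\,d^n\xi=\infty$ for every $s$ when $n\geqslant 3$, so the integral is never a Lebesgue integral and you cannot invoke dominated convergence to differentiate under it or to set $h=0$. You correctly anticipate at the end that making the oscillatory-integral manipulations rigorous is the crux; the clean fix is to Wick-rotate \emph{first} (for $\Im z>0$ the $\xi_0$-contour can be tilted so that the quadratic form becomes definite, after which your decay estimate, the differentiation under the integral, and the Gamma-function evaluation are all honest), or alternatively to stay in the tempered-distribution setting and invoke the known $C^k$ regularity of $\mathbf{F}_{m+\cv-1}(z,\cdot)$ for large $\Re\cv$. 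Once this is in place your formula for $G_j(\cv,z)$ is correct and the poles land in $\mathbb{Z}$ exactly as you say.

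The paper takes a different route. Rather than keep $z$ in the denominator and evaluate via Wick rotation, it expands $(Q(M^{-1}\xi)-z)^{-k-\cv}$ in powers of $z$, producing the homogeneous distributions $(Q(\xi)-i0)^{-k-\cv-p}$ (with a frequency cutoff $1-\psi$ near $\xi=0$), and then Taylor-expands in $h$ via Fa\`a di Bruno exactly as in \sec{ss:polhom}. After differentiating by $Q$ and restricting to the diagonal one is left with integrals $\int(1-\psi)\,\xi^\delta(Q(\xi)-i0)^{-s}\,d^n\xi$, which are continued not by an explicit formula but by a dyadic Littlewood--Paley argument: homogeneity turns the sum over shells into the geometric factor $(1-2^{\,n+|\delta|-2s})^{-1}$ times an entire pairing, from which the pole set is read off. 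Your approach trades the two expansions and the dyadic step for one closed-form integral; it is shorter once the contour rotation is justified, while the paper's stays within the distribution-valued-symbol framework of \sec{section4} and never leaves the real $\xi$-axis.
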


\begin{proof}
We start from the decomposition
$(P-z)^{-\cv}=H_N^{(\cv)}(z)+R_N^{(\cv)}(z)$ for $N$ large enough so that the remainder term $R_N^{(\cv)}(z)$ belongs to  $\pazocal{C}^{s}_{\loc}$ for $s>q$. Then,  $QR_N^{(\cv)}(z)$ has a continuous Schwartz kernel which has a well-defined restriction to the diagonal.

We use the asymptotic expansion 
$$
H_N^{(\cv)}(z,.) = 
 \sum_{\varm=0}^N  u_\varm(.)\frac{(-1)^\varm\Gamma(-\cv+1)}{\Gamma(-\cv-\varm+1)\Gamma(\cv+\varm)} \mathbf{F}_{\varm+\cv-1}(z,.).
 $$
We study $QH_N^{(\cv)}(z,.)$, which can be expressed as a finite sum of smooth functions (these have a well-defined on-diagonal restriction) 
times derivatives of distributions of the form
$\partial_x^{\beta_1}\partial_h^{\beta_2} \mathbf{F}_{\varm+\cv-1}(z,.)$ where $\vert\beta_1\vert+\vert\beta_2\vert\leqslant q$.
Without loss of generality we can reduce the problem to the case when
$Q=\partial_x^{\beta_1}\partial_h^{\beta_2}$ in local coordinates $(x,h)$.

We use the notation from  Lemma~\ref{l:kuranishi}. 
We start again from the oscillatory integral representation
$$\mathbf{F}_\cv(z,x,h)=\frac{\Gamma(\cv+1)}{(2\pi)^n}\int_{\mathbb{R}^n} e^{i\left\langle \xi,h\right\rangle} \left(Q((^tM(x,h))^{-1}\xi  )-z \right)^{-\cv-1}\vert M(x,h) \vert^{-1}d^n\xi . $$
Let $\psi\in C^\infty_\c(\mathbb{R}^n)$ with $\psi=1$ near $0$. Then we use $\psi$ as a frequency cutoff.
We  expand the integrand in both variables $z$ and then smoothly in the parameters $(x,h)$. Namely,
$$ 
\bea
&\left(Q((^tM(x,h))^{-1}\xi  )-z \right)^{-\cv-1}(1-\psi)(\xi)\\
&=(1-\psi)(\xi)\sum_{p=0}^N (-1)^pz^p \frac{\Gamma(-\cv)}{\Gamma(p+1)\Gamma(-\cv-p)} \left(Q((^tM(x,h))^{-1}\xi  )-i0 \right)^{-\cv-1-p} + \text{remainder}
\eea
$$
where the omitted remainder terms  are weakly homogeneous of 
degree $\geqslant -\Re(\cv)-1-N$ in $\xi$, hence by inverse Fourier transform
they  have high H\"older regularity if $N$ is chosen large enough so that the inverse Fourier can be restricted to the diagonal. 
Then the second step is to Taylor expand the distribution 
$\left(Q((^tM(x,h))^{-1}\xi  )-i0 \right)^{-\cv-1-p}(1-\psi)(\xi)$ in the variable $h$. 
For all $\cv$,
$$
\bea
 &(Q(\t M(x, h)^{-1}\xi)-i0)^{-\cv}(1-\psi)(\xi) \\ &=(1-\psi)(\xi)\sum_{\ell, \vert \beta_1\vert+\dots+\vert\beta_\ell\vert\leqslant N} h^\beta Q_\beta( x,h;\xi)
|_{(x,0)}+I_N(z,x,h;\xi).
\eea
$$
where we denoted
$$
\bea
Q_\beta( x,h;\xi) & = \frac{(-\cv)\dots(-\cv-\ell-1) \left(\partial_h^{\beta_1}Q(\t M^{-1}(x,h)\xi)\right)\dots
\left(\partial_h^{\beta_\ell}Q(\t M^{-1}(x,h)\xi)\right) }{\beta_1!\dots\beta_\ell!\ell!} 
 \fantom  \times (Q(\xi)-i0)^{-\cv-\ell}.
\eea
$$

Note that in the present situation we do not need to take the finite part since $\Re\cv$ is large enough and we have the $1-\psi$ cutoff, which vanishes near $\xi=0$.
Each  $h^\beta Q_\beta( x,h;\xi) |_{(x,0)}$
term is \emph{polynomial} in $h$ and is a distribution in $\xi$, homogeneous of degree $-2\cv$, of order $\plancher{\Re\cv}+\ell+1$.
The integral remainder
$I_N(z,x,h;\xi)$ is continuous in $(x,h)$ with values in distributions in $\xi$, homogeneous of degree $-2\cv$,
of order $\plancher{\Re\cv}+N+2$ uniformly in $(x,h)$.

From now on the analysis in \secs{ss:polhom}{ss:rcc} can be applied with merely minor changes. For the sake of brevity we write `$\sim$' to denote identities which hold true modulo remainders as those discussed in \secs{ss:polhom}{ss:rcc}, which are H\"older regular enough to be restricted on the diagonal. In particular, we can write
$$
H_N^{(\cv)}(z) \sim \sum_{k=0}^N u_k \frac{\cv\dots(\cv+k-1)}{(2\pi)^n} \int_{\mathbb{R}^n} (1-\psi) e^{i\left\langle\xi,h \right\rangle}
(Q(\t M^{-1}(x,h)\xi)-i0)^{-k-\cv} \module{M(x,h)}^{-1} d^n\xi.
$$
Expanding in $z$ yields
$$ 
\bea
H_N^{(\cv)}(z)&\sim \sum_{k=0}^N\sum_{p=0}^\infty u_k  (-1)^pz^p  \begin{pmatrix}
-k-\cv
\\
p
\end{pmatrix}   \frac{\cv\dots(\cv+k-1)}{(2\pi)^n} \fantom \times  \int_{\mathbb{R}^n}(1-\psi) e^{i\left\langle\xi,h \right\rangle}
(Q(\t M^{-1}(x,h)\xi)-i0)^{-k-\cv-p} \module{M(x,h)}^{-1} d^n\xi
\\ & \sim  \sum_{  k=0}^\infty \sum_{p=0}^\infty z^p  u_k \frac{\cv\dots(\cv+k+p-1)}{p!(2\pi)^n} \fantom\times\int_{\mathbb{R}^n} (1-\psi)e^{i\left\langle\xi,h \right\rangle} 
(Q(\t M^{-1}(x,h)\xi)-i0)^{-k-\cv-p} \module{M(x,h)}^{-1} d^n\xi.
\\ & \sim  \sum_{  k=0}^\infty \sum_{p=0}^\infty z^p \module{M(x,h)}^{-1} u_k \frac{\cv\dots(\cv+k+p-1)h^\beta}{p!(2\pi)^n\beta!} \fantom\times\int_{\mathbb{R}^n}(1-\psi) e^{i\left\langle\xi,h \right\rangle} 
\partial_h^\beta(Q(\t M^{-1}(x,h)\xi)-i0)^{-k-\cv-p}|_{(x,0)}  d^n\xi.
\eea
$$
Above, the omitted remainders are H\"older regular enough to be restricted on the diagonal. Indeed, we can truncate the above series to some finite sum if we expand in $k+p$ large enough since $\partial_h^\beta(Q(\t M^{-1}(x,h)\xi)-i0)^{-k-\cv-p}|_{(x,0)} $ is a tempered distribution homogeneous of degree $-k-\cv-p $ in $\xi$, and therefore the inverse Fourier transform is sufficiently H\"older regular in $(x,h)$ and can be restricted to the diagonal. 
Note that we do not need to use finite parts anymore since all distributions are homogeneous on the support of $1-\psi$, which avoids $\xi=0$.
%
%
When we differentiate each term
$$z^p \module{M(x,h)}^{-1} u_k \frac{\cv\dots(\cv+k+p-1)h^\beta}{p!(2\pi)^n\beta!} \times\int_{\mathbb{R}^n} e^{i\left\langle\xi,h \right\rangle} 
\partial_h^\beta(Q(\t M^{-1}(x,h)\xi)-i0)^{-k-\cv-p}|_{(x,0)}  d^n\xi$$
on the r.h.s.~of the previous equality with the operator $\partial_x^{\beta_1}\partial_h^{\beta_2}$,
we get some finite combinations of terms of the form $$
\int_{\mathbb{R}^n}  \xi^\delta
\partial_h^\beta(Q(\t M^{-1}(x,h)\xi)-i0)^{-k-\cv-p}|_{(x,0)}  d^n\xi
\times (\text{smooth function}\in C^\infty(M\times M)).
$$
Each term $\partial_h^\beta(Q(\t M^{-1}(x,h)\xi)-i0)^{-k-\cv-p}|_{(x,0)} $ reads as the sum:
$$ 
\bea
  &\beta! \sum_{\ell,\beta_1+\dots+\beta_\ell=\beta} \frac{(-k-\cv-p)! \left(\partial_h^{\beta_1}Q(\t M^{-1}(x,h)\xi)\right)\dots
\left(\partial_h^{\beta_\ell}Q(\t M^{-1}(x,h)\xi)\right) }{(-k-\cv-p-\ell-1)!\beta_1!\dots\beta_\ell!\ell!} |_{h=0}
 \fantom  \times (Q(\xi)-i0)^{-k-\cv-p-\ell}=\sum_\ell \text{smooth function }\times (Q(\xi)-i0)^{-k-\cv-p-\ell}
\eea
$$
where the term is homogeneous of degree $-k-\cv-p$ in the $\xi$ variable. 
 Then we are reduced to prove that 
the general term
$$\int_{\mathbb{R}^n}(1-\psi)\xi^\delta (Q(\xi)-i0)^{-k-\cv-p-\ell}d^n\xi , \quad  \vert\delta\vert\leqslant q+2\ell, $$
for $\delta$ a multi-index,
is well-defined for $\Re \cv$ large enough and
has an {analytic continuation} to $\cv\in \mathbb{C}\setminus \mathbb{Z}$. The idea is again to use a Littlewood--Paley decomposition in momentum
$$1=\psi+\sum_{j=1}^\infty \beta(2^{-j}.) $$ and the homogeneity of the distribution:
$$
\bea
&\int_{\mathbb{R}^n}(1-\psi)\xi^\delta (Q(\xi)-i0)^{-\cv-k-p-\ell} d^n\xi \\
&=\sum_{j=1}^\infty \int_{\mathbb{R}^n}\xi^\delta \beta(2^{-j}\xi) (Q(\xi)-i0)^{-\cv-k-p-\ell} d^n\xi\\
&=\sum_{j=1}^\infty 2^{j(n+\vert\delta\vert)}\int_{\mathbb{R}^n}\xi^\delta \beta(\xi) (Q(2^j\xi)-i0)^{-\cv-k-p-\ell} d^n\xi\\
&=2(1-2^{n+\vert\delta\vert-2(\cv+k+p+\ell)})^{-1}\int_{\mathbb{R}^n}\xi^\delta \beta(\xi) (Q(\xi)-i0)^{-\cv-k-p-\ell} d^n\xi,
\eea
$$
where the last term admits a unique holomorphic continuation to $\cv\in \mathbb{C}\setminus \{ \mathbb{Z}\cap \,]-\infty, \frac{n}{2} +q] \}$, where we used the inequality $\vert\delta\vert\leqslant q+2\ell$.
This concludes the proof.
\end{proof}

\bibliographystyle{abbrv}
\bibliography{complexpowers}

\end{document}